\newtheorem{theorem}{Theorem}[section]
\newtheorem{lemma}[theorem]{Lemma}
\newtheorem{proposition}[theorem]{Proposition}
\newtheorem{corollary}[theorem]{Corollary}
\newtheorem{conjecture}[theorem]{Conjecture}
\theoremstyle{definition}
\newtheorem{example}[theorem]{Example}
\newtheorem{remark}[theorem]{Remark}
\newcommand{\becircled}{\mathaccent "7017}
\newcommand{\excise}[1]{}
\newcommand{\Proj}{\operatorname{Proj}}
\renewcommand{\dim}{\operatorname{dim}}
\newcommand{\rk}{\operatorname{rk}}
\newcommand{\crk}{\operatorname{crk}}
\renewcommand{\and}{\qquad\text{and}\qquad}
\newcommand{\Z}{\mathbb{Z}}
\newcommand{\N}{\mathbb{N}}
\newcommand{\C}{\mathbb{C}}
\newcommand{\Fq}{\mathbb{F}_q}
\newcommand{\Fqb}{\overline{\mathbb{F}}_q}
\newcommand{\Fqs}{\mathbb{F}_{q^s}}
\newcommand{\IC}{\operatorname{IC}}
\renewcommand{\H}{\operatorname{H}}
\newcommand{\IH}{\operatorname{IH}}
\newcommand{\IHX}{\IH^*(X; \Ql)}
\newcommand{\IHxX}{\IH^*_x(X; \Ql)}
\newcommand{\IHiX}{\IH^i(X; \Ql)}
\newcommand{\Fr}{\operatorname{Fr}}
\newcommand{\cI}{\mathcal{I}}
\newcommand{\Ql}{\overline{\mathbb{Q}}_\ell}
\renewcommand{\a}{\alpha}
\renewcommand{\b}{\beta}
\newcommand{\cA}{\mathcal{A}}
\newcommand{\la}{\lambda}
\newcommand{\IF}{\cI\smallsetminus F}
\newcommand{\bc}{\bar{c}}
\newcommand{\ep}{\varepsilon}
\newcommand{\nicktodo}{\todo[inline,color=green!20]}
\begin{document}
\spacing{1.2}
\noindent{\Large\bf The Kazhdan-Lusztig polynomial of a matroid}\\

\noindent{\bf Ben Elias}\\
Department of Mathematics, University of Oregon,
Eugene, OR 97403\\
belias@uoregon.edu\\

\noindent{\bf Nicholas Proudfoot}\footnote{Supported by NSF grant DMS-0950383.}\\
Department of Mathematics, University of Oregon,
Eugene, OR 97403\\
njp@uoregon.edu\\

\noindent{\bf Max Wakefield}\footnote{Supported by the Simons Foundation and the Office of Naval Research.}\\
Department of Mathematics, United States Naval Academy, Annapolis, MD 21402\\
wakefiel@usna.edu\\
{\small
\begin{quote}
\noindent {\em Abstract.}
We associate to every matroid $M$ a polynomial with integer coefficients, 
which we call the Kazhdan-Lusztig polynomial of $M$,
in analogy with Kazhdan-Lusztig polynomials in representation theory.
We conjecture that the coefficients are always non-negative, and we prove
this conjecture for representable matroids by interpreting our
polynomials as intersection cohomology Poincar\'e polynomials.  We also 
introduce a $q$-deformation of the M\"obius algebra of $M$, and use our polynomials
to define a special basis for this deformation, analogous to the canonical basis of the Hecke algebra.
We conjecture that the structure coefficients for multiplication in this special basis are non-negative,
and we verify this conjecture in numerous examples.
\end{quote} }

\section{Introduction}
Our goal is to develop Kazhdan-Lusztig theory for matroids in analogy with
the well-known theory for Coxeter groups.  In order to make this analogy clear, we begin
by summarizing the most relevant features of the usual theory.

Given a Coxeter group $W$ along with a pair of elements $y,w\in W$, Kazhdan and Lusztig \cite{KL79}
associated a polynomial $P_{x,y}(t)\in\Z[t]$, which is non-zero if and only if $x\leq y$ in the Bruhat order.
This polynomial has a number of different interpretations:

\begin{itemize}
\item {\bf Combinatorics:}  There is a purely combinatorial recursive definition of $P_{x,y}(t)$ 
in terms of more elementary polynomials, called $R$-polynomials.  
See \cite[Proposition 2]{Lusztig-Left}, as well as \cite[\S 5.5]{BB-Coxeter}
for a more recent account.
\item {\bf Geometry:}  If $W$ is a finite Weyl group, then $P_{x,y}(t)$ may be interpreted as the Poincar\'e
polynomial of a stalk of the intersection cohomology sheaf on a Schubert variety 
in the associated flag variety \cite{KL80}.
The Schubert variety is determined by $y$, and the point at which one takes the stalk is determined by $x$.
This proves that $P_{x,y}(t)$ has non-negative coefficients when $W$ is a finite Weyl group.
The non-negativity of the coefficients of $P_{x,y}(t)$ for arbitrary Coxeter groups was 
conjectured in \cite{KL79}, but was
only recently proved by Williamson and the first author \cite[1.2(1)]{EW14}.
\item {\bf Algebra:}  The polynomials $P_{x,y}(t)$ are the entries of the matrix relating the Kazhdan-Lusztig basis
(or canonical basis) 
to the standard basis of the Hecke algebra of $W$, a $q$-deformation of the group algebra $\C[W]$.
When $W$ is a finite Weyl group, Kazhdan and Lusztig showed that the structure coefficients for multiplication
in the Kazhdan-Lusztig basis are polynomials with non-negative coefficients.  For general Coxeter groups, this
is proved in \cite[1.2(2)]{EW14}.
\end{itemize}

In our analogy, the Coxeter group $W$ is replaced by a matroid $M$, and the elements $x,y\in W$
are replaced by flats $F$ and $G$ of $M$.  We only define a single polynomial
$P_M(t)$ for each matroid, but one may associate to a pair $F\leq G$ the polynomial $P_{M^F_G}(t)$,
where $M^F_G$ is the matroid whose lattice of flats is isomorphic to the interval\footnote{We 
note that this shortcut has no analogue in ordinary Kazhdan-Lusztig theory, since the interval $[x,y]$
is not in general isomorphic to the Bruhat poset of some other Coxeter group.  Furthermore, it is still
an open question whether or not $P_{x,y}(t)$ is determined by the isomorphism type of the interval $[x,y]$.} $[F,G]$.
The role of the $R$-polynomial is played by the characteristic polynomial of the matroid.
The analogue of being a finite Weyl group is being a representable matroid;
that is, the matroid $M_\cA$ associated to a collection $\cA$ of vectors in a vector space.  The analogue of
a Schubert variety is the reciprocal plane $X_\cA$, also known as the spectrum of the Orlik-Terao
algebra of $\cA$.  The analogue of the group algebra $\C[W]$ is the M\"obius algebra $E(M)$;
we introduce a $q$-defomation $E_q(M)$ of this algebra which plays the role of the Hecke algebra.  

All of these analogies
may be summarized as follows:

\begin{itemize}
\item {\bf Combinatorics:}  We give a recursive definition of the polynomial $P_M(t)$
in terms of the characteristic polynomial of a matroid (Theorem \ref{KL-exists}),
and we conjecture that the coefficients are non-negative (Conjecture \ref{positivity}).
\item {\bf Geometry:}  If $M$ is representable over a finite field, we show that $P_M(t)$ is equal to the 
$\ell$-adic \'etale intersection cohomology Poincar\'e polynomial of the reciprocal plane\footnote{The
reciprocal plane is a cone, so we could equivalently say that it is the Poincar\'e polynomial
of the stalk of the intersection cohomology sheaf at the cone point.}  (Theorem \ref{comb=geom}).
Any matroid that is representable over some field is representable over a finite field, thus
we obtain a proof of Conjecture \ref{positivity} for all representable matroids (Corollary \ref{nonneg-cor}).
\item {\bf Algebra:}  We use the polynomials $P_{M^F_G}(t)$ to define the Kazhdan-Lusztig basis of the $q$-deformed
M\"obius algebra $E_q(M)$.  We conjecture that the structure constants for multiplication in this basis
are polynomials in $q$ with non-negative coefficients (Conjecture \ref{structure coefs}), and we verify this
conjecture in a number of cases.
\end{itemize}

\begin{remark} Despite these parallels, the behavior of the polynomials for matroids differs drastically 
from the behavior of ordinary Kazhdan-Lusztig polynomials for Coxeter groups. 
In particular, one does not recover the classical Kazhdan-Lusztig polynomials for the Coxeter group $S_n$
from the braid matroid.  Polo \cite{Polo} has shown that any polynomial with non-negative coefficients
and constant term 1 appears as a Kazhdan-Lusztig polynomial associated to some symmetric group, 
while Kazhdan-Lusztig polynomials of matroids are far more restrictive (see Proposition \ref{lin-non-neg}).
\end{remark}

The original work of Kazhdan and Lusztig begins with an algebraic question (How can we find a basis
for the Hecke algebra with certain nice properties?), which led them to both the combinatorics and the
geometry. In our work, we began with a geometric question (What is the intersection cohomology
of the reciprocal plane?), which led us naturally to the combinatorics.  The algebraic facet of our
work is somewhat more speculative and {\em ad hoc}, representing an attempt to trace backward the route of Kazhdan and Lusztig. 

There is no known convolution product in the geometry of the reciprocal plane which would account for the $q$-deformed M\"obius algebra 
$E_q(M)$, as the convolution product on flag varieties produces the Hecke algebra. Unlike in the Coxeter setting, 
the Kazhdan-Lusztig basis of $E_q(M)$ currently has no intrinsic definition, and the theory of this basis is far less
satisfactory. For example, the basis is cellular, but in a trivial way: the cells are all one-dimensional. The identity is not an element of the basis.

\begin{remark} When $W$ is a finite Weyl group, yet another important interpretation of $P_{x,y}(t)$ is that it records the multiplicity space of a simple module in a Verma module in the
graded lift of Berstein-Gelfand-Gelfand category $\mathcal{O}$ \cite{BB,BryKash}. The analogous goal for matroids would be a categorification of the $q$-deformed M\"obius algebra $E_q(M)$,
or its regular representation. The M\"obius algebra $E(M)$ is categorified by a monoidal category of ``commuting" quiver representations \cite[Theorem 7]{Baclawski}, but we do not know how
to modify this category to produce a categorification of $E_q(M)$. \end{remark}

Having made these caveats, the observed phenomenon of positivity indicates that our Kazhdan-Lusztig basis does hold interest. There are numerous other ways one could have used the
Kazhdan-Lusztig polynomials of a matroid as a change of basis matrix, but the corresponding bases do not have positive structure coefficients. As seen in Remark \ref{needKLornotpositive},
positivity is a subtle question, and would fail if all the Kazhdan-Lusztig polynomials were trivial.\\

We now give a more detailed summary of the contents of the paper.  Section \ref{sec:combinatorics} (Combinatorics)
is dedicated to the combinatorial definition of $P_M(t)$ along with basic properties and examples.
In addition to our conjecture that the coefficients of $P_M(t)$ are non-negative (Conjecture
\ref{positivity}), we also conjecture that they form a log concave sequence (Conjecture \ref{log concave}).
We explicitly compute the coefficients of $t$ and $t^2$ in terms of the Whitney numbers of the lattice of flats of $M$
(Propositions \ref{linear term} and \ref{quadratic term}).
We prove non-negativity of the linear coefficient (Proposition \ref{lin-non-neg}),
and we give formulas for the quadratic and cubic term (Propositions \ref{quadratic term} and \ref{cubic term}), 
though even in these cases we cannot prove non-negativity (Remark \ref{half of life}).
We prove a product formula for direct sums 
(Proposition \ref{product}), which eliminates the possibility of ``cheap" counterexamples to Conjecture
\ref{positivity} (Remark \ref{cheap}).

We also study in detail the cases of uniform matroids and braid matroids.  For uniform matroids,
we provide an even more explicit computation of the polynomial up to the cubic term 
(Corollary \ref{uniform coefficients}). 
For the braid matroid $M_n$ corresponding to the complete graph on $n$ vertices, we explain
how to compute the coefficients of the Kazhdan-Lusztig polynomial using Stirling numbers.
In an appendix, written jointly with Ben Young, we give tables of Kazhdan-Lusztig polynomials
of uniform matroids and braid matroids of low rank.
The polynomials that we see are unfamiliar; in particular, they do not appear to be related
to any known matroid invariants.  
For both uniform matroids and braid matroids,
we express the defining recursion in terms of a generating function identity (Propositions \ref{uniform gf}
and \ref{braid gf}).\\

The purpose of Section \ref{sec:geometry} (Geometry)
is to prove that, if $M_\cA$ is the matroid associated to a vector arrangement
$\cA$ over a finite field, then the Kazhdan-Lusztig polynomial of $M_\cA$ coincides with the $\ell$-adic \'etale
intersection cohomology Poincar\'e polynomial of the reciprocal plane $X_\cA$ (Theorem \ref{comb=geom}).
The key ingredient to our proof is Theorem \ref{etale}, which says that, in an \'etale neighborhood of
any point, $X_\cA$ looks like the product of a vector space with a neighborhood
of the cone point in the reciprocal plane
of a certain smaller hyperplane arrangement.  This improves upon a result of Sanyal, Sturmfels, and Vinzant
\cite[Theorem 24]{SSV}, who prove the analogous statement on the level of tangent cones.

We conclude Section \ref{sec:geometry} with a digression in which we discuss a 
certain question of Li and Yong \cite{LiYong}.  Given a point on a variety, they compare two polynomials:
the local intersection cohomology Poincar\'e polynomial, and the numerator of the Hilbert series of the tangent cone.
They are interested in the case of Schubert varieties, where the first polynomial is a Kazhdan-Lusztig polynomial.
We consider the case of reciprocal planes, where the first polynomial is the Kazhdan-Lusztig polynomial of a matroid
and the second polynomial is the $h$-polynomial of the broken circuit complex of the same matroid.\\

Section \ref{sec:algebra} (Algebra) deals with the M\"obius algebra of a matroid, which has a $\Z$-basis
given by flats with multiplication given by the join\footnote{In the literature,
one usually sees the multiplication given by meet rather than join.  However, these two products
are isomorphic; indeed, both are isomorphic to the coordinatewise product \cite{Solomon}.
The join product will be more natural for our purposes.}
operation:  $\ep_F\cdot \ep_G = \ep_{F\vee G}$.  We introduce a $q$-deformation of this algebra;
that is, a commutative, associative, unital $\Z[q,q^{-1}]$-algebra with basis given by flats,
such that specializing $q$ to 1 recovers the original M\"obius algebra (Proposition \ref{deformation}).
Using Kazhdan-Lusztig polynomials, we define a new basis whose relationship to the standard basis
is analogous to the relationship between the canonical basis and standard basis for the Hecke algebra,
and we conjecture that the structure coefficients for multiplication in the new basis lie in $\N[q]$ 
(Conjecture \ref{structure coefs}).  We verify this conjecture for Boolean matroids (Proposition \ref{Boolean}),
for uniform matroids of rank at most 3 (Subsection \ref{sec:alg:uni}), and for braid matroids of rank at most 3
(Subsection \ref{sec:alg:braid}).\\\\

\noindent
{\bf Addendum:}  After this paper was published, Ben Young discovered counterexamples to Conjecture \ref{structure coefs}.
See Section \ref{update}.

\vspace{\baselineskip}
\noindent
{\em Acknowledgments:}
The authors would like to thank June Huh,
Joseph Kung, Emmanuel Letellier, Carl Mautner, Hal Schenck, Ben Webster,  Ben Young,
and Thomas Zaslavsky
for their helpful contributions.  The third author is grateful to the University of Oregon
for its hospitality during the completion of this project.

\section{Combinatorics}\label{sec:combinatorics}
In this section we give a combinatorial definition of the Kazhdan-Lusztig polynomial of a matroid,
we compute the first few coefficients, and we study the special cases of uniform matroids and braid matroids.

\subsection{Definition}
Let $M$ be a matroid with no loops on a finite ground set $\cI$.  Let $L(M)\subset 2^\cI$ 
denote the lattice of flats of $M$,
ordered by inclusion, with minimum element $\emptyset$.
Let $\mu$ be the M\"obius function on $L(M)$,
and let $$\chi_M(t) = \sum_{F\in L(M)} \mu(\emptyset,F)\, t^{\rk M - \rk F}$$ 
be the {\bf characteristic polynomial} of $M$.
For any flat $F\in L(M)$, let $\cI^F = \cI\smallsetminus F$ and $\cI_F = F$.  Let $M^F$ be the matroid
on $\cI^F$ consisting of subsets of $\cI^F$ whose union with a basis for $F$ are independent in $M$, 
and let $M_F$ be the matroid on $\cI_F$ consisting of subsets of $\cI_F$ which are independent in $M$.
We call the matroid $M^F$ the {\bf restriction} of $M$ at $F$, and $M_F$ the {\bf localization}
of $M$ at $F$.  (This terminology and notation comes from the corresponding
constructions for arrangements; see Subsection \ref{sec:xa}.)
We have $\rk M^F = \rk M - \rk F$ and $\rk M_F = \rk F$.

\begin{lemma}\label{cancellation}
For any matroid $M$ of positive rank, $\displaystyle\sum_{F\in L(M)}t^{\rk F}\chi_{M_F}(t^{-1})\chi_{M^F}(t) = 0.$
\end{lemma}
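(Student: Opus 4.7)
The plan is a direct computation via Möbius inversion on $L(M)$, exploiting the fact that localizations and restrictions correspond to lower and upper intervals in the lattice of flats.

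First I would unfold both characteristic polynomials in terms of the ambient Möbius function on $L(M)$. Since $L(M_F) \cong [\emptyset, F]$, one obtains
$$t^{\rk F}\,\chi_{M_F}(t^{-1}) \;=\; \sum_{G \le F} \mu(\emptyset, G)\, t^{\rk G},$$
while since $L(M^F) \cong [F, \hat 1]$ (with rank shifted by $\rk F$),
$$\chi_{M^F}(t) \;=\; \sum_{H \ge F} \mu(F, H)\, t^{\rk M - \rk H}.$$
Neither of these involves $t^{\pm 1}$ outside polynomial ranges, which is the point of the $t^{\rk F}$ factor in the lemma.

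Next I would plug these into the left-hand side and interchange the order of summation, collecting terms by the triple $G \le F \le H$ in $L(M)$:
$$\sum_{F \in L(M)} t^{\rk F}\chi_{M_F}(t^{-1})\chi_{M^F}(t) \;=\; \sum_{G \le H} \mu(\emptyset, G)\, t^{\rk G + \rk M - \rk H} \sum_{F \in [G, H]} \mu(F, H).$$
The inner sum is the defining Möbius identity $\sum_{F \in [G,H]} \mu(F, H) = \delta_{G, H}$, so only the diagonal $G = H$ survives and all the $t^{\rk G}, t^{-\rk H}$ factors cancel, leaving $t^{\rk M}\sum_{G \in L(M)} \mu(\emptyset, G) = t^{\rk M} \chi_M(1)$.

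Finally I would close by recalling the standard fact that $\chi_M(1) = \sum_G \mu(\emptyset, G) = 0$ whenever the lattice $L(M)$ has more than one element, which is exactly the positive rank hypothesis (it is the Möbius identity applied to the pair $(\emptyset, \hat 1)$ with $\hat 1 \ne \emptyset$). No step looks like a real obstacle; the only place one must be careful is bookkeeping the ranks under the identifications $L(M_F) \cong [\emptyset, F]$ and $L(M^F) \cong [F, \hat 1]$, which is precisely what makes the factor $t^{\rk F}$ in front of $\chi_{M_F}(t^{-1})$ the correct normalization to produce the clean Möbius collapse.
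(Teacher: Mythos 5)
Your proof is essentially identical to the paper's: both expand $\chi_{M_F}$ and $\chi_{M^F}$ via the Möbius function on $L(M)$, interchange the order of summation over the chain $G\le F\le H$, collapse the middle sum using $\sum_{F\in[G,H]}\mu(F,H)=\delta(G,H)$, and conclude from $\sum_G\mu(\emptyset,G)=0$ when $\rk M>0$. Only the choice of dummy-variable names differs.
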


\begin{proof}
We have
\begin{eqnarray*}
\sum_{F}t^{\rk F}\chi_{M_F}(t^{-1})\chi_{M^F}(t) &=&
\sum_{F}t^{\rk F}\sum_{E\leq F}\mu(\emptyset,E)\, t^{\rk E-\rk F}\sum_{G\geq F}\mu(F,G)\, t^{\rk M - \rk G}\\\\
&=& \sum_{E\leq F\leq G}\mu(\emptyset,E)\mu(F,G)\, t^{\rk M + \rk E - \rk G}\\\\
&=& t^{\rk M} \sum_{E\leq G}\mu(\emptyset,E)\, t^{\rk E - \rk G} \sum_{F\in[E,G]}\mu(F,G).
\end{eqnarray*}
The internal sum is equal to $\delta(E,G)$ \cite[2.38]{OT}, thus our equation simplifies to
$$\sum_{F}t^{\rk F}\chi_{M_F}(t^{-1})\chi_{M^F}(t) = t^{\rk M} \sum_F \mu(\emptyset,F).$$
This is 0 unless $\rk M = 0$.
\end{proof}

The following is our first main result.

\begin{theorem}\label{KL-exists}
There is a unique way to assign to each matroid $M$ a polynomial $P_M(t)\in\mathbb{Z}[t]$ 
such that the following conditions are satisfied:
\begin{enumerate}
\item If $\rk M = 0$, then $P_M(t) = 1$.
\item If $\rk M > 0$, then $\deg P_M(t) < \tfrac{1}{2}\rk M$.
\item For every $M$, $\displaystyle t^{\rk M} P_M(t^{-1}) = \sum_{F}\chi_{M_F}(t) P_{M^F}(t).$
\end{enumerate}
\end{theorem}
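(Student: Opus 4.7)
The plan is to proceed by induction on $\rk M$. The base case $\rk M = 0$ is forced by condition (1). For the inductive step, I would first observe that in the right-hand side of (3) the $F=\emptyset$ term contributes exactly $\chi_{M_\emptyset}(t)\, P_{M^\emptyset}(t) = P_M(t)$, so (3) is equivalent to $t^{\rk M} P_M(t^{-1}) - P_M(t) = R(t)$, where
$$R(t) := \sum_{F > \emptyset}\chi_{M_F}(t)\, P_{M^F}(t)$$
is defined entirely in terms of matroids of strictly smaller rank, hence known by induction.

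Next I would use condition (2) to observe that $P_M(t)$ has support in degrees strictly below $\rk M/2$, while $t^{\rk M} P_M(t^{-1})$ has support in degrees strictly above $\rk M/2$. These two ranges of degrees are disjoint, so the map $P_M \mapsto t^{\rk M} P_M(t^{-1}) - P_M(t)$ is injective on polynomials of degree less than $\rk M/2$, and its image consists precisely of those polynomials $Q(t)$ of degree at most $\rk M$ satisfying the antisymmetry $t^{\rk M} Q(t^{-1}) = -Q(t)$. Hence a unique $P_M$ satisfying (1)--(3) exists if and only if $R(t)$ enjoys this antisymmetry.

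The main work is then to establish $t^{\rk M} R(t^{-1}) = -R(t)$, and this is where Lemma \ref{cancellation} enters. By the inductive hypothesis applied to each $M^F$ with $F>\emptyset$, one has
$$t^{\rk M - \rk F} P_{M^F}(t^{-1}) \;=\; \sum_{G \geq F}\chi_{M^F_G}(t)\, P_{M^G}(t),$$
where $M^F_G$ denotes the matroid associated to the interval $[F,G]$. Substituting this into $t^{\rk M} R(t^{-1}) = \sum_{F > \emptyset} t^{\rk F}\chi_{M_F}(t^{-1})\cdot t^{\rk M - \rk F} P_{M^F}(t^{-1})$ and swapping the order of summation yields
$$t^{\rk M} R(t^{-1}) \;=\; \sum_{G} P_{M^G}(t) \sum_{\emptyset < F \leq G} t^{\rk F}\chi_{M_F}(t^{-1})\chi_{M^F_G}(t).$$
Under the identifications $(M_G)_F = M_F$ and $(M_G)^F = M^F_G$ for $F \leq G$, the inner sum differs from the left-hand side of Lemma \ref{cancellation} applied to $M_G$ only by the $F = \emptyset$ term, which equals $\chi_{M_G}(t)$. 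For $G > \emptyset$ the lemma gives zero, so the inner sum equals $-\chi_{M_G}(t)$; for $G = \emptyset$ the inner sum is vacuous. Therefore $t^{\rk M} R(t^{-1}) = -R(t)$, as required.

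The main obstacle, once the induction is set up, is the bookkeeping in this last step: swapping the double summation, identifying the interval matroid $M^F_G$ with both $(M^F)_{G\smallsetminus F}$ and $(M_G)^F$, and correctly isolating the $F = \emptyset$ term so that Lemma \ref{cancellation} applies to $M_G$ in the form needed. Everything else is a routine exploitation of the degree-parity separation built into condition (2).
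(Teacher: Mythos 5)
Your proof is correct and follows essentially the same route as the paper's: isolate the $F=\emptyset$ term, reduce existence to the antisymmetry $t^{\rk M}R(t^{-1})=-R(t)$, expand $P_{M^F}(t^{-1})$ by the inductive hypothesis, swap the double sum, and apply Lemma \ref{cancellation} to $M_G$ after splitting off the $F=\emptyset$ contribution $\chi_{M_G}(t)$. The only cosmetic difference is that the paper adds and subtracts the $F=\emptyset$ term inside the $G$-sum rather than invoking the lemma as a statement about the complement of that term; the underlying computation is identical.
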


The polynomial $P_M(t)$ will be called the {\bf Kazhdan-Lusztig polynomial} of $M$.  Our proof
of Theorem \ref{KL-exists} closely follows Lusztig's combinatorial proof of the existence of the usual
Kazhdan-Lusztig polynomials \cite[Proposition 2]{Lusztig-Left}, which he attributes to Gabber.

\begin{proof}
Let $M$ be a matroid of positive rank.
We may assume inductively that $P_{M'}(t)$ has been defined for every matroid $M'$
of rank strictly smaller than $\rk M$; in particular, $P_{M^F}(t)$ has been defined for all $\emptyset\neq F\in L(M)$.
Let $$R_M(t) := \sum_{\emptyset\neq F}\chi_{M_F}(t) P_{M^F}(t);$$
then item 3 says exactly that $$t^{\rk M} P_M(t^{-1}) - P_M(t) = R_M(t).$$
It is clear that there can be at most one polynomial $P_M(t)$ of degree strictly less than $\frac{1}{2}\rk M$
satisfying this condition.  The existence of such a polynomial is equivalent to the statement
$t^{\rk M}R_M(t^{-1}) = - R_M(t)$.  

We have
\begin{eqnarray*}
t^{\rk M}R_M(t^{-1}) &=& t^{\rk M}\sum_{\emptyset\neq F}\chi_{M_F}(t^{-1}) P_{M^F}(t^{-1})\\\\
&=& \sum_{\emptyset\neq F} t^{\rk F}\chi_{M_F}(t^{-1}) t^{\rk M^F}P_{M^F}(t^{-1})\\\\
&=& \sum_{\emptyset\neq F\leq G} t^{\rk F}\chi_{M_F}(t^{-1}) \chi_{M^F_G}(t) P_{M^G}(t)\\\\
&=& \sum_{\emptyset\neq G}\left(-\chi_{M_G}(t) P_{M^G}(t) + P_{M^G}(t) \sum_{F\leq G} t^{\rk F}\chi_{M_F}(t^{-1}) \chi_{M^F_G}(t)\right).
\end{eqnarray*}
Since $\rk M_G = \rk G \neq 0$, Lemma \ref{cancellation} says that the internal sum is zero for all 
$G\neq \emptyset$, so our equation simplifies to
$t^{\rk M}R_M(t^{-1}) = - \displaystyle\sum_{\emptyset\neq G} \chi_{M_G}(t) P_{M^G}(t) = -R_M(t)$.
\end{proof}

\begin{conjecture}\label{positivity}
For any matroid $M$, the coefficients of the Kazhdan-Lusztig polynomial $P_M(t)$ are non-negative.
\end{conjecture}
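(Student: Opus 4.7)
The plan is to argue by strong induction on $\rk M$, with the base case $\rk M = 0$ immediate from condition (1) of Theorem \ref{KL-exists}. For the inductive step, I would split the argument into the representable case, where a clean geometric argument is available, and the general case, which requires a genuinely new combinatorial input. The tension in the inductive step is that the recursion of Theorem \ref{KL-exists}(3) expresses $P_M(t)$ as (the low-degree part of) $-\sum_{\emptyset \neq F}\chi_{M_F}(t)P_{M^F}(t)$, a sum with systematically alternating signs coming from the M\"obius function, so term-by-term positivity is hopeless and one needs a geometric or Hodge-theoretic source of positivity.

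For representable matroids I would invoke Theorem \ref{comb=geom}: if $M = M_\cA$ for an arrangement defined over a finite field, then the coefficients of $P_M(t)$ are dimensions of $\ell$-adic intersection cohomology groups of the reciprocal plane $X_\cA$, and hence non-negative. Since any matroid representable over some field is representable over a finite field (by specializing to a residue field of its coefficient ring), this settles the representable case, as recorded in Corollary \ref{nonneg-cor}.

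For an arbitrary matroid $M$, my strategy would be to construct, by induction on rank and purely combinatorially, a finite-dimensional graded vector space $\IH(M) = \bigoplus_i \IH^i(M)$ whose Poincar\'e polynomial equals $P_M(t)$. A natural setting is as a distinguished graded module over the Feichtner--Yuzvinsky Chow ring $A^*(M)$, in the spirit of the Adiprasito--Huh--Katz Hodge theory for matroids. The inductive construction would come together with a combinatorial analog of the decomposition theorem: the modules $\IH(M^F)$, twisted by $\chi_{M_F}(t)$ (interpreted as alternating sums of graded pieces coming from the intersection flag complex of $[\emptyset,F]$), should assemble to reproduce the recursion of Theorem \ref{KL-exists}(3), with the degree bound $\deg P_M(t) < \tfrac{1}{2}\rk M$ corresponding to a Poincar\'e-type vanishing range for $\IH^i(M)$.

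The main obstacle is constructing $\IH(M)$ and proving such a combinatorial decomposition theorem. In the Coxeter group setting, the analogous program was carried out by Elias and Williamson via Soergel bimodules, using hard Lefschetz and Hodge--Riemann relations proved combinatorially. For matroids, the Adiprasito--Huh--Katz framework provides these Hodge-theoretic properties for $A^*(M)$ itself, but one would need to upgrade them to the candidate $\IH(M)$ modules, and to develop a combinatorial substitute for the proper pushforward of intersection cohomology sheaves that implements the stratification by flats. Short of this, one could try to verify positivity coefficient by coefficient using the explicit formulas of Propositions \ref{linear term}, \ref{quadratic term}, and \ref{cubic term}; this yields the linear term unconditionally but, as noted in Remark \ref{half of life}, already fails to deliver positivity in the quadratic and cubic cases, which is the clearest indication that a genuinely structural argument is required.
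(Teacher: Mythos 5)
The statement you are addressing is, in this paper, only a \emph{conjecture}: the authors prove it for representable matroids (Corollary \ref{nonneg-cor}, via Theorem \ref{comb=geom}) and leave the general case open. Your treatment of the representable case is exactly the paper's route: identify the coefficients of $P_M(t)$ with dimensions of $\ell$-adic intersection cohomology of the reciprocal plane, reduce from arbitrary fields to finite fields (the paper cites Rado; your ``specialize to a residue field'' gesture is in the same spirit but Rado's theorem is the precise reference one needs), and conclude non-negativity. So for what the paper actually proves, you and the paper agree.

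For the general case, you are candid that your plan is a program rather than a proof, and you correctly locate the missing ingredient: one would need a combinatorially defined graded module playing the role of $\IH^*(X_\cA)$, together with a decomposition-theorem-type statement that reproduces the recursion of Theorem \ref{KL-exists}(3) with the degree bound implementing the support/vanishing condition. The paper offers no such construction and does not attempt one; your sketch therefore goes beyond what the paper claims, but it does not close the gap. You also correctly observe, echoing Remark \ref{half of life}, that the explicit Whitney-number formulas for the quadratic and cubic coefficients do not yield unconditional non-negativity (the quadratic case is already entangled with the top-heavy conjecture), so a term-by-term attack on the recursion is a dead end. In short: your proposal matches the paper's proof where the paper has one, and honestly flags the remaining conjectural part; there is no error, but there is also no new proof.
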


\begin{remark}
In Section \ref{sec:geometry}, we will prove Conjecture \ref{positivity} for representable matroids
by providing a cohomological interpretation of the polynomial $P_M(t)$ (Theorem \ref{comb=geom}).
\end{remark}

Based on our computer computations for uniform matroids and braid matroids (see appendix), 
along with Proposition \ref{lin-non-neg} and Remark \ref{evidence},
we make the following
additional conjecture.  A sequence $e_0,\ldots,e_r$ is called {\bf log concave} if, for all $1<i<r$, 
$e_{i-1}e_{i+1}\leq e_i^2$.
It is said to have {\bf no internal zeros} if the set $\{i\mid e_i\neq 0\}$ is an interval.
Note that a log concave sequence of non-negative integers with no internal zeroes is always unimodal.

\begin{conjecture}\label{log concave}
For any matroid $M$, the coefficients of $P_M(t)$ form a log concave sequence with no internal zeroes.
\end{conjecture}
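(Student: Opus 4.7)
The plan is to attack Conjecture \ref{log concave} first for representable matroids using the geometric interpretation provided by Theorem \ref{comb=geom}, and then to bootstrap to general matroids via a combinatorial Hodge-theoretic model. Before anything else I would dispose of the trivial base cases: for $\rk M \leq 2$ one has $P_M(t) = 1$, which is log concave with no internal zeros, and I would use the tables in the appendix to confirm there is no small counterexample among uniform and braid matroids of low rank. I would also observe that, granting non-negativity (Conjecture \ref{positivity}) and the normalization $P_M(0) = 1$, the ``no internal zeros'' conclusion is an automatic consequence of log concavity: if $p_j = 0$ with $j \geq 1$, then $p_{j-1} p_{j+1} \leq p_j^2 = 0$ forces the zero to propagate outward, eventually contradicting $p_0 = 1$. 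So the real content is log concavity.

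For a representable matroid $M = M_\cA$, Theorem \ref{comb=geom} identifies $P_M(t)$ with the Poincar\'e polynomial of the stalk of the intersection cohomology sheaf of the reciprocal plane $X_\cA$ at its cone point. The idea is to produce a projective compactification $\overline{X_\cA}$ (for instance via a wonderful compactification of the arrangement complement, or a Bergman-fan toric ambient variety) and a suitable Lefschetz class on it, so that the full K\"ahler package --- Poincar\'e duality, hard Lefschetz, and the Hodge-Riemann bilinear relations --- holds on $\IH^*(\overline{X_\cA};\Ql)$. Hard Lefschetz applied to the summand of the Decomposition Theorem controlling the local IC stalk at the cone point then yields unimodality of the coefficients of $P_M(t)$, while the Hodge-Riemann positivity on primitive classes upgrades this to log concavity by the standard Khovanskii-Teissier trick: positive-definiteness of the pairing on the primitive part of degree $i-1$ translates directly into the inequality $p_{i-1} p_{i+1} \leq p_i^2$.

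For a general, possibly non-representable matroid, I would try to imitate the Adiprasito-Huh-Katz strategy that settled the Heron-Rota-Welsh conjecture for the characteristic polynomial. Specifically, construct a combinatorially defined graded $\Q$-algebra $\IH(M)$ with a canonical degree pairing and a distinguished cone of Lefschetz elements, designed so that its Hilbert series recovers a palindromic completion of $P_M(t)$, and so that in the representable case $\IH(M)$ agrees with the actual intersection cohomology of an ambient compactification of $X_\cA$. The goal is then a purely matroid-theoretic K\"ahler package for $\IH(M)$, from which log concavity follows formally by the same Hodge-Riemann argument used in the representable case. A plausible starting point is to combine the Chow ring of the Bergman fan of $M$ with an ``IC-like'' submodule cut out by the deletion-restriction recursion of Theorem \ref{KL-exists}(3).

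The hard part will be twofold. First, log concavity is destroyed by nearly every operation appearing in the defining recursion: even granting inductively that each $P_{M^F}(t)$ is log concave, neither the product $\chi_{M_F}(t) P_{M^F}(t)$ nor the alternating combination that cuts out $P_M(t)$ need remain log concave, so a naive induction on $\rk M$ using Theorem \ref{KL-exists}(3) seems hopeless and some genuinely new invariant must carry the positivity. Second, Hodge-Riemann for intersection cohomology of a singular affine cone such as $X_\cA$ is delicate: one needs either a semismall resolution, whose existence is not obvious in this generality, or a hands-on description of the decomposition-theorem summands of a resolution built from the wonderful compactification. Both obstacles point toward the same moral conclusion: one likely cannot prove Conjecture \ref{log concave} without first producing the correct intersection-cohomology-like object attached to an arbitrary matroid.
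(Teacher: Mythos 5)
This statement is labeled \emph{Conjecture}~\ref{log concave} in the paper, and the paper supplies no proof of it: in the text it rests only on (a) computer computations for uniform and braid matroids collected in the appendix, (b) the fact that the linear coefficient is non-negative and vanishes precisely when $P_M(t)=1$ (Proposition~\ref{lin-non-neg}, cited in Remark~\ref{evidence} as evidence for the ``no internal zeros'' half), and (c) the observation in Remark~\ref{cheap} that Proposition~\ref{product} is consistent with log concavity because the convolution of non-negative log concave sequences with no internal zeros is again such. There is therefore no ``paper's own proof'' to compare against, and you should not present any argument here as though it settles the conjecture.

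That said, your write-up is candid that it is a program rather than a proof, and the program is reasonable and points at exactly the obstacles the authors signal. Two small remarks. First, your reduction of ``no internal zeros'' to log concavity plus non-negativity plus $P_M(0)=1$ is correct and is worth stating: it shows the conjecture really has one nontrivial half. Second, and more importantly, even for representable $M$ the geometric step you sketch is not routine: Theorem~\ref{comb=geom} gives $P_M(t)$ as the Poincar\'e polynomial of a \emph{stalk} of $\IC_{X_\cA}$ at the cone point, and there is no Hodge--Riemann package for stalks of $\IC$ sheaves on arbitrary singular affine cones. Proposition~\ref{purity} does give purity and a bound on degrees, and Hard Lefschetz on the projectivization $Z=\Proj \Fq[X_\cA]$ (used implicitly in that proof) gives unimodality of the primitive-part dimensions, but Hodge--Riemann-type positivity for the primitive pairing on $\IH^*(Z)$ is exactly the missing ingredient and cannot be assumed. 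Your paragraph identifying the two genuine obstacles --- that the defining recursion of Theorem~\ref{KL-exists}(3) does not propagate log concavity inductively, and that one needs a good combinatorial replacement for the intersection cohomology of $X_\cA$ valid for non-representable $M$ --- is a fair description of why the conjecture is open in this paper, but it is a diagnosis, not a proof.
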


\begin{remark}
If $M$ is representable, then Huh and Katz proved that the absolute values of the
coefficients of $\chi_M(q)$ form a log concave
sequence with no internal zeroes \cite[6.2]{HuhKatz}, solving a conjure of Read for graphical matroids and 
the representable case of a conjecture of Rota-Heron-Walsh for arbitrary matroids.
\end{remark}

\subsection{Direct sums}
The following proposition says that the Kazhdan-Lusztig polynomial is multiplicative on direct sums.

\begin{proposition}\label{product}
For any matroids $M_1$ and $M_2$, $P_{M_1\oplus M_2}(t) = P_{M_1}(t) P_{M_2}(t)$.
\end{proposition}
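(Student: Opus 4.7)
The plan is to prove the identity by strong induction on $\rk M_1 + \rk M_2$, using the uniqueness half of Theorem \ref{KL-exists}. Set $Q(t) := P_{M_1}(t)\, P_{M_2}(t)$; I will verify that $Q$ satisfies conditions (1)--(3) of Theorem \ref{KL-exists} for $M_1 \oplus M_2$, and uniqueness then forces $Q = P_{M_1 \oplus M_2}$.

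First I would record the standard structural facts about direct sums of matroids: $\rk(M_1 \oplus M_2) = \rk M_1 + \rk M_2$; the lattice $L(M_1 \oplus M_2)$ is canonically the product $L(M_1) \times L(M_2)$; the characteristic polynomial is multiplicative, $\chi_{M_1 \oplus M_2}(t) = \chi_{M_1}(t)\, \chi_{M_2}(t)$; and for any flat $F = (F_1, F_2)$ both the localization and the restriction split as $(M_1 \oplus M_2)_F = (M_1)_{F_1} \oplus (M_2)_{F_2}$ and $(M_1 \oplus M_2)^F = (M_1)^{F_1} \oplus (M_2)^{F_2}$. All of these are routine.

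Condition (1) is the base case $\rk M_1 = \rk M_2 = 0$, where both sides equal $1$. Condition (2) is immediate: if both $\rk M_i > 0$ then the strict inequalities $\deg P_{M_i}(t) < \tfrac{1}{2}\rk M_i$ add to give $\deg Q(t) < \tfrac{1}{2}\rk(M_1 \oplus M_2)$, and if exactly one of the ranks is zero the same bound is immediate from condition (2) applied to the other factor.

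The substance is condition (3). I would multiply the recursions for $P_{M_1}$ and $P_{M_2}$ to obtain
\begin{align*}
t^{\rk(M_1 \oplus M_2)} Q(t^{-1}) &= \sum_{(F_1, F_2) \in L(M_1) \times L(M_2)} \chi_{(M_1)_{F_1}}(t)\, \chi_{(M_2)_{F_2}}(t)\, P_{(M_1)^{F_1}}(t)\, P_{(M_2)^{F_2}}(t).
\end{align*}
For every $(F_1, F_2) \neq (\emptyset, \emptyset)$ the direct sum $(M_1)^{F_1} \oplus (M_2)^{F_2}$ has rank strictly less than $\rk M_1 + \rk M_2$, so the inductive hypothesis supplies $P_{(M_1)^{F_1}}(t)\, P_{(M_2)^{F_2}}(t) = P_{(M_1 \oplus M_2)^{(F_1, F_2)}}(t)$. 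Combined with the factorization of $\chi$, the displayed identity becomes
\begin{align*}
t^{\rk(M_1 \oplus M_2)} Q(t^{-1}) &= Q(t) + \sum_{F \neq \emptyset} \chi_{(M_1 \oplus M_2)_F}(t)\, P_{(M_1 \oplus M_2)^F}(t),
\end{align*}
where the leading $Q(t)$ is the contribution of the $(\emptyset,\emptyset)$ term. This is exactly condition (3) of Theorem \ref{KL-exists} for $M_1 \oplus M_2$ with $Q$ in place of $P_{M_1 \oplus M_2}$, and together with the degree bound uniqueness finishes the argument. The only real obstacle is the bookkeeping: one must carefully isolate the $(\emptyset, \emptyset)$ summand, which is the unique term not governed by the inductive hypothesis and which matches up $Q(t)$ with the would-be occurrence of $P_{M_1 \oplus M_2}(t)$ on the right.
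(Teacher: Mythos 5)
Your proposal is correct and follows essentially the same route as the paper: induct on total rank, multiply the two defining recursions, use multiplicativity of the characteristic polynomial and the inductive hypothesis on the proper flats, and isolate the $(\emptyset,\emptyset)$ summand. The only cosmetic difference is that you appeal directly to the uniqueness clause of Theorem \ref{KL-exists}, whereas the paper subtracts the two resulting identities and observes that the left side sits in high degree while the right side sits in low degree, forcing both to vanish --- the same degree argument that underlies the uniqueness you invoke.
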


\begin{proof}
We proceed by induction.  The statement is clear when $\rk M_1 = 0$ or $\rk M_2 = 0$.  Now assume
that the statement holds for $M_1'$ and $M_2'$ whenever $\rk M'_1\leq \rk M_1$ and 
$\rk M_2'\leq M_2$ with at least one of the two inequalities being strict.

We have $L(M_1\oplus M_2) = L(M_1)\times L(M_2)$.
The localization of $M_1\oplus M_2$ at $(F_1,F_2)$ is isomorphic to $(M_1)_{F_1}\oplus (M_2)_{F_2}$,
and the restriction at $(F_1,F_2)$ is isomorphic to $(M_1)^{F_1}\oplus (M_2)^{F_2}$.
The characteristic polynomial of $(M_1)_{F_1}\oplus (M_2)_{F_2}$ is the product of the two characteristic
polynomials, and our inductive hypothesis tells us that the Kazhdan-Lusztig polynomial of $(M_1)^{F_1}\oplus (M_2)^{F_2}$
is the product of the two Kazhdan-Lusztig polynomials, provided that $F_1\neq \emptyset$ or $F_2\neq \emptyset$.
These two observations, along with the recursive definition of the Kazhdan-Lusztig polynomial, combine to tell us that
$$t^{\rk M_1 + \rk M_2} P_{M_1\oplus M_2}(t^{-1}) -
t^{\rk M_1} P_{M_1}(t^{-1}) \cdot t^{\rk M_2} P_{M_2}(t^{-1})  = - P_{M_1}(t) P_{M_2}(t) + P_{M_1\oplus M_2}(t).$$
The left-hand side is concentrated in degree strictly greater than $\frac 1 2\rk M_1 + \frac 1 2 \rk M_2$,
while the right-hand side is concentrated in degree strictly less than $\frac 1 2\rk M_1 + \frac 1 2 \rk M_2$.
This tells us that both sides must vanish, and the proposition is proved.
\end{proof}

\begin{remark}\label{cheap}
Proposition \ref{product} rules out many potential counterexamples to Conjecture \ref{positivity}.
That is, one cheap way to construct a non-representable matroid is to fix a prime $p$ and let $M = M_1\oplus M_2$,
where $M_1$ is representable only in characteristic $p$ and $M_2$ is representable only in characteristic $\neq p$.
Proposition \ref{product} will tells that the Kazhdan-Lusztig polynomial of $M_1\oplus M_2$
is equal to the product of the Kazhdan-Lusztig polynomials of $M_1$ and $M_2$, each of which has 
non-negative coefficients because $M_1$ and $M_2$ are both representable.
\end{remark}

\begin{remark}
Proposition \ref{product} is also consistent with Conjecture \ref{log concave}, since
the convolution of two non-negative log concave sequences with no internal zeroes is again log concave
with no internal zeroes \cite[Theorem 1]{KookLC}.  Note that the corresponding statement would be false without
the no internal zeroes hypothesis.\footnote{We thank June Huh for pointing out this fact.}
\end{remark}

\begin{corollary}\label{boolean}
If $M$ is the Boolean matroid on any finite set, then $P_M(t) = 1$.
\end{corollary}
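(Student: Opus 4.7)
The plan is to reduce the corollary to the rank-one case via the direct sum formula of Proposition \ref{product}, and then dispatch the rank-one case by a single-line consequence of the defining recursion of Theorem \ref{KL-exists}.

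First I would handle the base case: if the ground set is empty, then $M$ has rank $0$ and $P_M(t) = 1$ by item~(1) of Theorem \ref{KL-exists}. For any nonempty finite set $S$, I would observe that the Boolean matroid on $S$ decomposes as $\bigoplus_{s \in S} U_s$, where $U_s$ denotes the Boolean matroid on the singleton $\{s\}$. Proposition \ref{product} then gives $P_M(t) = \prod_{s \in S} P_{U_s}(t)$, reducing the corollary to the claim that $P_U(t) = 1$ for a rank-one Boolean matroid $U$ on a single element $\{e\}$.

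For such a $U$, item~(2) of Theorem \ref{KL-exists} forces $\deg P_U(t) < \tfrac{1}{2}$, so $P_U(t) = c$ is a constant in $\Z$. The lattice of flats $L(U) = \{\emptyset, \{e\}\}$ has just two elements and an immediate computation gives $\chi_U(t) = t - 1$. I would then write out the recursion in item~(3) with $M = U$: the flat $\emptyset$ contributes $\chi_{U_\emptyset}(t)\, P_{U^\emptyset}(t) = 1 \cdot c = c$ (since $U^\emptyset = U$), and the flat $\{e\}$ contributes $\chi_U(t) \cdot P_{U^{\{e\}}}(t) = (t-1) \cdot 1 = t - 1$ (since $U^{\{e\}}$ has rank $0$). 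Comparing with $t \, P_U(t^{-1}) = tc$ gives $tc = c + (t - 1)$, which forces $c = 1$.

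There is no real obstacle here: Proposition \ref{product} has already done the substantive work, and the remaining content is the bookkeeping that the Boolean matroid factors as claimed and that $\chi_U(t) = t - 1$ in rank one. The corollary is essentially a sanity check that the recursion produces the expected answer on the simplest family of matroids.
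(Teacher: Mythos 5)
Your proof is correct and follows exactly the route the paper takes: decompose the Boolean matroid as a direct sum of rank-one matroids and invoke Proposition~\ref{product}. The only difference is that you explicitly verify $P_U(t)=1$ in the rank-one case via the defining recursion, a step the paper leaves to the reader.
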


\begin{proof}
The Boolean matroid on a set of cardinality $n$ is isomorphic to the direct sum of $n$ copies of
the unique rank 1 matroid on a set of cardinality $1$.
\end{proof}

\subsection{The first few coefficients}
In this subsection we interpret the first few coefficients of $P_M(t)$ in terms of the 
doubly indexed Whitney numbers of $M$, introduced by Green and Zaslavsky \cite{GreenZaslavsky}.

\begin{proposition}\label{constant term}
The constant term of $P_M(t)$ is equal to 1.
\end{proposition}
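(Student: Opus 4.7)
The plan is to induct on $\rk M$ and read off the constant term of $P_M(t)$ from the recursion in condition 3 of Theorem \ref{KL-exists}. The base case $\rk M = 0$ is immediate since then $P_M(t) = 1$ by definition.

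For the inductive step, I would observe that the constant term of $P_M(t)$ equals the coefficient of $t^{\rk M}$ in the polynomial $t^{\rk M} P_M(t^{-1})$. By condition 3, this equals the coefficient of $t^{\rk M}$ in $\sum_{F \in L(M)} \chi_{M_F}(t) P_{M^F}(t)$. So the task reduces to a degree analysis of each summand.

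The main computation is then a degree bound. For each flat $F$, the characteristic polynomial $\chi_{M_F}(t)$ is monic of degree $\rk F$, while $P_{M^F}(t)$ has degree strictly less than $\tfrac{1}{2}\rk M^F = \tfrac{1}{2}(\rk M - \rk F)$ whenever $M^F$ has positive rank, i.e., whenever $F \neq \cI$ (the top flat). Hence for $F \neq \cI$ the product $\chi_{M_F}(t) P_{M^F}(t)$ has degree strictly less than
\[
\rk F + \tfrac{1}{2}(\rk M - \rk F) = \tfrac{1}{2}(\rk M + \rk F) \leq \tfrac{1}{2}(2\rk M - 1) < \rk M,
\]
so such summands contribute nothing to the coefficient of $t^{\rk M}$. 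The only surviving summand is $F = \cI$, where $M^F$ has rank $0$ so $P_{M^F}(t) = 1$, leaving $\chi_{M_\cI}(t) = \chi_M(t)$. Since $\chi_M(t)$ is monic of degree $\rk M$, the coefficient of $t^{\rk M}$ is $1$, and hence $P_M(0) = 1$.

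There is no real obstacle here; the only thing to verify carefully is the degree bound, and the crucial point is that the inductive hypothesis on the $P_{M^F}$ is never actually needed — we only use the universal degree bound from condition 2. In fact one does not even need the inductive hypothesis on constant terms of lower-rank polynomials, since only the leading coefficient of $\chi_M$ plays a role. The same argument will be the template for extracting higher coefficients in later subsections, where degree matching no longer isolates a single summand.
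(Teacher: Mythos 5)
Your argument is correct, and it takes a genuinely different (and in fact leaner) route than the paper's. The paper extracts the coefficient of $t^0$ from both sides of the defining recursion: the left side $t^{\rk M}P_M(t^{-1})$ has no constant term because of the degree bound, so $0=\sum_F\chi_{M_F}(0)P_{M^F}(0)$; one then needs the inductive hypothesis $P_{M^F}(0)=1$ for nonempty $F$, together with the M\"obius identity $\sum_F\mu(\emptyset,F)=0$ for $\rk M>0$, to isolate $P_M(0)$. You instead extract the coefficient of $t^{\rk M}$, which on the left is exactly $P_M(0)$, and on the right is killed by the degree inequality $\deg\bigl(\chi_{M_F}\cdot P_{M^F}\bigr)<\tfrac{1}{2}(\rk M+\rk F)<\rk M$ for every proper flat $F$, leaving only the monic leading coefficient of $\chi_M(t)$ from $F=\cI$. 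As you correctly note, your version uses only condition 2 of Theorem \ref{KL-exists} applied to the already-defined polynomials $P_{M^F}$, so the induction is vestigial and the M\"obius identity is not needed at all. The trade-off is that the paper's computation of the constant term of the right-hand side is precisely the setup that generalizes to Propositions \ref{linear term}, \ref{quadratic term}, and \ref{cubic term}, where one reads off coefficients of $t^{\rk M-1}$, $t^{\rk M-2}$, etc.\ and multiple flats do contribute; your leading-coefficient argument is special to the top degree and does not generalize in the same way, though you anticipate this yourself in your closing remark.
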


\begin{proof}
We proceed by induction on the rank of $M$.
If $\rk M = 0$, then $P_M(t) = 1$ by definition.  If $\rk M > 0$, we consider the recursion
$$t^{\rk M} P_M(t^{-1}) = \sum_{F}\chi_{M_F}(t) P_{M^F}(t).$$
Since $\deg P_M(t) < \rk M$, the left-hand side has no constant term, therefore we have
$$0 = \sum_{F}\chi_{M_F}(0) P_{M^F}(0).$$
By our inductive hypothesis, we may assume that $P_{M^F}(0) = 1$ for all nonempty flats $F$,
and we therefore need to show that 
\begin{equation*}\label{want}
0 = \sum_{F}\chi_{M_F}(0).
\end{equation*}
This follows from the fact that $\chi_{M_F}(0) = \mu(\emptyset,F)$ and $\rk M> 0$.
\end{proof}

For all natural numbers $i$ and $j$, let 
$$w_{i,j}\; := \sum_{\rk E = i, \rk F = j} \mu(E,F)
\and
W_{i,j}\; := \sum_{\rk E = i, \rk F = j} \zeta(E,F),$$
where $\zeta(E,F) = 1$ if $E\leq F$ and 0 otherwise.
These are called {\bf doubly indexed Whitney numbers} of the first and second kind, respectively.
In the various propositions that follow, we let $d=\rk M$.

\begin{proposition}\label{linear term}
The coefficient of $t$ in $P_M(t)$ is equal to $W_{0,d-1} - W_{0,1}.$
\end{proposition}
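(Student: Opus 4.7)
The plan is to extract the coefficient of $t$ from $P_M(t)$ by comparing coefficients in the defining recursion
$$t^{d} P_M(t^{-1}) = \sum_{F}\chi_{M_F}(t) P_{M^F}(t),$$
where $d = \rk M$. Since $[t]P_M(t) = [t^{d-1}]\, t^{d} P_M(t^{-1})$, it suffices to compute the coefficient of $t^{d-1}$ on the right-hand side. I would first dispose of the small ranks ($d=1, 2$) directly using the degree bound $\deg P_M(t) < d/2$, which forces $[t]P_M(t) = 0$, and check that $W_{0,d-1} - W_{0,1}$ also vanishes in those cases (for $d=1$ the only rank-$1$ flat is the ground set, and for $d=2$ the expression is manifestly $0$).

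For $d \geq 3$, the main step is a degree-counting argument that identifies which flats contribute to $[t^{d-1}]$ on the right-hand side. Using that $\chi_{M_F}(t)$ has degree $\rk F$ and $P_{M^F}(t)$ has degree strictly less than $(d-\rk F)/2$, the product $\chi_{M_F}(t)P_{M^F}(t)$ has degree strictly less than $(d+\rk F)/2$. For this to reach $t^{d-1}$, one needs $\rk F \geq d-1$; the case $F = \emptyset$ is ruled out because $\deg P_M(t) < d/2 \leq d-1$. So only flats of rank $d-1$ and the single flat $\cI$ of rank $d$ contribute.

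I would then evaluate these two contributions. For $F = \cI$, the factor $P_{M^F}(t)$ equals $1$, so the contribution is $[t^{d-1}]\chi_M(t) = w_{0,1} = -W_{0,1}$, since every rank-$1$ flat satisfies $\mu(\emptyset,F) = -1$ (flats of rank $1$ cover $\emptyset$ in $L(M)$). For each flat $F$ of rank $d-1$, the matroid $M^F$ has rank $1$, so $P_{M^F}(t) = 1$, and the contribution reduces to $[t^{d-1}]\chi_{M_F}(t)$, which is the leading coefficient of the characteristic polynomial of a rank-$(d-1)$ matroid, namely $1$. Summing over all such $F$ gives $W_{0,d-1}$. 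Adding the two contributions produces the claimed formula $W_{0,d-1} - W_{0,1}$.

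The only delicate point is the degree bookkeeping: one must be careful that the strict inequality $\deg P_{M^F}(t) < (d-\rk F)/2$ really does exclude $\rk F = d-2$ (it does, since it gives $\deg < d-1$), and one must separately verify that the $F = \emptyset$ term does not contribute, as it represents the unknown quantity $P_M(t)$ itself. Neither obstacle is serious, and the result follows.
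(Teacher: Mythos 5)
Your proof is correct and follows essentially the same route as the paper: compare coefficients of $t^{d-1}$ in the defining recursion, observe (via a degree count) that only $F = \cI$ and the flats of rank $d-1$ can contribute, and evaluate those two contributions as $-W_{0,1}$ and $W_{0,d-1}$ respectively. The paper's published proof is simply a more terse version of yours, leaving the degree bookkeeping and the low-rank checks implicit.
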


\begin{proof}
We consider the defining recursion 
$$t^{\rk M} P_M(t^{-1}) = \sum_{F}\chi_{M_F}(t) P_{M^F}(t)$$
and compute the coefficient of $t^{\rk M -1}$ on the right-hand side.
The flat $F = \cI$ contributes $-W_{0,1}$, and each of the $W_{0,d-1}$
flats of rank $d-1$ contributes 1.
\end{proof}

\begin{remark}
If $M$ is the matroid associated to a hyperplane arrangement, Proposition \ref{linear term}
says that the coefficient of $t$ in $P_M(t)$ is equal to the number of lines in the lattice of flats
minus the number of hyperplanes.
\end{remark}




\begin{proposition}\label{lin-non-neg}
The coefficient of $t$ in $P_M(t)$ is always non-negative, and the following are equivalent:
\begin{enumerate}
\item[(i)] $P_M(t) = 1$
\item[(ii)] the coefficient of $t$ is zero
\item[(iii)] the lattice $L(M)$ is modular.
\end{enumerate}
\end{proposition}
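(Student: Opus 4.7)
The plan is to combine Proposition \ref{linear term} with the Dowling--Wilson theorem to handle non-negativity and the equivalence (ii) $\Leftrightarrow$ (iii), and then to prove (iii) $\Rightarrow$ (i) by induction on rank using Proposition \ref{product} together with the classification of connected modular matroids.

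By Proposition \ref{linear term}, the coefficient of $t$ in $P_M(t)$ equals $W_{0,d-1} - W_{0,1}$, the number of coatoms minus the number of atoms of $L(M)$. The Dowling--Wilson theorem states that this difference is always non-negative, and that it vanishes precisely when $L(M)$ is modular. This delivers both the non-negativity claim and the equivalence (ii) $\Leftrightarrow$ (iii); since (i) $\Rightarrow$ (ii) is trivial, only (iii) $\Rightarrow$ (i) remains to be proved.

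For (iii) $\Rightarrow$ (i) I induct on $d = \rk M$. The base cases $d \leq 2$ are immediate, because $\deg P_M < 1$ together with Proposition \ref{constant term} forces $P_M(t) = 1$. By Proposition \ref{product} I may further assume $M$ is connected, reducing to connected modular matroids of rank $d \geq 3$. Intervals of a modular lattice are modular, so $L(M^F)$ is modular for every non-empty flat $F$, and by the inductive hypothesis $P_{M^F}(t) = 1$. The defining recursion from Theorem \ref{KL-exists} then collapses to
\[
t^d\, P_M(t^{-1}) \;=\; \sum_{F \in L(M)} \chi_{M_F}(t),
\]
so showing $P_M(t) = 1$ is equivalent to showing $\sum_F \chi_{M_F}(t) = t^d$.

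The main obstacle is this last identity. When $d = 3$, $M$ is a projective plane and the degree bound $\deg P_M \leq 1$ together with the already-established vanishing of the linear coefficient forces $P_M = 1$ without further work. For $d \geq 4$, the Veblen--Young theorem identifies every connected modular matroid of rank $\geq 4$ as a classical projective geometry $PG(d-1,q)$, and the identity then reduces to the $q$-binomial theorem
\[
\sum_{k=0}^{d} \binom{d}{k}_q (t-1)(t-q)\cdots(t-q^{k-1}) \;=\; t^d,
\]
using that $PG(d-1,q)$ has $\binom{d}{k}_q$ flats of rank $k$, each with characteristic polynomial $(t-1)(t-q)\cdots(t-q^{k-1})$. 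This closes the induction.
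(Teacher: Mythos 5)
Your argument for non-negativity and for the equivalence (ii) $\Leftrightarrow$ (iii) matches the paper: both invoke Proposition \ref{linear term} together with the Dowling--Wilson hyperplane theorem. The two proofs diverge in how they establish (iii) $\Rightarrow$ (i). Both set up the same induction on rank, observe that intervals in a modular lattice are modular so $P_{M^F}(t)=1$ for $F\neq\emptyset$, and reduce to the identity $\sum_F \chi_{M_F}(t)=t^d$. The paper closes this by a purely lattice-theoretic argument: modularity of $L(M)$ guarantees that the order-dual of $L(M)$ is again a geometric lattice, so a dual matroid $M'$ exists with $L(M')\cong L(M)^{\mathrm{op}}$, and M\"obius inversion in $M'$ kills all terms of the sum of degree less than $d$. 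Your route is instead structural: you use Proposition \ref{product} to reduce to connected modular matroids, dispose of rank $\leq 3$ (and hence all projective planes, including non-Desarguesian ones) by the degree bound together with the already-proved vanishing of the linear coefficient, and for rank $\geq 4$ invoke Veblen--Young to identify $L(M)$ with $PG(d-1,q)$ (finiteness plus Wedderburn forcing the coordinatizing division ring to be a finite field), where the identity becomes the $q$-binomial expansion of $t^d$ in the basis $\{(t-1)(t-q)\cdots(t-q^{k-1})\}_k$. Your proof is correct; it is more concrete and makes the identity visibly explicit in the one family that matters, at the cost of invoking a substantial classification theorem and a connectivity reduction. The paper's proof is shorter, self-contained, and avoids classifying modular matroids entirely, needing only the single lattice-duality fact. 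Both are valid, but the paper's is the more economical abstraction.
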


\begin{proof}
Non-negativity of the linear term follows from Proposition \ref{linear term} 
along with the hyperplane theorem \cite[8.5.1 \& \S8.5]{Aigner}.  The hyperplane theorem also states that the linear
term is zero if and only if $L(M)$ is modular.
The first item obviously implies the second, so it remains only to show that $P_M(t) = 1$
whenever $L(M)$ is modular.

We proceed by induction on $d=\rk M$.  The base case is trivial.  Assume the statement holds
for all matroids of rank smaller than $d$, and that $L(M)$ is modular.  In particular, for any flat $F$, $L(M^F)$
is also modular, so we may assume that $P_{M^F}(t) = 1$ for all $F\neq\emptyset$.  Thus the defining
recursion says that
$$t^dP_M(t^{-1}) - P_M(t) = \sum_{F\neq\emptyset} \chi_{M_F}(t),$$
and we need only show that the right-hand side is equal to $t^d-1$.  Equivalently, we need to show that
$\sum_{F} \chi_{M_F}(t)$ is equal to $t^d$.

Since $L(M)$ is modular, there exists another matroid $M'$ such that $L(M')$ is dual to $L(M)$;
that is, there exists an order-reversing and rank-reversing bijection between $L(M)$ and $L(M')$.
(This is simply the statement that the dual of $L(M)$ is again a geometric lattice, which follows
from modularity.)  This implies that
$$\sum_{F} \chi_{M_F}(t) = \sum_{F,G} \mu_M(F,G) t^{d - \rk_M G}
= \sum_{F,G} \mu_{M'}(G,F) t^{\rk_{M'}G}.$$
By M\"obius inversion, this sum vanishes in all degrees less than $d$, and the coefficient of $t^d$
is equal to $\mu_{M'}(\cI,\cI) = 1$.
\end{proof}

\begin{remark}\label{evidence}
Note that the implication of (i) by (ii) in Proposition \ref{lin-non-neg} provides evidence
for the lack of internal zeroes in the sequence of coefficients of $P_M(t)$ (Conjecture \ref{log concave}).
\end{remark}

\begin{proposition}\label{quadratic term}
The coefficient of $t^2$ in $P_M(t)$ is equal to $$w_{0,2} - W_{1,d-1} + W_{0,d-2} - W_{d-3,d-2} + W_{d-3,d-1}.$$
\end{proposition}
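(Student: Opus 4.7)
The plan is to read off the coefficient of $t^{d-2}$ from each side of the defining recursion
$$t^d P_M(t^{-1}) \;=\; \sum_{F \in L(M)} \chi_{M_F}(t)\, P_{M^F}(t).$$
On the left, this coefficient is exactly the coefficient of $t^2$ in $P_M(t)$, provided $d > 4$ so that $\deg P_M(t) < d-2$; the remaining small-rank cases are easily checked directly, since the claimed formula then vanishes identically (in each of them some factor of a Whitney number forces the sum to telescope to zero).

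The key initial observation is a degree bound: for a flat $F$ of rank $r < d$, the product $\chi_{M_F}(t)\, P_{M^F}(t)$ has degree strictly less than $r + (d-r)/2 = (d+r)/2$, and for $r = d$ the factor $P_{M^F}$ is $1$ so the degree is $d$. Hence only flats of rank $r \in \{d, d-1, d-2, d-3\}$ can contribute to the coefficient of $t^{d-2}$, and for each allowed rank the monomial of $\chi_{M_F}(t)\, P_{M^F}(t)$ that contributes is forced.

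I would then handle the four cases in turn. The case $r = d$ has $F = \cI$ and contributes the coefficient of $t^{d-2}$ in $\chi_M(t)$, which is $w_{0,2}$. The cases $r = d-1$ and $r = d-2$ both have $P_{M^F}(t) = 1$ (since $\rk M^F \in \{1,2\}$ forces $P_{M^F}$ to be constant), so the contribution reduces to reading a single coefficient of $\chi_{M_F}(t)$. For $r = d-1$ one picks up $\mu(\emptyset, E) = -1$ for each atom $E \leq F$; summing over coatoms $F$ yields $-W_{1,d-1}$. For $r = d-2$ one reads the leading coefficient $1$ of $\chi_{M_F}(t)$, summing to $W_{0,d-2}$.

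The most involved case, and the main obstacle, is $r = d-3$. Here $P_{M^F}(t)$ has degree at most $1$, and only its linear term can contribute, multiplied by the leading $1$ of $\chi_{M_F}(t)$. By Proposition \ref{linear term} applied to the rank-$3$ matroid $M^F$, this linear coefficient equals $W_{0,2}(M^F) - W_{0,1}(M^F)$, and the task is to re-express these local Whitney numbers in terms of global data on $M$. Using the rank-shifted poset isomorphism $L(M^F) \cong [F, \cI] \subset L(M)$, one has $W_{0,j}(M^F) = |\{G \in L(M) : F \leq G,\; \rk G = \rk F + j\}|$, so summing over all rank-$(d-3)$ flats gives
$$\sum_{\rk F = d-3}\!\bigl(W_{0,2}(M^F) - W_{0,1}(M^F)\bigr) \;=\; W_{d-3,d-1} - W_{d-3,d-2}.$$
Assembling all four contributions assembles the claimed identity.
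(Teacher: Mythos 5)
Your proof is correct and follows essentially the same route as the paper: read off the coefficient of $t^{d-2}$ on each side of the defining recursion, observe that only flats of rank $d, d-1, d-2, d-3$ can contribute (the paper leaves the degree bound implicit), and identify the four contributions as $w_{0,2}$, $-W_{1,d-1}$, $W_{0,d-2}$, and $W_{d-3,d-1}-W_{d-3,d-2}$ via Proposition \ref{linear term}. The cautionary remark about $d\leq 4$ is unneeded, since the left-hand coefficient of $t^{d-2}$ always equals the $t^2$-coefficient of $P_M(t)$ and both sides automatically vanish in those small ranks, but it does no harm.
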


\begin{proof}
We again consider the defining recursion, and this time we
compute the coefficient of $t^{\rk M -2}$ on the right-hand side.
The flat $F=\cI$ contributes $w_{0,2}$,
each flat $F$ of rank $d-1$ contributes $-W_{0,1}(M_F)$, and $-\sum_{\rk F = d-1}W_{0,1}(M_F) = -W_{1,d-1}$.
Each of the $W_{0,d-2}$ flats of rank $d-2$ contributes 1.
Each flat $F$ of rank $d-3$ contributes the linear term of $P_{M^F}(t)$, which is equal to
$W_{0,2}(M^F) - W_{0,1}(M^F)$ by Proposition \ref{linear term}.  Summing over all such flats,
we obtain the final two terms $W_{d-3,d-1} - W_{d-3,d-2}$.
\end{proof}

\begin{remark}\label{half of life}
We have $w_{0,2} = W_{1,2} - W_{0,2},$
$$W_{1,2} - W_{1,d-1} = \sum_{\rk F = 1}\left(W_{0,1}(M^F) - W_{0,d-2}(M^F)\right),$$
and
$$W_{d-3,d-1} - W_{d-3,d-2} = \sum_{\rk G = d-3}\left(W_{0,2}(M^G) - W_{0,1}(M^G)\right),$$
thus the coefficient of $t^2$ is equal to
$$\sum_{\rk F = 1}\Big(W_{0,1}(M^F) - W_{0,d-2}(M^F)\Big)
\;\;+ \sum_{\rk G = d-3}\Big(W_{0,2}(M^G) - W_{0,1}(M^G)\Big)
\;\;+\;\; \Big(W_{0,d-2} - W_{0,2}\Big).$$
The hyperplane theorem says that each of the summands in the first sum is non-positive
and each of the summands in the second sum is non-negative.
The statement that $W_{0,d-2} - W_{0,2}$ is non-negative as long as $d\geq 4$ is a long standing
conjecture in matroid theory, called the ``top-heavy conjecture" 
\cite{DW75}, \cite[2.5.2]{Kung}.
(Note that if $d\leq 4$, then the coefficient of $t^2$ in $P_M(t)$ is automatically zero.)
Thus a comparison (in either direction) between the absolute values of the two sums would
yield a logical implication (in the corresponding direction) between 
Conjecture \ref{positivity} (for quadratic terms) and the top-heavy conjecture.
\end{remark}

The next proposition, whose proof we omit, indicates the difficulty with finding a closed formula for these coefficients. On the other hand, \cite[5.5]{Wak} presents a formula for all coefficients, albeit recursively defined, in the same vein as Proposition \ref{quadratic term}.



\begin{proposition}\label{cubic term}
The coefficient of $t^3$ in $P_M(t)$ is equal to 
\begin{eqnarray*}
&& w_{0,3} -W_{d-4,d-3}+ W_{d-4,d-1}-W_{1,d-2} +W_{0,d-3}\\\\
+&&
\sum\limits_{\rk F = d-1}w_{0,2}(M_F) - \sum\limits_{\rk F = d-3}W_{0,1}(M_F)\Big[W_{0,2}(M^F)-W_{0,1}(M^F)\Big]
\\\\
+&&\sum\limits_{\rk F = d-5}\Big[ w_{0,2}(M^F)-W_{1,4}(M^F)+W_{0,3}(M^F)+W_{2,4}(M^F)-W_{2,3}(M^F)\Big].
\end{eqnarray*}
\end{proposition}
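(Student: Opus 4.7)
The plan is to mirror the proofs of Propositions \ref{linear term} and \ref{quadratic term}, applying the defining recursion
$$t^d P_M(t^{-1}) = \sum_F \chi_{M_F}(t)\, P_{M^F}(t)$$
and extracting the coefficient of $t^{d-3}$ on the right-hand side. For each flat $F$, set $r = \rk F$ and expand
$$\chi_{M_F}(t) = \sum_{j=0}^r c_j^F\, t^{r-j}, \qquad c_j^F = \sum_{\substack{E \leq F \\ \rk E = j}} \mu(\emptyset,E),$$
along with $P_{M^F}(t) = \sum_{i \geq 0} p_i^F\, t^i$, which by Theorem \ref{KL-exists} is the constant polynomial $1$ if $r = d$ and otherwise satisfies $2i < d-r$. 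The coefficient of $t^{d-3}$ in the product $\chi_{M_F}(t) P_{M^F}(t)$ then equals the sum of $c_j^F p_i^F$ taken over pairs $(i,j)$ with $r - j + i = d - 3$, $j \geq 0$, and $i$ respecting the above bound. A short analysis of these constraints shows that only flats of rank $r \in \{d, d-1, d-2, d-3, d-4, d-5\}$ can contribute.

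The next step is to work through each of these six ranks using the explicit formulas $p_0^F = 1$ from Proposition \ref{constant term}, the linear coefficient from Proposition \ref{linear term}, and the quadratic coefficient from Proposition \ref{quadratic term}. The cases $r = d$, $r = d-1$, and $r = d-2$ involve only $p_0^F = 1$ and produce $w_{0,3}$, $\sum_{\rk F = d-1} w_{0,2}(M_F)$, and $-W_{1,d-2}$ respectively. The case $r = d-3$ splits into the pairs $(i,j) = (0,0)$ and $(i,j) = (1,1)$, producing $W_{0,d-3}$ together with $-\sum_{\rk F = d-3} W_{0,1}(M_F)\bigl[W_{0,2}(M^F) - W_{0,1}(M^F)\bigr]$. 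The case $r = d-4$ has only $(i,j) = (1,0)$ and, after applying the identity $\sum_{\rk F = a} W_{0,c}(M^F) = W_{a,a+c}$, gives $W_{d-4,d-1} - W_{d-4,d-3}$. Finally, the case $r = d-5$ has only $(i,j) = (2,0)$, and applying Proposition \ref{quadratic term} to $M^F$ of rank $5$ produces the final bracketed sum in the statement.

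Adding the six contributions and regrouping yields the formula. The main obstacle is purely bookkeeping: one must keep track of which sums $\sum_{\rk F = a} W_{b,c}(M^F)$ collapse to a single Whitney number $W_{a,a+c}$ of $M$ (precisely those with $b = 0$) and which do not (those with $b > 0$, or those involving $w$ numbers of localizations $M_F$), so that the final expression correctly distinguishes the simplified Whitney-number terms from the residual iterated sums. No coefficient of $P_{M^F}$ beyond $p_2^F$ is required, so the argument needs no identity beyond those already established in the preceding propositions.
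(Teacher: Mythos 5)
Your proof is correct and follows the computation the authors allude to (they omit the proof, remarking that it proceeds ``in the same vein as Proposition \ref{quadratic term}''): extract the coefficient of $t^{d-3}$ from the defining recursion, observe that only flats of rank $d, d-1, \ldots, d-5$ contribute given the degree bound on $P_{M^F}$, and substitute the known formulas for the constant, linear, and quadratic coefficients of $P_{M^F}$. Your accounting of which pairs $(i,j)$ survive the constraint $2i < d - r$ in each rank, and which intermediate sums collapse via $\sum_{\rk F = a} W_{0,c}(M^F) = W_{a,a+c}$, is exactly right.
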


\subsection{Uniform matroids}
Given non-negative integers $d$ and $m$, let $M_{m,d}$ be the uniform matroid of rank $d$ on a set of cardinality $m+d$,
and write $$P_{M_{m,d}}(t) = P_{m,d}(t) =  \sum_i c^i_{m,d}\; t^i.$$
The values of $P_{m,d}(t)$ for small $m$ and $d$ appear in the appendix.

For any flat $F$ of rank strictly less than $d$, the localization $(M_{m,d})_F$ is a Boolean matroid,
and the restriction $M_{m,d}^F$ is isomorphic to $M_{m,d-\rk F}$, thus our recursive definition will give us a recursive
relation among the coefficients $c^i_{m,d}$ for a single fixed $m$.  Specifically, we have the following result
(the factor before $c^j_{m,k}$ is a trinomial coefficient).

\begin{proposition}\label{uniform coefficient recursion}
For any $m$, $d$, and $i$, we have
$$c_{m,d}^i = (-1)^i\binom{m+d}{i} + 
\sum_{j=0}^{i-1}\sum_{k=2j+1}^{i+j}(-1)^{i+j+k}\binom{m+d}{m+k,i+j-k,d-i-j} c^j_{m,k}.$$
\end{proposition}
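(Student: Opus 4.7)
The plan is to specialize the recursion of Theorem \ref{KL-exists} to $M = M_{m,d}$ and then extract the coefficient of $t^{d-i}$ rather than $t^i$; this choice will make the sign $(-1)^i$ and binomial $\binom{m+d}{i}$ in the stated formula appear directly from $\chi_{M_{m,d}}(t)$. Since every subset of size at most $d-1$ is independent, the flats of $M_{m,d}$ are $\emptyset$, every $\ell$-subset of $\cI$ for $1 \leq \ell \leq d-1$, and the top flat $\cI$. For a flat $F$ with $|F| = \ell < d$, the localization $(M_{m,d})_F$ is the Boolean matroid of rank $\ell$ and the restriction $M_{m,d}^F$ is isomorphic to $M_{m,d-\ell}$, while $F = \cI$ contributes exactly $\chi_{M_{m,d}}(t)$ (its restriction being rank $0$). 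Together with $\chi_{B_\ell}(t) = (t-1)^\ell$, this turns the defining recursion into
$$t^d P_{m,d}(t^{-1}) \;=\; P_{m,d}(t) \;+\; \sum_{\ell=1}^{d-1}\binom{m+d}{\ell}(t-1)^\ell P_{m,d-\ell}(t) \;+\; \chi_{M_{m,d}}(t).$$

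The next step is to extract $[t^{d-i}]$ from both sides, for $0 \leq i < d/2$. The degree bound $\deg P_{m,d} < d/2$ makes $t^d P_{m,d}(t^{-1})$ supported strictly above degree $d/2$ and $P_{m,d}(t)$ strictly below, so $[t^{d-i}]$ of the left-hand side is exactly $c^i_{m,d}$, while the first term on the right contributes zero. The expansion $\chi_{M_{m,d}}(t) = \sum_{\ell=0}^{d-1}(-1)^\ell\binom{m+d}{\ell}t^{d-\ell} + \mu(\emptyset,\cI)$ then immediately yields the term $(-1)^i\binom{m+d}{i}$. For the remaining sum, expanding $(t-1)^\ell = \sum_a\binom{\ell}{a}(-1)^{\ell-a}t^a$ and $P_{m,d-\ell}(t) = \sum_b c^b_{m,d-\ell}t^b$, then substituting $k = d-\ell$ (the rank of the restriction) and $j = b$, the trinomial identity $\binom{m+d}{d-k}\binom{d-k}{d-i-j} = \binom{m+d}{m+k,\,i+j-k,\,d-i-j}$ combined with the parity identity $(-1)^{i+j-k} = (-1)^{i+j+k}$ rewrites each surviving summand as $(-1)^{i+j+k}\binom{m+d}{m+k,\,i+j-k,\,d-i-j}c^j_{m,k}$, which is exactly the summand appearing in the statement.

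The claimed summation range $0 \leq j \leq i-1$, $2j+1 \leq k \leq i+j$ falls out of three support conditions on the surviving terms: $c^j_{m,k} = 0$ unless $k \geq 2j+1$ (from $\deg P_{m,k} < k/2$); the non-negativity $a = d-i-j \geq 0$; and $a \leq \ell = d-k$, equivalent to $k \leq i+j$. The hypothesis $i < d/2$ guarantees $i+j \leq 2i-1 < d-1$, so the upper bound $k \leq i+j$ always lies below the original cap $k \leq d-1$ and no truncation is needed. The main obstacle is purely bookkeeping — tracking signs across the substitution $\ell = d-k$, identifying the trinomial factorization cleanly, and confirming that the range in the statement is precisely what the three support conditions cut out — but no combinatorial identity beyond those just named is required, so the proposition reduces to a single application of the recursion.
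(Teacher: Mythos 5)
Your proof is correct, and it is exactly the argument the paper has in mind: the text immediately preceding the proposition identifies the localization $(M_{m,d})_F$ as Boolean and the restriction $M_{m,d}^F$ as $M_{m,d-\rk F}$, and then states the recursion without writing out the coefficient extraction, which is precisely the bookkeeping you carry out. Your substitution $k=d-\ell$, the trinomial factorization $\binom{m+d}{d-k}\binom{d-k}{d-i-j}=\binom{m+d}{m+k,\,i+j-k,\,d-i-j}$, the sign computation $(-1)^{\ell-a}=(-1)^{i+j+k}$, and the support analysis (giving $2j+1\le k\le i+j$, hence $j\le i-1$, with $i+j\le 2i-1\le d-2$ so the cap $k\le d-1$ never binds when $i<d/2$) all check out. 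The only thing worth flagging is that, as you note, the computation implicitly assumes $i<d/2$ so that the $P_{m,d}(t)$ term on the right contributes nothing to $[t^{d-i}]$; the paper says ``for any $i$'' but the formula is only meaningful in that range, which is the same convention the paper is using.
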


We can use Proposition \ref{uniform coefficient recursion} obtain explicit formulas for the first few coefficients.  
In general, the formula
for $c_{m,d}^i$ will be a signed sum of $(i+1)$-nomial coefficients, each with $m+d$ on top.
We omit the proof of Corollary \ref{uniform coefficients} because it is a straightforward application
of the proposition.

\begin{corollary}\label{uniform coefficients}
We have
\begin{eqnarray*}
c_{m,d}^0 &=& 1\\\\
c_{m,d}^1 &=& \tbinom{m+d}{m+1} -\tbinom{m+d}{1} \\\\
c_{m,d}^2 &=&
\tbinom{m+d}{m+1,d-3,2}
- \tbinom{m+d}{m+1,d-2,1}
+ \tbinom{m+d}{m+2,d-2,0}
- \tbinom{m+d}{m+2,d-3,1}
+ \tbinom{m+d}{2}\\\\
c_{m,d}^3 &=& 
\;\;\;\tbinom{m+d}{m+1,d-3,2,0}
- \tbinom{m+d}{m+1,d-4,2,1}
+ \tbinom{m+d}{m+1,d-4,3,0}
- \tbinom{m+d}{m+1,d-5,3,1}
+ \tbinom{m+d}{m+1,d-5,2,2}\\
&& - \tbinom{m+d}{m+2,d-3,1,0}
+ \tbinom{m+d}{m+2,d-4,1,1}
- \tbinom{m+d}{m+2,d-5,2,1}
+ \tbinom{m+d}{m+2,d-5,3,0}
+ \tbinom{m+d}{m+3,d-3,0,0}\\
&& - \tbinom{m+d}{m+3,d-4,1,0}
+ \tbinom{m+d}{m+3,d-5,2,0}
-\tbinom{m+d}{3}.
\end{eqnarray*}
\end{corollary}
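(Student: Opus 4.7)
The plan is to prove the four formulas by induction on $i$, at each step substituting the already-known expressions for $c^j_{m,k}$ (with $j < i$) into the right-hand side of the recursion from Proposition \ref{uniform coefficient recursion}, and then collapsing the resulting products of multinomial coefficients via the standard absorption identity $\binom{N}{a+b,\, c}\binom{a+b}{a,\, b} = \binom{N}{a,\, b,\, c}$, applied with $N = m+d$ throughout.

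The case $i = 0$ is immediate from Proposition \ref{constant term}, and is also consistent with the recursion since the double sum is empty. For $i = 1$, only the pair $(j,k) = (0,1)$ contributes in the double sum, so combined with the leading term $-\binom{m+d}{1}$ and the fact that $c^0_{m,1} = 1$, we recover $c^1_{m,d} = \binom{m+d}{m+1} - \binom{m+d}{1}$.

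For $i = 2$, the inner sum ranges over $(j,k) \in \{(0,1),\, (0,2),\, (1,3)\}$. The first two pairs use only $c^0 \equiv 1$ and produce the three terms in the corollary whose bottom row contains $d-2$. The pair $(1,3)$ requires substituting the just-proved $c^1_{m,3} = \binom{m+3}{m+1} - (m+3)$; the resulting two products, after applying the absorption identity, become exactly the two trinomial coefficients whose bottom row contains $d-3$. For $i = 3$, the analogous sum runs over the six pairs $(0,1), (0,2), (0,3), (1,3), (1,4), (2,5)$, into which one substitutes the explicit formulas for $c^0, c^1$, and the now-established five-term formula for $c^2_{m,5}$. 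After expanding every substitution and repeatedly collapsing products via the absorption identity, each summand becomes a single four-nomial coefficient with $m+d$ on top, and collecting like terms reproduces the thirteen-term formula in the statement.

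The sole obstacle is bookkeeping in the cubic case: one must carry the signs $(-1)^{i+j+k}$ through the substitution of a multi-term $c^j_{m,k}$ and verify that the resulting four-nomials neither cancel against one another nor combine unexpectedly. There is no conceptual content beyond Proposition \ref{uniform coefficient recursion} and the absorption identity, which is presumably why the authors omit the computation.
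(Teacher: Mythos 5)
Your proposal is correct and is exactly the computation the paper has in mind: the authors explicitly omit the proof, describing it as ``a straightforward application of the proposition,'' meaning Proposition~\ref{uniform coefficient recursion}, which is precisely what you carry out. Your enumeration of the contributing index pairs $(j,k)$ at each step and your use of the absorption identity $\binom{N}{a+b,\,c}\binom{a+b}{a,\,b}=\binom{N}{a,\,b,\,c}$ to collapse each product into a single $(i+1)$-nomial coefficient are the right and only ingredients, and the resulting terms agree with the stated formulas.
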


We can also express our recursion in terms of a generating function identity.
Let $$\Phi_m(t,u) = \sum_{d=1}^{\infty} P_{m,d}(t) u^d.$$

\begin{proposition}\label{uniform gf}
We have $$\Phi_m(t^{-1}, tu) = \frac{tu-u}{(1-tu+u)(1+u)^{m}}
+ \frac{1}{(1-tu+u)^{m+1}} \;\Phi_m\!\left(t,\frac{u}{1-tu+u}\right).$$
\end{proposition}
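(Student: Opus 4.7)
The plan is to unpack the recursion of Theorem \ref{KL-exists}(3) for $M = M_{m,d}$ and then package the resulting family of scalar recursions into the desired functional equation by taking generating functions in $d$.

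First, I would describe the lattice of flats of $M_{m,d}$. For $0 \le k < d$, every $k$-element subset of the ground set is a flat (there are $\binom{m+d}{k}$ of them), and the only remaining flat is the whole ground set, of rank $d$. For a flat $F$ of rank $k < d$, the localization $(M_{m,d})_F$ is Boolean of rank $k$, hence $\chi_{(M_{m,d})_F}(t) = (t-1)^k$, and the restriction $M_{m,d}^F$ is isomorphic to $M_{m, d-k}$ (independent subsets of $\cI \setminus F$ are exactly those of size at most $d - k$). The top flat contributes $\chi_{M_{m,d}}(t)\cdot 1$, since $M_{m,d}^{\cI}$ has rank zero. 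Theorem \ref{KL-exists}(3) thus specializes to
$$t^d P_{m,d}(t^{-1}) = \chi_{M_{m,d}}(t) + \sum_{k=0}^{d-1} \binom{m+d}{k}(t-1)^k P_{m,d-k}(t).$$

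Next I would multiply by $u^d$ and sum over $d \ge 1$. The left-hand side becomes $\Phi_m(t^{-1}, tu)$, and the $k = 0$ summand on the right contributes $\Phi_m(t,u)$. For the remaining terms I set $j = d - k$ and swap the order of summation; the inner sum in $k$ is evaluated via the standard identity $\sum_{k \ge 0}\binom{m+j+k}{k} y^k = (1-y)^{-(m+j+1)}$ with $y = (t-1)u$, and after removing the $k = 0$ term and pulling the $j$-independent factor out front, the double sum collapses to
$$\frac{1}{(1-(t-1)u)^{m+1}}\,\Phi_m\!\left(t,\ \frac{u}{1-(t-1)u}\right) - \Phi_m(t,u).$$
The $-\Phi_m(t,u)$ cancels the $k=0$ contribution, and rewriting $1-(t-1)u = 1 - tu + u$ yields
$$\Phi_m(t^{-1}, tu) = \sum_{d \ge 1}\chi_{M_{m,d}}(t)\, u^d \;+\; \frac{1}{(1-tu+u)^{m+1}}\,\Phi_m\!\left(t,\ \frac{u}{1-tu+u}\right).$$

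It then remains to verify $\sum_{d \ge 1}\chi_{M_{m,d}}(t)\, u^d = \tfrac{(t-1)u}{(1-(t-1)u)(1+u)^m}$. From the lattice just described, $\mu(\emptyset, F) = (-1)^k$ for every rank-$k$ flat with $k < d$, while $\mu(\emptyset, \hat 1) = -\sum_{k=0}^{d-1}(-1)^k\binom{m+d}{k}$, which gives the closed form $\chi_{M_{m,d}}(t) = \sum_{k=0}^{d-1}(-1)^k\binom{m+d}{k}(t^{d-k} - 1)$. Reindexing via $d = j + r$ with $j \ge 1,\ r \ge 0$ and applying $\sum_{r \ge 0}\binom{m+j+r}{r}(-u)^r = (1+u)^{-(m+j+1)}$, the generating function factors as $(1+u)^{-(m+1)}\sum_{j \ge 1}(t^j - 1)\, v^j$ with $v = u/(1+u)$. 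The geometric sum evaluates to $(t-1)v/((1-tv)(1-v))$, and back-substituting $v = u/(1+u)$ (so that $1 - v = (1+u)^{-1}$ and $1 - tv = (1-(t-1)u)/(1+u)$) collapses everything to the claimed closed form. The main obstacle is purely bookkeeping: the two nested reindexings and keeping track of the substitution $u \mapsto u/(1-(t-1)u)$ that connects the second summand to $\Phi_m$ evaluated at a shifted argument; no deeper matroid-theoretic input is needed beyond the explicit flat and Möbius data for the uniform matroid.
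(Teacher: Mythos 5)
Your argument is correct and follows essentially the same route as the paper's: specialize the recursion of Theorem~\ref{KL-exists}(3) to $M_{m,d}$, reindex, and apply the negative binomial identity $\sum_{\ell\ge 0}\binom{r+\ell}{\ell}x^\ell = (1-x)^{-(r+1)}$. The only cosmetic difference is that you treat the top-flat term $\sum_{d\ge 1}\chi_{M_{m,d}}(t)u^d$ as a separate geometric computation, whereas the paper writes $\chi_{M_{m,d}}(t)=\sum_{i=0}^d(-1)^i\binom{m+d}{i}(t^{d-i}-1)$ up front and reindexes both pieces in parallel.
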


\begin{proof}
Our defining recursion tells us that
\begin{eqnarray*}\Phi_m(t^{-1}, tu) &=& \sum_{d=1}^{\infty} P_{m,d}(t^{-1}) t^d u^d\\\\
&=& \sum_{d=1}^{\infty} \left[\sum_{i=0}^d (-1)^i\binom{m+d}{i} (t^{d-i}-1) + \sum_{k=1}^{d} 
\binom{m+d}{d-k} (t-1)^{d-k} P_{m,k}(t) \right]u^d.
\end{eqnarray*}
If we introduce new dummy indices $e = d-i$ and $f = d-k$, we may rewrite this equation as
$$\Phi_m(t^{-1}, tu) = \sum_{e=0}^{\infty}(t^{e}-1)u^e \sum_{i=0}^{\infty}\binom{m+e+i}{i}(-u)^i
+ \sum_{k=1}^\infty P_{m,k}(t)u^k\sum_{f=0}^\infty\binom{m+k+f}{f}(ut-u)^f.$$
Next, we recall that $$\sum_{\ell=0}^\infty\binom{r+\ell}{\ell}x^\ell = \frac{1}{(1-x)^{r+1}}.$$
We will use this formula with $r=m+e$ and $x = -u$, and then again with $r=m+k$ and $x = tu-u$, to get
\begin{eqnarray*}\Phi_m(t^{-1}, tu) &=& \sum_{e=0}^{\infty} \frac{(t^{e}-1)u^e}{(1+u)^{m+e+1}}
+ \sum_{k=1}^\infty \frac{P_{m,k}(t) u^k}{(1-tu+u)^{m+k+1}}\\\\
&=& \frac{1}{(1+u)^{m+1}}\sum_{e=0}^{\infty} \frac{(t^{e}-1)u^e}{(1+u)^{e}}
+ \frac{1}{(1-tu+u)^{m+1}}\sum_{k=1}^\infty P_{m,k}(t) \left(\frac{u}{1-tu+u}\right)^k\\\\
&=& \frac{tu-u}{(1-tu+u)(1+u)^{m}}
+ \frac{1}{(1-tu+u)^{m+1}} \;\Phi_m\!\left(t,\frac{u}{1-tu+u}\right).
\end{eqnarray*}
This completes the proof.
\end{proof}

\begin{remark}
A general formula for $c_{m,d}^i$ can be obtained from \cite[3.1]{GPY} and the ensuing remarks.
\end{remark}

\subsection{Braid matroids}
Let $M_n$ be the braid matroid of rank $n-1$; this is the matroid associated with the complete graph
on $n$ vertices, or with the braid arrangement (Example \ref{braid-OT}).
The lattice $L(M_n)$ is isomorphic to the lattice of set-theoretic partitions of the set $[n]$.  Let $P_n(t) = P_{M_n}(t)$.
Values of $P_n(t)$ for $n\leq 20$ appear in the appendix.

For any partition $\la$ of the number $n$, let 
$$m(\la) := \frac{n!}{\prod_{i=1}^{\ell(\la)}\la_i!\cdot\prod_{j=1}^{\la_1}(\la^t_j - \la^t_{j+1})!}$$
be the number of flats of type $\la$, where $\la^t$ denotes the transpose
partition and $\ell(\la)$ is the number of parts of $\la$.
For such a flat $F$, the localization $(M_n)_F$ is isomorphic to $M_{\la_1}\oplus\cdots\oplus M_{\la_{\ell(\la)}}$,
and has characteristic polynomial
$$\chi(t) = \prod_{i=1}^{\ell(\la)} (t-1)\cdots(t-\la_i+1) = \prod_{j=1}^{\la_1-1} (t-j)^{\la^t_{j+1}}.$$
The restriction $M_n^F$ is isomorphic (after simplification) to $M_{\ell(\la)}$.

The Whitney numbers of the $M_n$ can be interpreted in terms of Stirling numbers
of the first and second kind, respectively.
By definition,
$$s(n,k) := w_{0,n-k}\and S(n,k) := W_{0,n-k}.$$

\begin{lemma}\label{W-S}
For all $i\leq j$, $W_{i,j} = S(n,n-i)S(n-i,n-j)$.
\end{lemma}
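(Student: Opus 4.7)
The plan is to translate everything into the language of set-theoretic partitions and then perform an elementary bijective count. Recall that $L(M_n)$ is isomorphic to the partition lattice $\Pi_n$, with the partial order being refinement (so $E \leq F$ means $F$ is coarser than $E$). Under this isomorphism, a flat of rank $i$ corresponds to a partition of $[n]$ into exactly $n-i$ blocks, since $\rk E = n - (\text{number of blocks})$.

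With this dictionary in hand, $W_{i,j}$ counts pairs $(E,F)$ of partitions of $[n]$ with $E \leq F$ such that $E$ has $n-i$ blocks and $F$ has $n-j$ blocks. The first step is to factor this count as
\[
W_{i,j} \;=\; \sum_{E} \#\{F \geq E : F \text{ has } n-j \text{ blocks}\},
\]
where $E$ ranges over partitions of $[n]$ with $n-i$ blocks. By definition of the Stirling number of the second kind, the number of such $E$ is exactly $S(n, n-i)$.

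The second step is to observe that for a fixed $E$ with $n-i$ blocks, a coarsening $F \geq E$ with $n-j$ blocks is the same data as a partition of the set of blocks of $E$ (which has cardinality $n-i$) into exactly $n-j$ parts. Hence the inner count equals $S(n-i, n-j)$, and this count is independent of the choice of $E$. Substituting gives $W_{i,j} = S(n,n-i)\, S(n-i,n-j)$.

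There is no real obstacle here; the only thing to verify carefully is the rank convention (the lattice of $M_n$ has rank $n-1$ with the top element being the one-block partition of rank $n-1$, and the minimum being the partition into singletons of rank $0$), which ensures the block-count formula $\rk = n - \#\text{blocks}$ is correctly applied on both sides.
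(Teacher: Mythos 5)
Your proof is correct and follows essentially the same route as the paper: identify rank-$i$ flats with partitions of $[n]$ into $n-i$ blocks, count the $S(n,n-i)$ choices for $E$, and then count the $S(n-i,n-j)$ coarsenings of $E$ into $n-j$ blocks. The only difference is that you spell out the factorization of the double count more explicitly.
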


\begin{proof}
A flat of rank $i$ corresponds to a partition of $[n]$ into $n-i$ blocks, and there are $W_{0,i} = S(n,n-i)$
such flats.  For each such flat, a flat of rank $j$ lying above it corresponds to a partition of the set of blocks
into $n-j$ blocks, and there are $S(n-i,n-j)$ such flats.
\end{proof}

\begin{corollary}
The coefficient of $t$ in $P_n(t)$ is equal to $S(n,2)-S(n,n-1)$,
and the coefficient of $t^2$ is equal to $s(n,n-2)-S(n,n-1)S(n-1,2)+S(n,3)+S(n,4)$.
\end{corollary}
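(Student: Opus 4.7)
The plan is to plug $M = M_n$ into the general formulas for the first two non-constant coefficients, namely Propositions \ref{linear term} and \ref{quadratic term}, and then translate each doubly-indexed Whitney number into Stirling numbers using Lemma \ref{W-S} together with the definitions $s(n,k) = w_{0,n-k}$ and $S(n,k) = W_{0,n-k}$. Here $d = \rk M_n = n-1$, and no further input about the structure of the partition lattice is needed beyond what is already packaged in Lemma \ref{W-S}.

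For the linear coefficient, Proposition \ref{linear term} gives $W_{0,d-1} - W_{0,1} = W_{0,n-2} - W_{0,1}$. Lemma \ref{W-S} yields $W_{0,n-2} = S(n,n)\,S(n,2) = S(n,2)$ and $W_{0,1} = S(n,n)\,S(n,n-1) = S(n,n-1)$, so the linear coefficient is $S(n,2) - S(n,n-1)$.

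For the quadratic coefficient, Proposition \ref{quadratic term} gives
\[
w_{0,2} - W_{1,d-1} + W_{0,d-2} - W_{d-3,d-2} + W_{d-3,d-1}.
\]
The first three terms translate directly: $w_{0,2} = s(n,n-2)$ by definition, $W_{1,n-2} = S(n,n-1)\,S(n-1,2)$ by Lemma \ref{W-S}, and $W_{0,n-3} = S(n,n)\,S(n,3) = S(n,3)$. The only genuine computation is the collapse of the last two terms: by Lemma \ref{W-S}, $W_{n-4,n-3} = S(n,4)\,S(4,3) = 6\,S(n,4)$ and $W_{n-4,n-2} = S(n,4)\,S(4,2) = 7\,S(n,4)$, so $-W_{n-4,n-3} + W_{n-4,n-2} = S(n,4)$. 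Assembling these pieces yields the claimed expression.

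The proof is thus entirely bookkeeping; there is no real obstacle, only the small pleasant coincidence $S(4,2) - S(4,3) = 1$ that produces the clean $+S(n,4)$ at the end. As a sanity check, one can verify that substituting $n=3$ or $n=4$ into the quadratic formula gives $0$, consistent with the degree bound $\deg P_n(t) < (n-1)/2$.
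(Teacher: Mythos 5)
Your proof is correct and follows the same route as the paper: the paper's own proof is the one-liner ``This follows from Proposition \ref{linear term}, Proposition \ref{quadratic term}, and Lemma \ref{W-S}, along with the observation that $w_{0,2} = s(n,n-2)$,'' and your writeup simply fills in the same bookkeeping, including the collapse $S(4,2)-S(4,3)=7-6=1$ that produces the $+S(n,4)$ term.
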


\begin{proof}
This follows from Proposition \ref{linear term}, Proposition \ref{quadratic term}, and Lemma \ref{W-S},
along with the observation that $w_{0,2} = s(n,n-2)$.
\end{proof}

\begin{lemma}\label{Whitney product}
For any matroids $M$ and $M'$,
$$W_{i,j}(M\oplus M')=\sum_{k,\ell}W_{k,\ell}(M)W_{i-k,j-\ell}(M').$$
\end{lemma}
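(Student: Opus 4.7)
The plan is to unpack the definition of $W_{i,j}$ and exploit the product structure of $L(M\oplus M')$. Recall that $L(M\oplus M')$ is naturally isomorphic to the product poset $L(M)\times L(M')$, where a flat $(E_1,E_2)$ satisfies $(E_1,E_2)\leq(F_1,F_2)$ if and only if $E_1\leq F_1$ in $L(M)$ and $E_2\leq F_2$ in $L(M')$. Moreover, the rank function is additive: $\rk(E_1,E_2)=\rk E_1 + \rk E_2$. This is the only structural fact required.

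Using this, I would write
$$W_{i,j}(M\oplus M') = \sum_{\substack{(E_1,E_2)\leq(F_1,F_2) \\ \rk E_1+\rk E_2 = i \\ \rk F_1+\rk F_2 = j}} 1$$
and then stratify the sum by the pair $(k,\ell):=(\rk E_1,\rk F_1)$. Because the comparison $(E_1,E_2)\leq(F_1,F_2)$ decouples into independent conditions on the two factors, the inner sum factors as a product of the number of pairs $E_1\leq F_1$ in $L(M)$ with prescribed ranks $(k,\ell)$ and the number of pairs $E_2\leq F_2$ in $L(M')$ with prescribed ranks $(i-k,j-\ell)$. The first factor is $W_{k,\ell}(M)$ and the second is $W_{i-k,j-\ell}(M')$, yielding the desired identity.

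There is no genuine obstacle here: the content of the lemma is simply that the isomorphism $L(M\oplus M')\cong L(M)\times L(M')$ respects both the rank function and the order relation, so every counting question about comparable pairs of flats factors through the product. The only thing worth stating carefully is the additivity of rank, after which the rest is bookkeeping. (As a sanity check, one recovers the familiar convolution formula for the Whitney numbers of the second kind of a Boolean matroid by iterating this identity on $n$ copies of the rank-one matroid.)
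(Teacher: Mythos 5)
Your proof is correct and is exactly the same argument the paper gives, just spelled out: the paper's entire proof is the one-line observation that $L(M\oplus M')\cong L(M)\times L(M')$ as ranked posets, and your stratification by $(\rk E_1,\rk F_1)$ is precisely the bookkeeping that observation compresses.
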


\begin{proof}
This follows from the fact that $L(M\oplus M') = L(M)\times L(M')$ as ranked posets.
\end{proof} 

The following proposition, which may be derived from Proposition \ref{cubic term}, Lemma \ref{W-S},
and Lemma \ref{Whitney product},
expresses the cubic term of $P_n(t)$ in terms of Stirling numbers and binomial coefficients.
More generally, since any restriction of a braid matroid is another braid matroid and any localization
of a braid matroid is a direct sum of braid matroids, it would be possible to express every coefficient of
$P_n(t)$ in terms of Stirling numbers and binomial coefficients.

\begin{proposition}
The coefficient of $t^3$ in $P_n(t)$ is equal to 
\begin{eqnarray*}
s(n,n-3)&+&\sum_{\la\vdash n, \ell(\la)=4}
m(\la)\Big[ S(\la_1,\la_1-1)S(\la_1-1,\la_1-2)+S(\la_2,\la_2-1)S(\la_1,\la_1-1)\\
&&\qquad\qquad\qquad\;\; +S(\la_2,\la_2-1)S(\la_2-1,\la_2-2)-S(\la_1,\la_1-2)-S(\la_2,\la_2-2)\Big]\\\\
&-&S(n,n-1)S(n-1,3)+S(n,4)\\ \\
&+&\sum\limits_{\la\vdash n, \ell(\la)=4}m(\la)\left( \tbinom{\la_1}{2}+\tbinom{\la_2}{2}+\tbinom{\la_3}{2}+\tbinom{\la_4}{2}\right)\\\\
&+&5S(n,5)+15S(n,6).
\end{eqnarray*} 
\end{proposition}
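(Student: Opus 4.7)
The plan is to substitute $M = M_n$ into the formula of Proposition \ref{cubic term} and translate each term into Stirling numbers and partition sums, using Lemma \ref{W-S} together with Lemma \ref{Whitney product} (and its evident variant for $w$-Whitney numbers, which follows from the fact that the M\"obius function of a product of posets is the product of the M\"obius functions). This is a purely bookkeeping argument; no new tools are needed.

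First I would dispatch the five ``global'' Whitney-number terms $w_{0,3}$, $-W_{d-4,d-3}$, $W_{d-4,d-1}$, $-W_{1,d-2}$, $W_{0,d-3}$ by applying the identities $s(n,k) = w_{0,n-k}$ and $W_{i,j}(M_n) = S(n,n-i)\,S(n-i,n-j)$ with $d = n-1$. These produce $s(n,n-3)$, $S(n,4)$, $-S(n,n-1)S(n-1,3)$, and, after combining the two $W_{d-4,\bullet}$ terms, $S(n,5)\bigl(S(5,2) - S(5,4)\bigr) = 5\,S(n,5)$, accounting for the four ``plain'' summands in the statement.

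Next I would process the three sums in Proposition \ref{cubic term} indexed by flats of ranks $d-1$, $d-3$, and $d-5$. The key observation is that a flat of rank $d-k$ in $M_n$ corresponds to a set partition of $[n]$ into $k+1$ blocks, so these sums reorganize as sums over $\la \vdash n$ with $\ell(\la) = 2, 4, 6$ respectively, each term weighted by $m(\la)$. For a flat $F$ of type $\la$, the localization $(M_n)_F \cong M_{\la_1} \oplus \cdots \oplus M_{\la_{\ell(\la)}}$, so its Whitney numbers (both $w$ and $W$) decompose via Lemma \ref{Whitney product} into sums of products of Stirling numbers in the individual $\la_i$. The restriction $M_n^F$ simplifies to $M_{\ell(\la)}$, whose Whitney numbers are again Stirling numbers by Lemma \ref{W-S}. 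For the $\ell(\la) = 6$ sum, the bracketed integrand depends only on the fixed matroid $M_6$, so it reduces to a numerical constant times $\sum_{\ell(\la)=6} m(\la) = S(n,6)$; direct evaluation using $W_{i,j}(M_6)$ yields $15\,S(n,6)$.

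The main obstacle is the careful bookkeeping in the remaining two $\la$-indexed sums, where the integrand genuinely varies with $\la$. One must correctly expand $W_{0,1}(M_F)$, $W_{0,2}(M^F)$, and the higher-order Whitney number contributions across a direct sum with up to four components, and then repackage the resulting Stirling and binomial expressions into the form given in the statement without dropping or duplicating any summand. No conceptual step is subtle; the challenge is entirely combinatorial hygiene, which is why the authors state the result without proof.
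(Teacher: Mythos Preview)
Your proposal is correct and follows exactly the approach the paper indicates: the authors explicitly state that the proposition ``may be derived from Proposition~\ref{cubic term}, Lemma~\ref{W-S}, and Lemma~\ref{Whitney product},'' and they omit the details as pure bookkeeping. Your identifications of the global Whitney terms (yielding $s(n,n-3)$, $S(n,4)$, $-S(n,n-1)S(n-1,3)$, and $5\,S(n,5)$) and of the $\ell(\lambda)=6$ constant (yielding $15\,S(n,6)$) are correct, and the remaining repackaging of the $\ell(\lambda)=2$ and $\ell(\lambda)=4$ sums is, as you say, combinatorial hygiene.
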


Finally, we express the recursion for the polynomials $P_n(t)$ as a generating function identity, 
just as we did for uniform matroids.
Let $$\Psi(t,u) = \sum_{n=1}^\infty P_n(t) u^{n-1}.$$
For any partition $\nu$ (of any number), let $\tilde\nu$ be the partition of $|\nu|+\ell(\nu)$
obtained by adding 1 to each of the parts of $\nu$.

\begin{proposition}\label{braid gf}
We have
$$\Psi(t^{-1}, tu) = \sum_{\nu} m(\tilde\nu)u^{|\tilde\nu|-1}\prod_{j=1}^{\nu_1} (t-j)^{\nu^t_j}\cdot
\frac{\partial_u^{|\tilde\nu|}}{|\tilde\nu|!}
\left(u^{|\nu|+1}\Psi(t,u)\right),$$
where the sum is over all partitions $\nu$ of any size.
\end{proposition}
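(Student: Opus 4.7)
The plan is to specialize the master recursion from Theorem \ref{KL-exists} to the braid matroid $M_n$, multiply by $u^{n-1}$, sum over $n\geq 1$, and then recognize the resulting formal power series identity as the claimed formula. Using the description of the lattice of flats of $M_n$ as set partitions of $[n]$, together with the stated formulas for $(M_n)_F$ and $M_n^F$, the recursion becomes
\[
t^{n-1}P_n(t^{-1}) \;=\; \sum_{\lambda\vdash n} m(\lambda)\prod_{j=1}^{\lambda_1-1}(t-j)^{\lambda^t_{j+1}}\,P_{\ell(\lambda)}(t).
\]
Summing against $u^{n-1}$ immediately rewrites the left side as $\Psi(t^{-1},tu)$. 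The job is then to reorganize the right side so that the sum over partitions $\lambda\vdash n$ becomes a sum over all partitions $\nu$, where the residual dependence on $n$ gets packaged into a derivative.

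The key combinatorial step is the bijection $\lambda\longleftrightarrow (\nu,k)$ defined by separating the parts of $\lambda$ that equal $1$ from the rest: writing $\lambda=\tilde\nu\cup 1^k$ where $\tilde\nu$ collects the parts $\geq 2$, we see that $k=n-|\tilde\nu|$ and $\ell(\lambda)=\ell(\nu)+k=n-|\nu|$. I then need three identities under this bijection: first, the product identity
\[
\prod_{j=1}^{\lambda_1-1}(t-j)^{\lambda^t_{j+1}} \;=\; \prod_{j=1}^{\nu_1}(t-j)^{\nu^t_j},
\]
which follows from the direct computation $\lambda^t_{j+1}=\#\{i:\lambda_i\geq j+1\}=\#\{i:\nu_i\geq j\}=\nu^t_j$ for $j\geq 1$; second, that $P_{\ell(\lambda)}(t)=P_{n-|\nu|}(t)$; and third, the multiplicative identity $m(\lambda)=\binom{n}{|\tilde\nu|}\,m(\tilde\nu)$, which I would verify using the formula for $m(\lambda)$ in terms of multiplicities of parts (choose the $|\tilde\nu|$ elements forming non-singleton blocks in $\binom{n}{|\tilde\nu|}$ ways, distribute them into blocks in $m(\tilde\nu)$ ways, and note the remaining singleton blocks contribute uniquely).

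With these identities in hand, the right-hand side becomes
\[
\sum_{n\geq 1}u^{n-1}\sum_{\nu:\;|\tilde\nu|\leq n} m(\tilde\nu)\binom{n}{|\tilde\nu|}\prod_{j=1}^{\nu_1}(t-j)^{\nu^t_j}\,P_{n-|\nu|}(t).
\]
I now swap the order of summation and, for each fixed $\nu$, introduce the dummy $m=n-|\nu|$, converting the tail into $\sum_{m\geq 1}\binom{|\nu|+m}{|\tilde\nu|}P_m(t)\,u^{|\nu|+m-1}$. To recognize this as a derivative, I invoke the elementary formal identity
\[
\frac{\partial_u^{|\tilde\nu|}}{|\tilde\nu|!}\,u^{|\nu|+m} \;=\; \binom{|\nu|+m}{|\tilde\nu|}\,u^{m-\ell(\nu)},
\]
so that after multiplying by the appropriate shift $u^{|\tilde\nu|-1}$ (which absorbs the $\ell(\nu)$ and relates $|\nu|+m-1$ to $|\tilde\nu|-1+(m-\ell(\nu))$), the inner sum is exactly
\[
u^{|\tilde\nu|-1}\,\frac{\partial_u^{|\tilde\nu|}}{|\tilde\nu|!}\bigl(u^{|\nu|+1}\Psi(t,u)\bigr).
\]
This matches the stated formula term by term.

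The only real obstacle is the bookkeeping at boundary cases, namely making sure that (i) the empty partition $\nu=\emptyset$ correctly accounts for the term $\lambda=1^n$ (it contributes $m(\emptyset)=1$, an empty product, and $P_n(t)$), (ii) partitions $\nu$ with $|\tilde\nu|>n$ automatically give zero through the binomial coefficient $\binom{n}{|\tilde\nu|}$, and (iii) the derivative identity holds as a formal manipulation on the relevant truncations. None of these are deep, but the proof must be careful not to double-count parts equal to $1$ when translating between the product over $j$ indexed by $\lambda_1-1$ and the product indexed by $\nu_1$, since these upper limits genuinely differ when $\lambda$ has only singleton blocks.
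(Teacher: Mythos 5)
Your proof is correct and follows essentially the same route as the paper: strip the singleton parts of $\lambda$ to obtain $\tilde\nu$, rewrite $m(\lambda)$, the characteristic-polynomial product, and $P_{\ell(\lambda)}$ under this splitting, then recognize the surviving binomial as a divided derivative acting on $u^{|\nu|+1}\Psi(t,u)$. Your dummy index $m=n-|\nu|=\ell(\lambda)$ is just a shift of the paper's $k$ (number of singleton blocks, $m=k+\ell(\nu)$), and your boundary observation that $\binom{n}{|\tilde\nu|}$ kills the terms with $m<\ell(\nu)$ plays the same role as the paper's implicit use of $\partial_u^{|\tilde\nu|}$ annihilating monomials of degree below $|\tilde\nu|$.
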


\begin{proof}
Our defining recursion tells us that
\begin{eqnarray*}
\Psi(t^{-1}, tu) = \sum_{n=1}^\infty P_n(t^{-1}) t^{n-1} u^{n-1}
&=& \sum_{n=1}^\infty\left[\sum_{\la\vdash n} m(\la) P_{\ell(\la)}(t)\prod_{j=1}^{\la_1-1} (t-j)^{\la^t_{j+1}}\right] u^{n-1}\\\\
&=& \sum_{\la} m(\la) P_{\ell(\la)}(t) u^{|\la|-1}\prod_{j=1}^{\la_1-1} (t-j)^{\la^t_{j+1}}.
\end{eqnarray*}
(We adopt the convention that $P_0(t)=0$
so that the empty partition contributes nothing to the sum.)
For any partition $\nu$, let 
$\tilde\nu_k$ be the partition obtained by adding $k$ new parts of size 1 to $\tilde\nu$.
We will replace the sum over $\la$ with a sum over $\nu$ and $k$, with $\la = \tilde\nu_k$.
Note that we have $$m(\la) = \binom{|\tilde\nu_k|}{|\tilde\nu|}m(\tilde\nu),
\qquad \ell(\la) = \ell(\nu)+k,
\qquad \la_1 - 1 = \nu_1,
\and \la^t_{j+1} = \nu^t_j,$$
thus we can rewrite our equation as
\begin{eqnarray*}
\Psi(t^{-1}, tu) &=& \sum_{\nu}\sum_{k=0}^\infty \binom{|\tilde\nu_k|}{|\tilde\nu|}m(\tilde\nu)
P_{\ell(\nu)+k}(t)u^{|\tilde\nu_k|-1}
\prod_{j=1}^{\nu_1} (t-j)^{\nu^t_j}\\\\
&=& \sum_{\nu} m(\tilde\nu)u^{|\nu|}\prod_{j=1}^{\nu_1} (t-j)^{\nu^t_j}\cdot\sum_{k=0}^\infty
\binom{|\tilde\nu_k|}{|\tilde\nu|}
P_{\ell(\nu)+k}(t)u^{\ell(\nu)+k-1}.
\end{eqnarray*}
Next, we observe that 
$$\binom{|\tilde\nu_k|}{|\tilde\nu|} u^{\ell(\nu)+k-1} 
= u^{\ell(\nu)-1}\frac{\partial_u^{|\tilde\nu|}}{|\tilde\nu|!}u^{|\tilde\nu_k|},$$
so
\begin{eqnarray*}
\Psi(t^{-1}, tu) &=&
\sum_{\nu} m(\tilde\nu)u^{|\nu|}\prod_{j=1}^{\nu_1} (t-j)^{\nu^t_j}\cdot\sum_{k=0}^\infty
u^{\ell(\nu)-1}\frac{\partial_u^{|\tilde\nu|}}{|\tilde\nu|!}u^{|\tilde\nu_k|}
P_{\ell(\nu)+k}(t)\\\\
&=& \sum_{\nu} m(\tilde\nu)u^{|\tilde\nu|-1}\prod_{j=1}^{\nu_1} (t-j)^{\nu^t_j}\cdot
\frac{\partial_u^{|\tilde\nu|}}{|\tilde\nu|!}
\left(u^{|\nu|+1}\sum_{k=0}^\infty u^{\ell(\nu)+k-1}
P_{\ell(\nu)+k}(t)\right)\\\\
&=& \sum_{\nu} m(\tilde\nu)u^{|\tilde\nu|-1}\prod_{j=1}^{\nu_1} (t-j)^{\nu^t_j}\cdot
\frac{\partial_u^{|\tilde\nu|}}{|\tilde\nu|!}
\left(u^{|\nu|+1}\Psi(t,u)\right).
\end{eqnarray*}
This completes the proof.
\end{proof}

\excise{
\subsection{Some non-representable matroids}
The V\'amos matroid $M$ is the smallest non-representable matroid; the ground set has cardinality 8,
the matroid has rank 4, and all circuits have cardinality 4 or 5.  
There are 5 circuits of cardinality 4, each of which is a flat of rank 3.
All other flats of rank 3 have cardinality 3, and the number of them is 
$\binom{8}{3} - 5 \times \binom{4}{3} = 36$.  There are 8 flats of rank 1, so the coefficient of $t$
in the Kazhdan-Lusztig polynomial of $M$ is $36 - 8 = 28$.  Thus $P_M(t) = 1 + 28t$.
\nicktodo{Let's see some examples where we have large enough coefficients that we don't {\em a priori}
know that they will be non-negative.}
}

\section{Geometry}\label{sec:geometry}
In this section we give a cohomological interpretation of the polynomial $P_M(t)$ whenever the matroid $M$ is representable; this interpretation is analogous to the interpretation of Kazhdan-Lusztig polynomials
associated to Weyl groups as local intersection cohomology groups of Schubert varieties \cite{KL80}. 
In particular, we prove Conjecture \ref{positivity} for representable matroids.

\subsection{The reciprocal plane}\label{sec:xa}
Let $k$ be a field.  An {\bf arrangement} $\cA$ over $k$ is a triple $(\cI, V, a)$,
where $\cI$ is a finite set, $V$ is a finite dimensional vector space over $k$, and $a$ is a map
from $\cI$ to $V^*\smallsetminus\{0\}$ such that the image of $a$ spans $V^*$.
Let $$U_\cA := \{v\in V\mid \langle a(i), v\rangle\neq 0\;\text{for all $i\in\cI$}\};$$
this variety is called the {\bf complement} of $\cA$.
We have a natural inclusion of $U_\cA$ into $(k^\times)^\cI$ whose $i^\text{th}$ coordinate is given by $a(i)$.
Consider the involution of $(k^\times)^\cI$ obtained by inverting every coordinate, and let $U_\cA^{-1}$ be the image
of $U_\cA$ under this involution.  The {\bf reciprocal plane} $X_\cA$ is defined to be the closure of $U_\cA^{-1}$
inside of $k^\cI$.  Its coordinate ring $k[X_\cA]$ is isomorphic to the subalgebra of $k(V)$ generated by 
$\{a(i)^{-1}\mid i\in\cI\}$; this ring is called the {\bf Orlik-Terao algebra}.

Consider the polynomial ring $k[u]_\cI$ with generators $\{u_i\mid i\in \cI\}$.  For all $S\subset \cI$,
let $$u_S := \prod_{i\in S}u_i.$$
Consider the surjective map $\rho:k[u]_\cI\to k[X_\cA]$ taking $u_i$ to $a(i)^{-1}$.
Suppose that $c\in k^\cI$ has the property that $\sum c_i a(i) = 0$;
we call such a vector a {\bf dependency} for $\cA$. 
Let $S_c := \{i\in\cI\mid c_i\neq 0\}$ be the support of $c$, and for all $i\in S_c$, let $S_c^i = S_c\smallsetminus\{i\}$.  
Then we obtain an element
$$f_c(u) := \sum_{i\in S_c}c_i u_{S_c^i} \in \ker(\rho).$$
Indeed, if we take the polynomials $f_c$ associated to vectors $c$ of minimal support,
we obtain a universal Gr\"obner basis for the kernel of $\rho$ \cite[Theorem 4]{PS}.
Note that the kernel of $\rho$ is a homogeneous ideal, thus inducing a grading on $k[X_\cA]$.

Let $M_\cA$ be the matroid with ground set $\cI$ consisting of subsets of $\cI$ on which $a$ is injective
with linearly independent image.  We say that $\cA$ {\bf represents} $M_\cA$ over $k$.
Given a flat $F$, let $\cI^F = \cI\smallsetminus F$ and $\cI_F = F$.  Let
$$V^F := \operatorname{Span}\{a(i)\mid i\in F\}^\perp \subset V\and V_F := V/V^F,$$
and consider the natural maps 
$$a^F:\cI^F\to (V^F)^*\and a_F:\cI_F\to V_F^*.$$
We define the {\bf restriction} $\cA^F := (\cI^F, V^F, a^F)$ and the {\bf localization} $\cA_F := (\cI_F, V_F, a_F)$.
Then we have
$$M_{\cA^F} = M^F\and M_{\cA_F} = M_F.$$

For any subset $F\subset \cA$, let $X_{\cA,F}$ be the subvariety of $X_\cA\subset k^\cI$ 
consisting of points whose $i^\text{th}$ coordinate vanishes if and only if $i\notin F$.
The following result is proved in \cite[Proposition 5]{PS}.

\begin{proposition}\label{strat}
The subvariety $X_{\cA,F}\subset X_\cA$ is nonempty if and only if $F$ is a flat, in which case it is isomorphic to $U_{\cA_F}$, and its closure is isomorphic to $X_{\cA_F}$.
\end{proposition}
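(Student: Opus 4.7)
The plan is to exploit the Orlik-Terao relations $f_c$ that cut out $X_\cA$ together with an explicit parametrization of each stratum. For the emptiness direction, if $F$ is not a flat then some $i$ lies in the matroid closure of $F$ but not in $F$, so $a(i) = \sum_{j\in F}\la_j a(j)$ is a nontrivial combination. This yields a dependency $c$ of $\cA$ with $i\in S_c$ and $S_c\smallsetminus\{i\}\subset F$. Evaluating $f_c(x)=0$ at a hypothetical point $x\in X_{\cA,F}$, every term except the one indexed by $i$ contains the factor $x_i=0$; what remains is $c_i\prod_{j\in S_c\smallsetminus\{i\}} x_j = 0$, contradicting $x_j\neq 0$ for $j\in F$. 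Hence $X_{\cA,F}$ is empty unless $F$ is a flat.

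When $F$ is a flat, I will define $\Phi:U_{\cA_F}\to k^\cI$ by $\Phi(v)_i=1/\langle a_F(i),v\rangle$ for $i\in F$ and $\Phi(v)_i=0$ otherwise; this is well-defined on $V_F=V/V^F$ because $a(i)$ annihilates $V^F$ for $i\in F$. To see $\Phi(v)\in X_\cA$, lift $v$ to $\tilde v\in V$ and choose $w\in V^F$ with $\langle a(i),w\rangle\neq 0$ for every $i\notin F$; this is possible because $F$ being a flat means $a(i)\notin V_F^*=(V^F)^\perp$ for such $i$, so the bad $w$ form a finite union of proper subspaces of $V^F$. Along the curve $\tilde v+t^{-1}w$ in $V$, the reciprocal of $\langle a(i),\cdot\rangle$ is the constant $\Phi(v)_i$ for $i\in F$ and tends to $0$ like $t$ for $i\notin F$; thus $\Phi(v)$ is a limit of points of $U_\cA^{-1}$ and so lies in $X_\cA$.

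Next I will show $\Phi$ is a bijection onto $X_{\cA,F}$ with algebraic inverse. Injectivity is immediate because $\{a_F(i):i\in F\}$ spans $V_F^*$. For surjectivity, given $x\in X_{\cA,F}$ set $y_i=1/x_i$ for $i\in F$; the tuple $(y_i)$ comes from some $v\in V_F$ via $a_F$ iff it is annihilated by every linear dependency of $\{a_F(i)\}_{i\in F}$. Such a dependency $c'$ extends by zero to a dependency $c$ of $\cA$ supported in $F$, and dividing the relation $f_c(x)=0$ by $\prod_{j\in S_c}x_j$ (nonzero) yields exactly $\sum c'_i y_i=0$; the resulting $v$ automatically lies in $U_{\cA_F}$ and satisfies $\Phi(v)=x$. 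The closure statement is then immediate: $X_{\cA,F}$ sits in the coordinate subspace $k^F\subset k^\cI$, and $\Phi$ identifies it with $U_{\cA_F}^{-1}\subset k^F$, whose closure is by definition $X_{\cA_F}$. The main obstacle in this plan is the realization step in the second paragraph: one must exhibit $\Phi(v)$ as a limit of honest points of $U_\cA^{-1}$ rather than merely verifying the relations $f_c$, and the one-parameter family $\tilde v+t^{-1}w$ accomplishes this cleanly.
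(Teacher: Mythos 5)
The paper itself does not prove Proposition~\ref{strat}; it cites \cite[Proposition 5]{PS}. So there is no ``paper proof'' to compare against, and your argument must stand on its own.

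Your overall plan is correct, and the emptiness direction, the injectivity and surjectivity of $\Phi$, and the closure argument all check out. The one step I would push back on is the existence of $w\in V^F$ with $\langle a(i),w\rangle\neq 0$ for all $i\notin F$. You justify this by noting that the bad $w$ form a finite union of proper subspaces of $V^F$; but that only guarantees a good $w$ when the ground field is infinite, whereas the paper's main applications take $k$ to be a finite field $\Fq$, where a vector space \emph{can} be a finite union of proper subspaces. This can be patched (base change to $\bar k$ and descend the scheme-theoretic statements), but as written there is a gap.

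Ironically, what you dismiss as ``merely verifying the relations $f_c$'' is both cleaner and gap-free. Since $k[X_\cA]=k[u]_\cI/\ker\rho$ and $\ker\rho$ is generated by the $f_c$ with $c$ of minimal support, it suffices to check $f_c(\Phi(v))=0$ for circuits $S_c$. If $|S_c\cap(\cI\smallsetminus F)|\geq 2$, every monomial in $f_c$ contains some $u_i$ with $i\notin F$ and hence vanishes at $\Phi(v)$. If $S_c\subset F$, divide by $\prod_{j\in S_c}\Phi(v)_j$ and you get $\langle\sum_i c_i a_F(i),v\rangle=0$, which holds because $c$ is a dependency. The remaining case $|S_c\cap(\cI\smallsetminus F)|=1$ cannot occur when $F$ is a flat, by exactly the matroid-closure argument you used in your emptiness direction. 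This verification is field-independent, produces $\Phi$ directly as a morphism of $k$-schemes (rather than as a pointwise map on $\bar k$-points whose algebraicity then has to be re-argued), and needs no choice of $w$. I would recommend replacing the second paragraph of your proof with this direct check.
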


\begin{example}\label{braid-OT}
Let $V = k^n/k_\Delta$, and let $\cA$ be the {\bf braid arrangement} consisting of all linear functionals of the form $x_i-x_j$, where $i<j$.
Flats of $\cA$ correspond to set-theoretic partitions of $[n]$; the restrictions $\cA^F$ are smaller braid arrangements (with multiplicities),
while the localizations $\cA_F$ are products of smaller braid arrangements.

The complement $U_\cA$ is the set of distinct ordered $n$-tuples of points in $k$ up to simultaneous translation.  In the closure of $U_\cA$, 
distances between points may go to zero (that is, the points are allowed to collide).  When they do, you see the complement of a restriction of $\cA$.
In the closure of $U_\cA^{-1}$, distances between points may go to infinity,
which means that our set of $n$ points may split into a disjoint union of smaller sets, each of which lives in a ``far away" copy of $k$.
When they do, you see the complement of a localization of $\cA$.
\end{example}

\subsection{Local geometry of the reciprocal plane}\label{sec:local}
For any flat $F$ of $\cA$,  let $W_{\!\cA,F}\subset X_\cA$ be the open subvariety 
defined by the nonvanishing of $u_i$ for all $i\in F$.  Equivalently, $W_{\!\cA,F}$ is the preimage of
$X_{\cA,F}$ along the canonical projection $\pi:X_\cA\to X_{\cA_F}$ given by setting the coordinates
in $\IF$ to zero.
The following theorem will be the main ingredient in our proof of Theorem \ref{comb=geom},
which gives a cohomological interpretation of the Kazhdan-Lusztig polynomial of a representable matroid.
It says roughly that the reciprocal plane $X_{\!\cA^F}$ associated to the restriction $\cA^F$ is an ``\'etale slice"
to the stratum $X_{\cA,F}\subset X_\cA$.

\begin{theorem}\label{etale}
Let $F$ be a flat of $\cA$ and let $x\in X_{\cA,F}\subset X_\cA$.  Then there exists 
an open subscheme $\becircled W_{\!\cA,F}\subset W_{\!\cA,F}$ containing $x$
and a map $\Phi:\becircled W_{\!\cA,F}\to X_{\!\cA^F}\times X_{\cA,F}$
such that $\Phi(x) = (0,x)$ and $\Phi$ is \'etale at $x$.
\end{theorem}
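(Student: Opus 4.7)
The plan is to construct $\Phi$ explicitly from an auxiliary linear splitting of $V$; in fact the resulting $\Phi$ will be a Zariski-local isomorphism near $x$ (an open immersion), which is stronger than the claimed \'etaleness at $x$.

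First I would choose a linear complement $V_F' \subset V$ to $V^F$, so that $V = V^F \oplus V_F'$ and each $v \in V$ decomposes uniquely as $v = v^F + v_F'$. For $j \in F$, $a(j)$ vanishes on $V^F$, so $a(j)(v) = a(j)(v_F')$, and the collection $\{a(j)|_{V_F'}\}_{j \in F}$ spans $(V_F')^*$. For $i \in \cI^F$, one has $a(i)(v) = a^F(i)(v^F) + b_i(v_F')$, where $a^F(i) := a(i)|_{V^F}$ (nonzero since $F$ is a flat, so $a(i) \notin \Span\{a(j) : j \in F\}$) and $b_i := a(i)|_{V_F'}$. Using the spanning property above, each $b_i$ can be written as a rational function of the coordinates $(u_j)_{j \in F}$, and is regular at $x$ because $u_j(x) \neq 0$ for $j \in F$.

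Now let $\becircled W_{\!\cA,F} \subset W_{\!\cA,F}$ be the open locus where $1 - b_i u_i \neq 0$ for every $i \in \cI^F$; this is a neighborhood of $x$ because $u_i(x) = 0$ for those $i$. On it, define
$$\Phi(u) := \bigl((\tilde u_i)_{i \in \cI^F},\; (u_j)_{j \in F}\bigr), \qquad \tilde u_i := \frac{u_i}{1 - b_i u_i}.$$
Immediately $\Phi(x) = (0,x)$. On the dense open $U_\cA \subset W_{\!\cA,F}$, a direct computation using $u_i = 1/(a^F(i)(v^F) + b_i(v_F'))$ yields $\tilde u_i = 1/a^F(i)(v^F)$, which are exactly the Orlik-Terao generators of $X_{\!\cA^F}$ evaluated at $v^F$. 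Hence all Orlik-Terao relations for $\cA^F$ hold among the $\tilde u_i$ on $U_\cA$, and by irreducibility of $X_\cA$ (the closure of the irreducible $U_\cA^{-1}$) these polynomial relations persist throughout $\becircled W_{\!\cA,F}$. Since $(u_j)_{j \in F} \in X_{\cA,F}$ by the very definition of $W_{\!\cA,F}$, this shows $\Phi$ lands in $X_{\!\cA^F} \times X_{\cA,F}$.

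To conclude \'etaleness at $x$, I would exhibit an explicit inverse
$$\Psi\bigl((v_i)_{i \in \cI^F}, (u_j)_{j \in F}\bigr) := \bigl(\,v_i/(1 + b_i v_i)\text{ for } i \in \cI^F,\; u_j \text{ for } j \in F\,\bigr),$$
defined on the open locus $\{1 + b_i v_i \neq 0\} \subset X_{\!\cA^F} \times X_{\cA,F}$. The same density argument shows $\Psi$ lands in $\becircled W_{\!\cA,F}$, and the elementary identity $v = u/(1-bu) \Leftrightarrow u = v/(1+bv)$ gives $\Phi \circ \Psi = \mathrm{id}$ and $\Psi \circ \Phi = \mathrm{id}$ as rational maps. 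Hence $\Phi$ is an open immersion near $x$, a fortiori \'etale. The main obstacle I anticipate is verifying that $\Phi$ actually factors through $X_{\!\cA^F} \times X_{\cA,F}$ once extended from $U_\cA$; the irreducibility-and-density shortcut handles this cleanly, but depends crucially on $F$ being a flat, which is what guarantees $a^F(i) \neq 0$ for $i \in \cI^F$ and hence that the decomposition $a(i) = a^F(i) + b_i$ is non-degenerate.
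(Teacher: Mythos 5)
Your construction is the same as the paper's --- the splitting of $V$ over $V^F$, the resulting regular functions $b_i$ on $W_{\cA,F}$, the open locus where $1 - b_i u_i$ is invertible, and the formula $\tilde u_i = u_i/(1-b_iu_i)$ --- but the verification runs along different lines in two places, and ends up proving a stronger statement. (1) To show that $\Phi$ factors through $X_{\cA^F} \times X_{\cA,F}$, the paper performs a direct algebraic computation, using the identity $\sum_{i\in F}c_iu_i^{-1} + \sum_{j\notin F}c_jb_j = 0$ to show that $\varphi(f_{\bar c}) = f_c/(u_{S_c\cap F}\prod(1-b_ku_k))$ vanishes; you instead identify $\tilde u_i$ with the Orlik--Terao generator $1/a^F(i)(v^F)$ on the dense open $U_\cA^{-1}$ and let irreducibility of $X_\cA$ propagate the relations. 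Both work; the paper's is more explicit, yours is shorter but leans on the (true) facts that $X_\cA$ is geometrically irreducible and reduced, which you should state rather than leave implicit. (2) For \'etaleness, the paper compares tangent cones and invokes \cite[Theorem 24]{SSV}; you instead write down the rational inverse $\Psi(v,u) = (v_i/(1+b_iv_i), u)$, check via the identity $(1-b_iu_i)(1+b_iv_i)=1$ that $\Phi$ and $\Psi$ restrict to mutually inverse regular maps between $\becircled W_{\cA,F}$ and the open $W' = \{1+b_iv_i\ne 0\}$, and conclude that $\Phi$ is an open immersion. This is genuinely stronger than the paper's \'etale-at-$x$, avoids the citation to SSV, and in fact recovers the SSV tangent cone result as a corollary. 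One small caution: when you say $b_i$ ``is regular at $x$,'' what you actually need (and have, since $b_i$ is a linear combination of the $u_j^{-1}$ with $j\in F$ and those $u_j$ are nowhere zero on $X_{\cA,F}$) is that $b_i$ is regular on all of $W_{\cA,F}$; otherwise the domain of $\Phi$ wouldn't be a Zariski open containing $x$.
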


\begin{proof}
Consider the natural projection from $V$ to $V_F$, and choose a splitting $\sigma:V_F\to V$
of this projection.  Let $\iota:X_{\cA,F}\to U_{\cA_F}$ be the isomorphism mentioned in Proposition \ref{strat}.
Concretely, $X_{\cA,F}$ and $U_{\cA_F}$ are both subschemes of $(k^\times)^F$, and $\iota$ is given
by inverting all of the coordinates.
For all $j\in \IF$, let $$b_j := \pi^*\iota^*\sigma^* u_j \in k[W_{\!\cA,F}].$$
Here we regard $u_j\in k[u]_\cI$ as a function on $V\subset k^\cI$, so that $\sigma^*u_j$ is a function
on $V_F$, and therefore on $U_{\cA_F}\subset V_F$.  Then $\iota^*\sigma^* u_j$ is a function on $X_{\cA,F}$,
and $b_j$ is its pullback to $W_{\!\cA,F}$.
By construction of $b_j$,
we have 
\begin{equation}\label{bj}
\sum_{i\in F}c_i u_i^{-1} + \sum_{j\in\IF}c_j b_j = 0 \in k[W_{\!\cA,F}]
\end{equation}
for any dependency $c$ of $\cA$.

Let $\becircled W_{\!\cA,F}$ be the open subscheme of $W_{\!\cA,F}$ defined
by the nonvanishing of $1-b_ju_j$ for all $j\in\IF$. 
Since $u_j$ vanishes at $x$ for all $j\in\IF$, we have
$x\in \becircled W_{\!\cA,F}$.
Recall that $$k[X_\cA] \cong k[u]_\cI\Big{/}\left\langle f_c(u) \mid \text{$c\in k^\cI$ a dependency}\right\rangle.$$
For any dependency $c$, let 
$\bc$ be the projection of $c$ onto $k^\IF$.
Then $\bc$ is a dependency for $\cA^F$, and all dependencies for $\cA^F$ arise in this way, thus
\begin{equation*}\label{ringR}
k[X_{\!\cA^F}] \cong k[u]_{\IF}\Big{/}\left\langle f_{\bc}(u) \mid \text{$c\in k^\cI$ a dependency}\right\rangle.
\end{equation*}
We define the map $$\varphi:k[X_{\!\cA^F}]\to k[\becircled W_{\!\cA,F}]$$
by putting $$\varphi(u_j) = \frac{u_j}{1-b_ju_j}$$
for all $j\in \IF$.  To show that this is well-defined, we must show that $f_{\bc}(u)$ maps to zero.
Indeed, we have
\begin{eqnarray*}
\varphi(f_{\bc}(u)) &=& \sum_{j\in S_{\bc}} c_j \prod_{k\in S_{\bc}^j} \frac{u_k}{1 - b_ku_k}\\\\
&=& \frac{\sum_{j\in S_{\bc}} c_j u_{S_{\bc}^j} (1-b_ju_j)}{\prod_{k\in S_{\bc}} (1- b_ku_k)}\\\\
&=& \frac{f_{\bc}(u) - \sum_{j\in\IF} c_jb_j\, u_{S_{\bc}}}{\prod_{k\in S_{\bc}} (1- b_ku_k)}\\\\
&=& \frac{f_{\bc}(u) + \sum_{i\in F} c_iu_i^{-1}\, u_{S_{\bc}}}{\prod_{k\in S_{\bc}} (1- b_ku_k)}\\\\
&=& \frac{f_c(u)}{u_{S_c\cap F}\prod_{k\in S_{\bc}} (1- b_ku_k)}.
\end{eqnarray*}
Since $f_c(u)$ vanishes on $X_\cA$, it vanishes on $\becircled W_{\!\cA,F}\subset X_\cA$, as well.

Now consider the map $\Phi:\becircled W_{\!\cA,F}\to X_{\!\cA^F}\times X_{\cA,F}$ induced by $\varphi$ on the first factor
and given by $\pi$ on the second factor.  Since $\pi(x) = x$ and $u_j$ vanishes on $x$ for all $j\in\IF$, we have
$\Phi(x) = (0,x)$.  The statement that $\Phi$ is \'etale at $x$ is equivalent to the statement
that $\Phi$ induces an isomorphism on tangent cones.  Indeed, the tangent cone of $X_{\!\cA^F}\times X_{\cA,F}$
at $(0,x)$ is isomorphic to $X_{\!\cA^F}\times V_F$, and the same is true of the tangent cone
of $X_\cA$ at $x$ \cite[Theorem 24]{SSV}.  The fact that $\Phi$ induces an isomorphism
follows from the fact that, for all $i\in F$, $\pi^*(u_i) = u_i$, and for all $j\in \IF$, $\varphi(u_j) = u_j + O(u_j^2)$.
\end{proof}

\excise{
\begin{theorem}\label{formal neighborhood}
Let $F$ be a flat of $\cA$ and let $x\in X_{\cA,F}$.  Then the formal neighborhood of $x$ in $X_\cA$
is isomorphic to the formal neighborhood of the origin in $V_F \times X_{\!\cA^F}$.
\end{theorem}

\begin{proof}
Consider the projection $\pi: X_\cA\to X_{\cA_F}$ induced by the coordinate projection of $k^\cI$ onto $k^F$,
and let $Y = \pi^{-1}(x)$.
The restriction of $\pi$ to the preimage of $X_{\cA,F}$ is {\bf [pretty nice?]},
hence we have an isomorphism of formal schemes
\nicktodo{We need to find the right condition and a reference.}
$$\widehat X_\cA \cong \widehat X_{\cA,F} \times \widehat Y,$$
where $\hat{Z}$ denotes the formal neighborhood of $x$ in $Z$.
Since $X_{\cA,F}\cong U_{\cA_F}$ and $U_{\cA_F}$ 
is an open subscheme of the vector space $V_F$, $\widehat X_{\cA,F}$ is isomorphic to 
the formal neighborhood of the origin in $V_F$.  Hence it remains only to show that $\widehat Y$ is isomorphic to the formal
neighborhood of the origin in $X_{\!\cA^F}$.

We next write down explicit presentations for both $k[X_{\!\cA^F}]$ and $k[Y]$.
Recall that $$k[X_\cA] \cong k[u]_\cI\Big{/}\left\langle f_c(u) \mid \text{$c\in k^\cI$ a dependency}\right\rangle.$$
For any dependency $c$, let 
$\bc$ be the projection of $c$ onto $k^\IF$.
Then $\bc$ is a dependency of $k^\IF$, and all dependencies of $k^\IF$ arise in this way, thus
\begin{equation}\label{ringR}
k[X_{\!\cA^F}] \cong k[u]_{\IF}\Big{/}\left\langle f_{\bc}(u) \mid \text{$c\in k^\cI$ a dependency}\right\rangle.
\end{equation}
For all $i\in F$, let $x_i\neq 0$ be the $i^\text{th}$ coordinate of 
$$x\in X_{\cA,F} \subset (k^\times)^F\times\{0\}\subset k^\cI.$$
Then $$k[Y] \cong k[u]_{\IF}\Big{/}\left\langle f_c(x,u) \mid \text{$c\in k^\cI$ a dependency}\right\rangle,$$
where $f_c(u,x)\in k[u]_{\IF}$ is the polynomial obtained from $f_c(u)$ by setting $u_i$ equal to $x_i$ for all $i\in F$.
Let $$\la_{x,c} := \sum_{i\in F} c_ix_i^{-1} \and \mu_{x,c} := \prod_{i\in S_c\cap F}x_i.$$
Then we have
\begin{equation*} f_c(x,u) = \sum_{i\in F} c_ix_i^{-1}\mu_{x,c} u_{S_{\bc}} 
+ \sum_{j\in S_{\bc}} c_j\mu_{x,c} u_{S_{\bc}^j}
=\, \mu_{x,c} \left(\,\la_{x,c}\, u_{S_{\bc}} + f_{\bc}(u)\,\right).
\end{equation*}
Since $x_i\neq 0$ for all $i\in F$, we have $\mu_{x,c}\neq 0$, and therefore
\begin{equation}\label{ringS}
k[Y] \cong k[u]_{\IF}\Big{/}\left\langle \la_{x,c}\, u_{S_{\bc}} + f_{\bc}(u) \mid \text{$c\in k^\cI$ a dependency}\right\rangle.
\end{equation}
To obtain presentations of $k[\widehat X_{\!\cA^F}]$ and $k[\widehat Y]$ from Equations \eqref{ringR} and \eqref{ringS},
we simply pass from polynomials to power series.

The statement that $x\in X_{\cA_F}\subset k^F$ is equivalent to the statement that $x^{-1}\in V_F\subset k^F$.
This implies by a standard diagram chase that there exists an element 
$b\in k^{\IF}$ such that $$(x^{-1},b)\in V\subset k^\cI\cong k^F\times k^{\IF}.$$  This in turn is equivalent to the statement
that $$\la_{x,c} + \sum_{j\in \IF} c_jb_j = 0$$ for every dependency $c\in k^\cI$.
Fix such a $b$, and consider mutually inverse automorphisms $\varphi$ and $\psi$ of $k[[u]]_{\IF}$
given by $$\varphi(u_j) = \frac{u_j}{1 - b_ju_j} \and \psi(u_j) = \frac{u_j}{1 + b_ju_j}.$$
If we can show that $\varphi$ descends to a map from $k[\widehat X_{\!\cA^F}]$ to $k[\widehat Y]$
and that $\psi$ descends to a map from $k[\widehat Y]$ to $k[\widehat X_{\!\cA^F}]$,
we will be done.

To show that $\varphi$ descends, we need to show that it takes relations
in $k[\widehat X_{\!\cA^F}]$ to relations in $k[\widehat Y]$.  Indeed, we have
\begin{eqnarray*}
\varphi(f_{\bc}(u)) &=& \sum_{j\in S_{\bc}} c_j \prod_{k\in S_{\bc}^j} \frac{u_k}{1 - b_ku_k}\\
&=& \frac{\sum_{j\in S_{\bc}} c_j u_{S_{\bc}^j} (1-b_ju_j)}{\prod_{k\in S_{\bc}} (1- b_ku_k)}\\
&=& \frac{f_{\bc}(u) - \sum_{j\in\IF} c_jb_j\, u_{S_{\bc}}}{\prod_{k\in S_{\bc}} (1- b_ku_k)}\\
&=& \frac{f_{\bc}(u) + \la_{x,c}\, u_{S_{\bc}}}{\prod_{k\in S_{\bc}} (1- b_ku_k)}.
\end{eqnarray*}
Since $f_{\bc}(u) + \la_{x,c}\, u_{S_{\bc}}$ is a relation in $k[\widehat Y]$, this means that $\varphi$
descends.  The proof that $\psi$ descends is identical.
\end{proof}

\begin{remark}
If we pass to the associated graded of the filtration of the local ring by powers of the maximal
ideal, Theorem \ref{formal neighborhood} says exactly that the tangent cone to $X_\cA$ at $x$
is isomorphic to $V_F \times X_{\!\cA^F}$.  This was already proven in \cite[Theorem 24]{SSV};
Theorem \ref{formal neighborhood} should be regarded as a strengthening of this result.
\end{remark}
}

\subsection{Intersection cohomology}\label{sec:ih}
The purpose of this subsection is to introduce and prove Theorem \ref{chastity}.  This is a slight reformulation
of \cite[4.1]{PW07}, which was in turn based on the work in \cite[\S 4]{KL80}.  See also \cite[3.3.3]{Let13}
for a similar result, formulated in Hodge theoretic terms, with a slightly different set of hypotheses.

Let $X$ be a variety over a finite field $\Fq$.
Fix a prime number $\ell$ not dividing $q$, and consider the $\ell$-adic
\'etale intersection cohomology group $\IHX := \H^{*-\dim X}(X; \IC_X)$.  Let $\Fr$ be the Frobenius automorphism of $X$,
and let $\Fr^i$ be the induced automorphism of $\IHiX$.
We say that $X$ is {\bf pure} if the eigenvalues of $\Fr^i$ all have absolute value equal to $q^{i/2}$.
We say that $X$ is {\bf chaste} if $\IHiX = 0$ for all odd $i$ and $\Fr^{2i}$ acts 
by multiplication by $q^i \in \Z \subset \Ql$ on $\IH^{2i}(X; \Ql)$.
If $X$ is chaste, then we define
$$P_X(t) := \sum_{i\geq 0} \dim \IH^{2i}(X; \Ql)\; t^i,$$
so that $P_X(q^s) = \operatorname{tr}\!\big((\Fr^*)^s\big)$.

Given a point $x\in X$, we will also be interested in the local intersection cohomology groups
$\IHxX := \H^{*-\dim X}(\IC_{X,x})$.  We say that $X$ is {\bf pointwise pure}
or {\bf pointwise chaste} at $x$ if the analogous properties hold for the local intersection cohomology
groups at $x$.  If $X$ is pointwise chaste at $x$, we define 
$$P_{X,x}(t) := \sum_{i\geq 0} \dim \IH^{2i}_x(X; \Ql)\; t^i.$$

We say that $X$ is an {\bf affine cone} if it is affine and its coordinate ring $\Fq[X]$ admits
a non-negative grading with only scalars in degree zero.  The {\bf cone point} of $X$
is the closed point defined by the vanishing of all functions of positive degree.  If $X$
is an affine cone with cone point $x$, then $\IHX$ is canonically isomorphic to $\IH^*_x(X; \Ql)$ \cite[Corollary 1]{Springer-purity}.

\begin{proposition}\label{purity}
If $X$ is an affine cone of positive dimension, then $X$ is pure and 
$\IHiX = 0$ for all $i\geq \dim X$.
\end{proposition}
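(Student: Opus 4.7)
The proof splits naturally into two pieces that share one common input. In both I would exploit the identification $\IH^*(X;\Ql) \cong \IH^*_x(X;\Ql)$ already noted for affine cones. Set $d = \dim X \geq 1$ throughout.

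For the vanishing claim, the key input is the middle-perversity support condition on $\IC_X$: one has $\dim \supp \mathcal{H}^j(\IC_X) < -j$ for $j > -d$, which in particular forces $\mathcal{H}^j(\IC_X) = 0$ globally for all $j \geq 0$ (the would-be support would have negative dimension). In the paper's normalization this says $\IH^i_x(X;\Ql) = \mathcal{H}^{i-d}(\IC_X)_x = 0$ for $i \geq d$ at \emph{any} closed point $x$, and the identification above then transfers the vanishing to the global IH.

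For purity, let $Y = \Proj \Fq[X]$, a projective variety of dimension $d-1$, and let $\pi\colon \tilde X \to X$ be the blow-up at $x$. Then $\pi$ is proper, restricts to an isomorphism over $X \smallsetminus \{x\}$, and $\tilde X$ is the total space of the tautological line bundle $p\colon \tilde X \to Y$. Since $\IC_{\tilde X}$ is pure and $\pi$ is proper, $R\pi_* \IC_{\tilde X}$ is pure, and the decomposition theorem writes it as a direct sum of shifted simple perverse sheaves. Restricting to the open dense set $X \smallsetminus \{x\}$, where $\pi$ is an isomorphism, identifies the restriction of $R\pi_* \IC_{\tilde X}$ with $\IC_X$; consequently $\IC_X$ must appear in the decomposition with multiplicity one, and all other summands are supported at $\{x\}$. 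Taking global sections exhibits $\IH^*(X;\Ql)$ as a direct summand of $\IH^*(\tilde X;\Ql)$. Because $p\colon \tilde X \to Y$ is a vector bundle, one has $\IC_{\tilde X} \cong p^*\IC_Y[1]$, and $\mathbb{A}^1$-homotopy invariance for $\ell$-adic cohomology yields $\IH^*(\tilde X;\Ql) \cong \IH^*(Y;\Ql)$. Since $Y$ is projective, Deligne's purity theorem makes $\IH^*(Y;\Ql)$ pure, and a direct summand of a pure graded vector space is pure, finishing the purity claim.

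The main obstacle, I expect, is the careful application of the decomposition theorem for $\pi$---specifically, tracking the restriction of the perverse summands of $R\pi_* \IC_{\tilde X}$ to the isomorphism locus $X \smallsetminus \{x\}$ in order to isolate $\IC_X$ with multiplicity one. Everything else is standard machinery (Gabber's purity, the decomposition theorem, $\mathbb{A}^1$-homotopy invariance, and Deligne's purity for projective varieties).
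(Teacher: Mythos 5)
Your argument for the vanishing claim is essentially equivalent to the paper's: both hinge on the contraction lemma identifying $\IH^*(X;\Ql)$ with $\IH^*_x(X;\Ql)$ at the cone point, and both then kill the stalk in degrees $\geq \dim X$ by the defining properties of $\IC_X$ — the paper via the Deligne formula $\IC_X = \tau^{<0}Rj_*\IC_U$ for extending over a single point, you via the stalk support inequalities. These are two ways of saying the same thing.

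Your purity argument, however, takes a genuinely different route. The paper deletes the cone point and uses the identity $\IH^i(X;\Ql) \cong \IH^i(U;\Ql)$ for $i < \dim X$ (again from the truncation formula plus contraction), then runs the Leray--Serre spectral sequence for the $\mathbb{G}_m$-bundle $U \to Z = \Proj \Fq[X]$; hard Lefschetz for $\IH^*(Z;\Ql)$ shows the relevant $d_2$ differentials are injective in low degrees, identifying $\IH^i(U;\Ql)$ with the primitive part of $\IH^i(Z;\Ql)$, hence pure. You instead blow up the cone point and apply the decomposition theorem to $\pi: \tilde X \to X$, splitting off $\IC_X$ as a summand of $R\pi_*\IC_{\tilde X}$ (with the complementary summands supported at the vertex), so that $\IH^*(X;\Ql)$ becomes a graded, Frobenius-equivariant direct summand of $\IH^*(\tilde X;\Ql) \cong \IH^*(Z;\Ql)$, the last isomorphism by $\mathbb{A}^1$-homotopy invariance for the line bundle $p: \tilde X \to Z$. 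Both arguments ultimately cite purity of IH for the projective variety $Z$; the paper's Gysin/hard-Lefschetz route is a bit more hands-on and gives an explicit description of $\IH^*(X;\Ql)$ as primitive classes, while yours is cleaner in structure but invokes the full decomposition theorem as a black box. One small caveat (which applies equally to the paper's $\mathbb{G}_m$-bundle claim): the blow-up/$\Proj$ picture as you describe it implicitly assumes $\Fq[X]$ is generated in degree one, which holds in the paper's application (the Orlik--Terao algebra) but is not built into the stated definition of affine cone.
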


\begin{proof}
Let $U\subset X$ be the complement of the cone point, and let $Z = U/\mathbb{G}_m = \Proj \Fq[X]$.
Let $j:U\to X$ be the inclusion; then $\IC_X = j_{!*}\IC_U = \tau^{<0}Rj_*\IC_U$,
so $$\IHiX = \H^{i-\dim X}(X; \IC_X) = \H^{i-\dim X}(X; \tau^{<0}Rj_*\IC_U)$$ vanishes when $i\geq \dim X$,
and it is equal to $\IH^i(U; \Ql)$ when $i<\dim X$.

By the Leray-Serre spectral sequence applied to the $\mathbb{G}_m$-bundle $U\to Z$, 
combined with the hard Lefschetz theorem for $\IH^*(Z; \Ql)$, $\IH^i(U; \Ql)$ is isomorphic
to the space of primitive vectors in $\IH^*(Z; \Ql)$ for all $i<\dim X$.  Thus purity of $X$ follows from
purity of the projective variety $Z$.
\end{proof}

\begin{remark}
Proposition \ref{purity} is well-known to experts; in particular,
a version of the argument above can also be found in
\cite[4.2]{Brion-Joshua} and \cite[3.1]{dCM}.
\end{remark}

\excise{
Let $X = \sqcup X_\b$ be a stratification of $X$, and let $x\in X_\b$.  
We say that an affine cone $S$ with cone point $s$ is a {\bf normal slice} to $X_\b$ at $x$ if there exists
a vector space $V$, an \'etale neighborhood of $(0,s)\in V\tilde S$, and an isomorphism to an \'etale
neighborhood of $x\in X$ such that the induced map on tangent cones
$$V\times S \;\cong\; TC_{(0,s)} (V \times S)\to TC_x X$$
is an isomorphism that takes $V$ to $T_x X_\b$.

\begin{lemma}\label{slice}
If $S$ is a normal slice to $X_\b$ at $x$, then $\IH^*(S; \Ql) \cong \H^{*-\dim X}(\IC_{X,x})$.
\end{lemma}

\begin{proof}
A local \'etale isomorphism induces an isomorphism of \'etale cohomology groups of stalks
of intersection cohomology sheaves.  Since $S$ is a cone,
its local intersection cohomology at the cone point coincides with its global
intersection cohomology.
\end{proof}
}

The following combinatorial lemma will be needed in the proof of Theorem \ref{chastity};
the statement and proof of this lemma were communicated to us by Ben Webster.
Let $k$ be a field of characteristic zero.
For all positive integers $m,n,s$, consider the super power sum polynomial
$$p_{m,n,s}(x,y) := x_1^s + \cdots + x_m^s - y_1^s -\cdots - y_n^s,$$
where $x = (x_1,\ldots,x_m)\in k^m$ and 
$y = (y_1,\ldots,y_n)\in k^n$.

\begin{lemma}\label{super power sums}
Suppose that 
$p_{m,n,s}(x,y) = p_{m',n',s}(x',y')$ for all $s\geq 0$,
that $x_i\neq 0\neq y_i$ for all $i$,
and that $x_i\neq y_j$ and $x_i' \neq y_i'$ for all $i,j$.
Then $m=m'$, $n=n'$, and $(x,y)$ may be taken to $(x',y')$ by an element of $S_m\times S_n$.
\end{lemma}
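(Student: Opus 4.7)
The plan is to extract from the power sum equalities a stronger algebraic identity between rational generating functions, and then pass from this identity to an equality of multisets using the disjointness hypotheses.

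First I would form the rational function
\[
f(t) \;:=\; \frac{\prod_{i=1}^m (1-x_i t)}{\prod_{j=1}^n (1-y_j t)},
\]
and observe that, formally,
\[
-\log f(t) \;=\; \sum_{s\ge 1} \frac{x_1^s+\cdots+x_m^s - y_1^s-\cdots - y_n^s}{s}\, t^s \;=\; \sum_{s\ge 1} \frac{p_{m,n,s}(x,y)}{s}\, t^s.
\]
Since $k$ has characteristic zero, the formal series $-\log f(t) \in k[[t]]$ determines $f(t)$ (normalised so that $f(0)=1$) uniquely; hence the hypothesis $p_{m,n,s}(x,y)=p_{m',n',s}(x',y')$ for all $s\ge 0$ yields the identity of rational functions
\[
\frac{\prod_i (1-x_i t)}{\prod_j (1-y_j t)} \;=\; \frac{\prod_i (1-x_i' t)}{\prod_j (1-y_j' t)}.
\]
Cross-multiplying produces the polynomial identity
\[
\prod_i (1-x_i t)\cdot \prod_j (1-y_j' t) \;=\; \prod_i (1-x_i' t)\cdot\prod_j (1-y_j t).
\]

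Comparing degrees in $t$ gives $m+n' = m'+n$. The remaining task is to extract $m=m'$ and $n=n'$ and the asserted $S_m\times S_n$-equivalence. Since $x_i\neq 0$ and $y_j\neq 0$, the roots (in $1/t$) of the two sides give equal multisets
\[
X\sqcup Y' \;=\; X'\sqcup Y,
\]
where I write $X=\{x_1,\dots,x_m\}$, $Y=\{y_1,\dots,y_n\}$, $X'=\{x_i'\}$, $Y'=\{y_j'\}$ as multisets in $k^\times$. The hypotheses $x_i\ne y_j$ and $x_i'\ne y_j'$ say that $X\cap Y = \emptyset$ and $X'\cap Y'=\emptyset$ as multisets. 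I would then do a short multiplicity chase: for any $a\in k^\times$, the equation $\mu_X(a)+\mu_{Y'}(a)=\mu_{X'}(a)+\mu_Y(a)$ combined with $\mu_X(a)\mu_Y(a)=\mu_{X'}(a)\mu_{Y'}(a)=0$ forces $\mu_X(a)=\mu_{X'}(a)$ and $\mu_Y(a)=\mu_{Y'}(a)$. This proves $X=X'$ and $Y=Y'$ as multisets, which is exactly the desired conclusion.

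The main conceptual step is the first, identifying the power sums with the logarithmic expansion of $f(t)$; once that is in place the rest is forced. The potential pitfall is the last step, where one has to use the disjointness hypotheses carefully, since without them one could produce spurious common factors of $(1-at)$ on opposite sides of the equation; the multiplicity chase above handles this cleanly.
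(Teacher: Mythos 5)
Your proof is correct and follows essentially the same route as the paper's: form the rational function $\prod(1-x_it)/\prod(1-y_jt)$, recover it from its logarithmic Taylor series (valid in characteristic zero), and then read off the multisets of roots and poles using the disjointness hypotheses to rule out cancellation. The only difference is that you spell out the final zeros-and-poles multiplicity chase in detail, which the paper compresses into a single sentence.
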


\begin{proof}
Consider the rational function
$$f(z) := \frac{(1-x_1z)\cdots (1-x_mz)}{(1-y_1z)\cdots (1-y_nz)} \in k(z).$$
We have
\begin{eqnarray*}
f(z) &=& \exp\log f(z)\\\\
&=& \exp\Big(\log(1-x_1z)+\cdots+\log(1-x_mz)-\log(1-y_1z)-\cdots-\log(1-y_nz)\Big)\\\\
&=& \exp\sum_{s=1}^\infty\left(\frac{(x_1z)^s}{s} +\cdots +\frac{(x_mz)^s}{s}
- \frac{(y_1z)^s}{s}-\cdots-\frac{(y_nz)^s}{s}\right)\\\\
&=& \exp\sum_{s=1}^\infty \frac{p_{m,n,s}(x,y)}{s} z^{s}.
\end{eqnarray*}
A rational function over a field of characteristic zero is determined by its Taylor expansion
at zero, thus the values of the super power sums determine the rational function $f(z)$.
By looking at zeros and poles of $f(z)$ with multiplicity, they determine $m$, $n$, $x$ (up to permutation), 
and $y$ (up to permutation).
\end{proof}

We say that a variety $Y$ over $\Fq$ has {\bf polynomial count} if there exists a polynomial $\nu_Y(t)
\in\mathbb{Z}[t]$ such that,
for all $s\geq 1$, $|Y(\Fqs)| = \nu(q^s)$.  Let $X$ be an affine cone, and let $X = \bigsqcup X_\b$ be a stratification such that $X_0$ is the only zero-dimensional stratum, consisting only of the cone point.

\begin{theorem}\label{chastity}
Suppose that $X_\b$ has polynomial count for all $\b$, and that
$X\smallsetminus X_0$ is everywhere pointwise chaste
with local intersection cohomology Poincar\'e polynomial
$P_{X,x}(t) = P_\b(t)$ for all $x\in X_\b$.
Then $X$ is chaste (and therefore also pointwise chaste at the cone point), 
and $$t^{\dim X}P_X(t^{-1}) = \sum_\b \nu_{X_\b}(t) P_{\b}(t).$$
\end{theorem}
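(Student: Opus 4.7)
The plan is to apply the Grothendieck--Lefschetz trace formula to the intersection complex $\IC_X$, decompose the stalk-side sum by strata, and convert the resulting identity of Frobenius traces into chastity using Proposition \ref{purity} and Lemma \ref{super power sums}. On the stalk side, each $x\in X_\b(\Fqs)$ with $\b\ne 0$ contributes $(-1)^{\dim X}P_\b(q^s)$ by the pointwise chastity hypothesis (after accounting for the shift in $\IHxX=\H^{*-\dim X}(\IC_{X,x})$), so the stratum $X_\b$ contributes $\nu_{X_\b}(q^s)\,(-1)^{\dim X}P_\b(q^s)$; the lone cone point $x_0$ contributes the analogous expression in the still-unknown local Poincar\'e polynomial $P_{X,x_0}(t)$, which by the cone property coincides with $P_X(t)$. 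On the global side, Verdier self-duality of $\IC_X$ together with the cone identity $\IH^*(X;\Ql)=\IH^*_{x_0}(X;\Ql)$ rewrites $\H^*_c(X;\IC_X)$ in terms of $\IH^*(X;\Ql)$ up to a Tate twist. Assembling these, I obtain, for every $s\ge 1$, a single trace identity relating an alternating Frobenius trace on $\IH^*(X;\Ql)$ to $P_X(q^s)+\sum_{\b\ne 0}\nu_{X_\b}(q^s)P_\b(q^s)$.

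Next I would invoke Proposition \ref{purity}: $X$ is pure and $\IH^i(X;\Ql)=0$ for $i\ge \dim X$, so every Frobenius eigenvalue on $\IH^i(X;\Ql)$ has absolute value $q^{i/2}$, and eigenvalues from different cohomological degrees have different absolute values. Writing the trace side as a super power sum $\sum_{i,k}(-1)^i\a_{i,k}^s$ in the Frobenius eigenvalues $\a_{i,k}$, and the right-hand side as a polynomial $\sum_j r_j\,(q^j)^s$ in $q^s$ with integer coefficients, Lemma \ref{super power sums} applies with the even-degree eigenvalues paired with the positive-$r_j$ bases $q^j$ on one side, and the odd-degree eigenvalues paired with the negative-$r_j$ bases $q^j$ on the other. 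Its disjointness hypothesis is met precisely because purity prevents any cross-matching between even and odd degrees (their absolute values differ by a factor of $q^{1/2}$). The conclusion forces every odd-degree $\IH^i(X;\Ql)$ to vanish and every Frobenius eigenvalue on $\IH^{2j}(X;\Ql)$ to equal $q^j$, which is exactly chastity.

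Once chastity is established, substituting $\operatorname{tr}\!\big((\Fr^*)^s,\IH^{2j}(X;\Ql)\big)=q^{js}\dim\IH^{2j}(X;\Ql)$ into the trace identity yields $t^{\dim X}P_X(t^{-1})=\sum_\b\nu_{X_\b}(t)P_\b(t)$, with the $\b=0$ summand equal to $\nu_{X_0}(t)P_0(t)=1\cdot P_X(t)$. The main obstacle is the super power sum step: one must verify the hypotheses of Lemma \ref{super power sums}, in particular disjointness of the positive and negative bases, and this is exactly what the weight bounds from Proposition \ref{purity} guarantee. A secondary bookkeeping point is the appearance of $t^{\dim X}P_X(t^{-1})$ (rather than $P_X(t)$) on the left: the reciprocal and the factor $t^{\dim X}$ arise from Poincar\'e--Verdier duality combined with the shift convention $\IHX=\H^{*-\dim X}(X;\IC_X)$, reflecting that $X$ is an affine cone rather than a proper variety.
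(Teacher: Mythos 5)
Your overall route is the paper's: Grothendieck--Lefschetz applied to $\IC_X$, decomposition of the stalk side by strata, Poincar\'e duality on the global side, the contraction lemma at the cone point, and finally Lemma~\ref{super power sums} together with Proposition~\ref{purity}. The problem is in how you set up the super power sum. After Poincar\'e duality and the trace formula, the expression to which Lemma~\ref{super power sums} is applied is
$$\sum_{i<\dim X}(-1)^i\Bigl(\,\sum_j\bigl(q^{\dim X}/\alpha_{i,j}\bigr)^{s}\;-\;\sum_j\alpha_{i,j}^{\,s}\Bigr)\;=\;\sum_{\b\neq 0}\nu_{X_\b}(q^s)\,P_\b(q^s),$$
where $\alpha_{i,j}$ are the Frobenius eigenvalues on $\IH^i(X;\Ql)$. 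This is not $\sum_{i,k}(-1)^i\alpha_{i,k}^s$: the duality-twisted quantities $q^{\dim X}/\alpha_{i,j}$ are not eigenvalues on $\IH^*(X;\Ql)$, and their presence changes the partition of the variables into the $x$- and $y$-slots of $p_{m,n,s}$. The positive slot is $\{q^{\dim X}/\alpha_{i,j}: i\ \text{even}\}\cup\{\alpha_{i,j}: i\ \text{odd}\}$ and the negative slot is the complementary set, so the pairing is not simply ``even-degree eigenvalues with positive-$r_j$ bases, odd-degree eigenvalues with negative-$r_j$ bases.'' Correspondingly, the crucial disjointness to check is not even-vs.-odd (that holds for any pure variety and doesn't use the structure of the statement); it is that no $\alpha_{i,j}$ can coincide with any $q^{\dim X}/\alpha_{i',j'}$, and this is where the vanishing $\IH^i(X;\Ql)=0$ for $i\geq\dim X$ from Proposition~\ref{purity} enters, forcing $|\alpha_{i,j}|<q^{\dim X/2}<|q^{\dim X}/\alpha_{i',j'}|$. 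A smaller bookkeeping point: before chastity is established, the cone-point contribution to the stalk side is the alternating trace $\sum_i(-1)^i\operatorname{tr}\!\big((\Fr^*)^s\curvearrowright\IH^i(X;\Ql)\big)$, not a Poincar\'e polynomial $P_X(q^s)$; writing it as $P_X(q^s)$ assumes what you are proving.
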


\begin{proof}
Consider the Frobenius automorphism $\Fr^*_{c}$ of $\IH^*_c(X; \Ql)$, the compactly supported
intersection cohomology group.
By Poincar\'e duality \cite[II.7.3]{KW}, we have
\begin{equation*}\label{pd}
q^{s\dim X}\operatorname{tr}\!\Big((\Fr^*)^{-s}\curvearrowright \IH^{2\dim X - i}(X; \Ql)\Big) = \operatorname{tr}\!\Big((\Fr_c^*)^s\curvearrowright \IH_c^i(X; \Ql)\Big).
\end{equation*}
By the Lefschetz formula \cite[III.12.1(4)]{KW}, we have
$$\sum_{i\geq 0}(-1)^i \operatorname{tr}\!\Big((\Fr_c^*)^s\curvearrowright \IH_c^i(X; \Ql)\Big)
= \sum_{x\in X(\Fqs)}\sum_{i\geq 0}(-1)^i \operatorname{tr}\!\Big((\Fr_x^*)^s\curvearrowright\IHxX\Big).$$
If $x\in X_\b(\Fqs)$ for some $\b\neq 0$, then $x$ contributes $P_{\b}(q^s)$ to this sum.  If $x$ is the cone point, then 
$\IHxX \cong \IHX$, so $x$ contributes $\sum(-1)^i\operatorname{tr}\!\big((\Fr^*)^s\curvearrowright \IH^i(X; \Ql)\big)$.
Thus we have
\begin{equation}\label{lef}
\sum_{i\geq 0}(-1)^i \left(q^{s\dim X} \operatorname{tr}\!\Big((\Fr^*)^{-s}\curvearrowright \IH^{2\dim X-i}(X; \Ql)\Big) - \operatorname{tr}\!\Big((\Fr^*)^{s}\curvearrowright \IH^i(X; \Ql)\Big)\right)
= \sum_{\b\neq 0}\nu_{X_\b}(q^s)P_{\b}(q^s).
\end{equation}

Let $r_i = \dim \IH^i(X; \Ql)$, and let $\a_{1,i},\cdots,\a_{r_i,i}$ be the eigenvalues of $\Fr^*\curvearrowright\IH^i(X; \Ql)$, counted with multiplicity.
Write $$\a_i := (\a_{i,1},\cdots,\a_{i,r_i}) \and q^{\dim X}\!/\a_i := (q^{\dim X}\!/\a_{i,1},\cdots,q^{\dim X}\!/\a_{i,r_i}),$$
so that the left-hand side
of Equation \eqref{lef} is equal to 
$$\sum_{i\geq 0}(-1)^i p_{r_i,r_i,s}(q^{\dim X}\!/\a,\a).$$
The right-hand side is a polynomial in $q^s$ with integer coefficients,
and therefore can be written in the form $p_{m,n,s}(x,y)$, where the entries of $x$ and $y$ 
are all non-negative powers of $q$ and the powers that appear in $x$ 
are distinct from the powers that appear in $y$.  
Assuming that $\dim X>0$, Proposition \ref{purity} tells us that the entries of $\a_i$ are disjoint from the entries of $q^{\dim X}\!/\a_i$,
thus the hypotheses of Lemma \ref{super power sums}
are satisfied and each $\a_{i,j}$ is equal to a power of $q$.  
Since we already know that $X$ is pure, this 
implies that $X$ is chaste, and Equation \eqref{lef} becomes 
$$q^{s\dim X}P_X(q^{-s}) - P_X(q^s) = \sum_{\b\neq 0}\nu_{X_\b}(q^s)P_{\b}(q^s).$$
Since this holds for all $s$, we may replace $q^s$ with the variable $t$.
Moving $P_X(t) = P_{0}(t)$ to the right-hand side, and noting that $\nu_{X_0}(t) = 1$,
we obtain the desired equality.
\end{proof}

\subsection{Cohomological interpretation of Kazhdan-Lusztig polynomials}
We now combine the results of Subsections \ref{sec:xa}, \ref{sec:local}, 
and \ref{sec:ih} to give a cohomological interpretation
of the polynomial $P_{M_\cA}(t)$.  Let $\cA$ be an arrangement over a finite field $\Fq$, and let $\ell$ be a prime
that does not divide $q$.

\begin{lemma}\label{slice-OT}
For every flat $F$ and every element $x\in X_{\cA,F}$, 
$\IH^*_x(X_\cA; \Ql) \cong \IH^*(X_{\!\cA^F}; \Ql)$.
\end{lemma}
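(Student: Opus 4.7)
The plan is to combine Theorem \ref{etale} with two standard facts about intersection cohomology: étale invariance of IC stalks, and the Kunneth-type decomposition of IC for a product with a smooth factor, together with the fact that local IH at the cone point of an affine cone equals global IH (as noted in the paragraph before Proposition \ref{purity}).

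First I would invoke Theorem \ref{etale} to produce an open neighborhood $\becircled W_{\!\cA,F} \subset X_\cA$ containing $x$, along with a map $\Phi:\becircled W_{\!\cA,F}\to X_{\!\cA^F}\times X_{\cA,F}$ that is étale at $x$ and sends $x$ to $(0,x)$. Since $\IC$ sheaves pull back (up to a shift by the relative dimension, which is zero here) under étale morphisms, this yields
\[
\IH^*_x(X_\cA;\Ql) \;\cong\; \IH^*_{(0,x)}\!\left(X_{\!\cA^F}\times X_{\cA,F};\,\Ql\right).
\]

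Next I would analyze the right-hand side. By Proposition \ref{strat}, $X_{\cA,F}\cong U_{\cA_F}$ is an open subscheme of the vector space $V_F$, hence smooth at $x$. Therefore $\IC_{X_{\cA,F}}$ is (a shift of) the constant sheaf near $x$, and the external product formula $\IC_{Y\times Z}\cong \IC_Y\boxtimes \IC_Z$ (valid when one factor is smooth) gives that the stalk at $(0,x)$ of $\IC_{X_{\!\cA^F}\times X_{\cA,F}}$ agrees with the stalk of $\IC_{X_{\!\cA^F}}$ at $0$. Thus
\[
\IH^*_{(0,x)}\!\left(X_{\!\cA^F}\times X_{\cA,F};\,\Ql\right)\;\cong\;\IH^*_0(X_{\!\cA^F};\Ql).
\]

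Finally, the reciprocal plane $X_{\!\cA^F}$ is an affine cone (its coordinate ring, the Orlik-Terao algebra of $\cA^F$, is graded with only scalars in degree zero), and $0$ is its cone point. As recalled immediately before Proposition \ref{purity}, for an affine cone the canonical map $\IH^*(X_{\!\cA^F};\Ql)\to \IH^*_0(X_{\!\cA^F};\Ql)$ is an isomorphism \cite[Corollary 1]{Springer-purity}. Combining the three isomorphisms yields the desired identification $\IH^*_x(X_\cA;\Ql)\cong \IH^*(X_{\!\cA^F};\Ql)$.

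The only step that requires real care is the first one: one must verify that étale-locally the IC sheaf of $X_\cA$ matches the exterior product of IC sheaves on $X_{\!\cA^F}$ and on the smooth stratum, which is where Theorem \ref{etale} is doing the heavy lifting. The remaining two steps are formal once the étale slice is in place.
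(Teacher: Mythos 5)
Your argument is essentially identical to the paper's: both invoke Theorem \ref{etale} for the étale slice, decompose the stalk of $\IC$ on the product using smoothness of $X_{\cA,F}$ (the paper phrases this as ``local IH of the product is the tensor product of local IHs,'' you phrase it via $\IC_{Y\times Z}\cong \IC_Y\boxtimes \IC_Z$), and then apply the contraction lemma from \cite[Corollary 1]{Springer-purity} to identify local IH at the cone point of $X_{\!\cA^F}$ with its global IH. Your writeup is slightly more explicit about the Künneth step, but there is no substantive difference in the route.
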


\begin{proof}
By Theorem \ref{etale}, we have an \'etale map from a
neighborhood of $x\in X_\cA$ to a neighborhood of $(0,x) \in X_{\!\cA^F}\times X_{\cA,F}$.
It follows that the local intersection cohomology of $X_\cA$
at $x$ is isomorphic to the local intersection cohomology of $X_{\!\cA^F}$ at the cone point
times the local intersection cohomology of $X_{\cA,F}$ at $x$.
By the contraction lemma \cite[Corollary 1]{Springer-purity}, 
the local intersection cohomology of $X_{\!\cA^F}$ at the cone point is isomorphic to the 
the global intersection cohomology of $X_{\!\cA^F}$.
Since $X_{\cA,F}$ is smooth, the local intersection
cohomology of $X_{\cA,F}$ is trivial.
\end{proof}

Let $\chi_\cA(t) = \chi_{M_\cA}(t)$ be the characteristic polynomial of $\cA$.
The variety $U_\cA$ is polynomial count with $\nu_{U_\cA}(t) = \chi_\cA(t)$ \cite[2.69]{OT}.
For any arrangement $\cA$ in $V$, let $$\rk\cA := \rk M_\cA = \dim V = \dim X_\cA.$$

\begin{proposition}\label{arithmetic recursion}
The reciprocal plane $X_\cA$ is chaste, and 
$$t^{\rk\cA}P_{X_\cA}(t^{-1}) = \sum_F\chi_{\cA_F}(t)P_{X_{\!\cA^F}}(t).$$
\end{proposition}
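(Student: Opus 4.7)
The strategy is to apply Theorem \ref{chastity} to $X_\cA$ with the stratification $X_\cA = \bigsqcup_F X_{\cA,F}$ from Proposition \ref{strat}, proceeding by induction on $\rk \cA$.

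The base case $\rk \cA = 0$ is immediate: $X_\cA$ is a single reduced point, the only flat is $\emptyset$, and both sides of the asserted identity equal $1$. For the inductive step I first verify the hypotheses of Theorem \ref{chastity}. The variety $X_\cA$ is an affine cone because the Orlik--Terao presentation exhibits $k[X_\cA]$ as $k[u]_\cI$ modulo a homogeneous ideal, giving it a non-negative grading whose degree-zero part consists only of scalars, with cone point equal to the origin of $k^\cI$. Since $X_{\cA,F} \cong U_{\cA_F}$ has dimension $\rk F$, the cone point is the unique zero-dimensional stratum, namely $X_{\cA,\emptyset}$. The polynomial count hypothesis $\nu_{X_{\cA,F}}(t) = \chi_{\cA_F}(t)$ follows from the classical count of $\Fqs$-points of a hyperplane arrangement complement \cite[2.69]{OT} applied to $U_{\cA_F}$.

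Next I would establish pointwise chasteness of $X_\cA$ away from the cone point. For a nonempty flat $F$ and a point $x \in X_{\cA,F}$, Lemma \ref{slice-OT} identifies $\IH^*_x(X_\cA; \Ql)$ with $\IH^*(X_{\!\cA^F}; \Ql)$; since $\rk \cA^F = \rk \cA - \rk F < \rk \cA$, the inductive hypothesis asserts that $X_{\!\cA^F}$ is chaste, hence the local intersection cohomology of $X_\cA$ at $x$ is chaste with Poincaré polynomial $P_{X_{\!\cA^F}}(t)$. Theorem \ref{chastity} then supplies both chasteness of $X_\cA$ itself and the identity
$$t^{\rk \cA}P_{X_\cA}(t^{-1}) = \sum_F \chi_{\cA_F}(t)\, P_{X_{\!\cA^F}}(t),$$
where the sum ranges over all flats and the $F = \emptyset$ summand contributes precisely $P_{X_\cA}(t)$, consistent with the normalization in the statement of Theorem \ref{chastity}.

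The one technical point worth attention is the Frobenius-equivariance of the isomorphism in Lemma \ref{slice-OT}: the étale map $\Phi$ of Theorem \ref{etale} was constructed from a choice of splitting $\sigma: V_F \to V$ and the auxiliary functions $b_j$, and I would want these to be defined over $\Fq$ (or over the residue field of $x$) so that the induced map on local intersection cohomology intertwines the Frobenius actions. This is harmless—$V$ and $V_F$ are $\Fq$-vector spaces so $\sigma$ may be chosen $\Fq$-linearly—but it is the only substantive verification needed beyond direct invocation of the preceding results, and it is where I would spend the most care in writing out a full proof.
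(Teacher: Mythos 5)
Your proof follows the same route as the paper: induction on $\rk\cA$, using Lemma \ref{slice-OT} to transfer chastity from the smaller reciprocal planes $X_{\!\cA^F}$ to the strata of $X_\cA$ away from the cone point, and then invoking Theorem \ref{chastity}. The paper's proof is terser and leaves the verification of the affine-cone, polynomial-count, and stratification hypotheses implicit, whereas you spell them out; you also flag the Frobenius-equivariance of the isomorphism in Lemma \ref{slice-OT}, which the paper elides but which, as you correctly observe, holds because the splitting $\sigma: V_F \to V$ and hence the \'etale map $\Phi$ of Theorem \ref{etale} can be chosen over $\Fq$ (they depend only on the flat $F$, not on the point $x$). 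The approaches are essentially identical; yours is simply more explicit about the hypotheses being checked.
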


\begin{proof}
We proceed by induction on the rank of $\cA$.  If $\rk\cA = 0$, the statement is trivial.
Now assume that the proposition holds for all arrangements of smaller rank.  In particular, this means that $X_{\!\cA^F}$ is chaste for all nonempty flats $F$.  By Lemma \ref{slice-OT}, this implies that $X_\cA$
is pointwise chaste away from the cone point, with $P_{X_\cA,x}(t) = P_{X_{\!\cA^F}}(t)$
for all $F$ nonempty and $x\in X_{\cA,F}$.  The statement then 
follows from Theorem \ref{chastity}.
\end{proof}

As a consequence, we find that the intersection cohomology Poincar\'e polynomial of a reciprocal plane over a finite field
coincides with the Kazhdan-Lusztig polynomial of the corresponding matroid.

\begin{theorem}\label{comb=geom}
If $\cA$ is an arrangement over a finite field, then $P_{X_\cA}(t) = P_{M_\cA}(t)$.
\end{theorem}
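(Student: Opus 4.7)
The plan is to show that $P_{X_\cA}(t)$ satisfies the three defining properties from Theorem \ref{KL-exists}, then invoke the uniqueness part of that theorem to conclude that $P_{X_\cA}(t)$ equals $P_{M_\cA}(t)$. So I would proceed by induction on $\rk\cA$, verifying each property in turn.

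First, for the base case and for condition (1), if $\rk\cA = 0$ then $V = 0$ and $X_\cA$ is a point, so $\IH^0(X_\cA;\Ql) = \Ql$ and all higher groups vanish, giving $P_{X_\cA}(t) = 1$ as required. Next, for condition (3), I would directly quote Proposition \ref{arithmetic recursion}, which establishes the recursion
\[
t^{\rk\cA}P_{X_\cA}(t^{-1}) = \sum_F \chi_{\cA_F}(t)\, P_{X_{\!\cA^F}}(t)
\]
and matches it term by term with the KL recursion using $\chi_{\cA_F}(t) = \chi_{(M_\cA)_F}(t)$ and $M_{\cA^F} = (M_\cA)^F$.

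The slightly subtler point is condition (2), the degree bound $\deg P_{X_\cA}(t) < \tfrac{1}{2}\rk\cA$. Here I would use that $X_\cA$ is a positive-dimensional affine cone (with cone point the origin of $k^\cI$) of dimension equal to $\rk\cA$, so Proposition \ref{purity} gives $\IH^i(X_\cA;\Ql) = 0$ for all $i \geq \rk\cA$. Combined with chastity (established in Proposition \ref{arithmetic recursion}), which forces all odd-degree intersection cohomology to vanish, only $\IH^{2i}(X_\cA;\Ql)$ with $2i < \rk\cA$, i.e.\ $i < \tfrac{1}{2}\rk\cA$, can contribute to $P_{X_\cA}(t)$. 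This gives precisely the degree bound.

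With these three conditions verified, the uniqueness statement of Theorem \ref{KL-exists} immediately forces $P_{X_\cA}(t) = P_{M_\cA}(t)$. I do not expect any real obstacle here: all the substantive work has been done in Subsections \ref{sec:xa}--\ref{sec:ih}, with the \'etale slice theorem (Theorem \ref{etale}) feeding into Lemma \ref{slice-OT} to identify local IH stalks, and chastity together with the Lefschetz trace formula (Theorem \ref{chastity}) producing the inductive recursion. The final theorem is thus essentially a bookkeeping step matching the geometric recursion to the combinatorial one.
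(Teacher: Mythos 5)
Your proof is correct and follows the same route as the paper: the paper's proof is exactly the one-line observation that the result follows from Proposition \ref{purity} (degree bound), Proposition \ref{arithmetic recursion} (chastity plus the geometric recursion), and the uniqueness clause of Theorem \ref{KL-exists}. You have simply unpacked that citation, checking each of the three defining conditions explicitly, which is a faithful elaboration rather than a different argument.
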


\begin{proof}
This follows from Proposition \ref{purity}, Theorem \ref{arithmetic recursion}, and the uniqueness 
of Theorem \ref{KL-exists}.
\end{proof}

\begin{corollary}\label{nonneg-cor}
If a matroid $M$ is representable, then $P_M(t)$ has non-negative coefficients.
\end{corollary}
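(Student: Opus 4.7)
The plan is essentially to combine Theorem \ref{comb=geom} with the standard fact that representability can be transferred to a finite field, and then use that intersection cohomology Poincaré polynomials automatically have non-negative coefficients.

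First I would reduce to the case of a finite field. Suppose $M$ is representable over some field $k$; let $\cA_0 = (\cI, V_0, a_0)$ be an arrangement over $k$ with $M_{\cA_0} = M$. Fix a basis of $V_0$ so that the vectors $a_0(i)$ are described by a matrix with entries in $k$. The matroid $M$ is determined by which minors of this matrix vanish and which do not; this is a finite list of polynomial conditions. Consequently, letting $R \subset k$ be the subring generated by all the relevant matrix entries together with the inverses of all non-vanishing maximal minors (of appropriate size), $R$ is a finitely generated $\Z$-algebra and we obtain an arrangement over $R$ whose matroid is $M$. Choosing any maximal ideal $\mathfrak{m} \subset R$, the residue field $R/\mathfrak{m}$ is a finite field $\Fq$ (by the Nullstellensatz, maximal ideals in finitely generated $\Z$-algebras have finite residue fields), and base changing gives an arrangement $\cA$ over $\Fq$ with $M_\cA = M$, since the same minors vanish (being zero in $R$) and the same minors are nonzero (being units in $R$).

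Next, I would invoke Theorem \ref{comb=geom} to obtain the identity
\[
P_M(t) = P_{M_\cA}(t) = P_{X_\cA}(t) = \sum_{i \geq 0} \dim_{\Ql} \IH^{2i}(X_\cA; \Ql)\, t^i.
\]
Since the coefficients on the right are dimensions of $\Ql$-vector spaces, they are non-negative integers, and the corollary follows immediately.

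The only non-routine step here is the descent to a finite field, but this is standard and causes no real difficulty. The deeper content of the corollary is already packaged into Theorem \ref{comb=geom} (and thus into Theorem \ref{etale} and Theorem \ref{chastity}), so this final step is essentially a two-line argument.
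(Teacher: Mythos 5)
Your proof is correct and follows the same route as the paper: descend from an arbitrary field to a finite field, then invoke Theorem \ref{comb=geom} and read off non-negativity from the dimensions of the intersection cohomology groups. The only difference is cosmetic: the paper cites Rado for the descent step, while you supply the standard spreading-out argument (finitely generated $\Z$-subalgebra, maximal ideal, finite residue field) in place of the citation; both are fine.
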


\begin{proof}
If $M$ is representable over some field, then it is representable over a finite field \cite[Theorems 4 \& 6]{Rado},
and the corollary follows from Theorem \ref{comb=geom}.
\end{proof}

Let $\cA$ be an arrangement over $\C$.  Theorem \ref{comb=geom} says that we may interpret $P_{M_\cA}(t)$ geometrically
by choosing a representation of $M_\cA$ over a finite field and considering the $\ell$-adic \'etale intersection cohomology 
of the resulting reciprocal plane.  However, one might prefer to think about the topological intersection cohomology groups of 
$X_\cA(\C)$.  Let $P_{X_\cA}(t) = \sum_{i\geq 0} \dim \IH^{2i}(X_\cA(\C); \C)\; t^i$.

\begin{proposition}\label{topological}
If $\cA$ is an arrangement over $\C$, then
the topological intersection cohomology of $X_\cA(\C)$ vanishes in odd degree, and 
$P_{X_\cA}(t) = P_{M_\cA}(t)$.  Furthermore, the topological analogue of Lemma \ref{slice-OT} holds.
\end{proposition}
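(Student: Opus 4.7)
The plan is to split the proposition into its three claims---(i) the topological analogue of Lemma \ref{slice-OT}, (ii) the equality $P_{X_\cA}(t) = P_{M_\cA}(t)$, and (iii) vanishing of $\IH^i(X_\cA(\C); \C)$ for odd $i$---handling (i) directly from Theorem \ref{etale} and reducing (ii) and (iii) to Theorem \ref{comb=geom} via a spreading-out argument.

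For (i), note that the statement and proof of Theorem \ref{etale} make no use of the base field, so the same étale map $\Phi$ is available over $\C$. A morphism of complex algebraic varieties that is étale at a point is a local isomorphism in the analytic topology, and topological intersection cohomology with complex coefficients is local in that topology. Thus $\Phi$ identifies an analytic neighborhood of $x \in X_\cA(\C)$ with an analytic neighborhood of $(0,x) \in X_{\!\cA^F}(\C) \times X_{\cA,F}(\C)$. Since $X_{\cA,F}$ is smooth, the Künneth formula for IC reduces the stalk at $(0,x)$ to the stalk of $\IC_{X_{\!\cA^F}}$ at the cone point, and the contraction lemma for affine cones then identifies this with the global intersection cohomology $\IH^*(X_{\!\cA^F}(\C); \C)$.

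For (ii) and (iii), the plan is to spread out. The arrangement $\cA$ is specified by finitely many vectors $a(i) \in V^*$, so I can choose a finitely generated subring $R \subset \C$ and a model $\cA_R$ of $\cA$ over $R$. The locus in $\Spec R$ where a fiber arrangement has matroid different from $M_\cA$ is Zariski-closed (cut out by vanishing of the relevant minors of the matrix of the $a(i)$), so after inverting a single element of $R$ I may assume every fiber has matroid $M_\cA$. The Nullstellensatz for finitely generated $\Z$-algebras then produces a closed point $s \in \Spec R$ with residue field a finite field $\Fq$; Theorem \ref{comb=geom} applies to $\cA_s$ and gives both odd vanishing and $P_{X_{\cA_s}}(t) = P_{M_\cA}(t)$ in $\ell$-adic étale IC cohomology. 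I would then transfer from the special fiber to $X_\cA(\C)$ using Artin's comparison theorem together with the constancy of IC stalk dimensions in the family $X_{\cA_R} \to \Spec R$.

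The hard part will be this final constancy step: constructibility of $R\pi_* \IC_{X_{\cA_R}}$ on $\Spec R$ only yields upper-semicontinuity of stalk dimensions a priori, so equality across fibers requires more work. A cleaner alternative is to bypass the reduction to characteristic $p$ entirely. The argument of Proposition \ref{purity} uses no arithmetic and applies verbatim to the topological setting, giving the degree bound and vanishing in the top range. Using (i) as the inductive slicing step, I can then set up the recursion of Proposition \ref{arithmetic recursion} directly on the complex side via the stratification $X_\cA(\C) = \bigsqcup X_{\cA,F}(\C)$, replacing the Frobenius--Lefschetz trace calculation with a topological analogue (say, the decomposition theorem applied to the inclusion of the smooth stratum, or a direct spectral-sequence computation along the stratification); uniqueness in Theorem \ref{KL-exists} then delivers (ii) and (iii) simultaneously.
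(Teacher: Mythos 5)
Your part (i) is handled correctly and is indeed the intended argument: Theorem \ref{etale} is characteristic-free, an \'etale morphism of complex varieties is a local analytic isomorphism, and the K\"unneth/contraction-lemma reduction goes through with topological $\IC$ just as in the \'etale setting.

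For parts (ii) and (iii), however, you have essentially rediscovered the paper's route (spread out, pass to a special fiber over $\Fq$, invoke Theorem \ref{comb=geom}, then transfer back) but you stop short because you worry that constructibility of the relative $\IC$ complex only gives semicontinuity of stalk dimensions across fibers, not equality. That worry is real in general, but it is exactly the content of \cite[6.1.9]{BBD} that for a sufficiently generic closed point the comparison goes through: after shrinking $\Spec R$, the graded dimensions of the $\ell$-adic \'etale intersection cohomology of the special fiber agree with those of the topological intersection cohomology of $X_\cA(\C)$, and the same holds for stalks. The paper simply cites this result (see also \cite[1.4.8.1]{Conrad-etale}); your ``hard part'' is a solved problem and there is no remaining gap in that direction.

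The ``cleaner alternative'' you sketch does not actually work as stated. Over a finite field the entire recursion of Theorem \ref{chastity} rests on the Grothendieck--Lefschetz trace formula combined with polynomial point counts, which, together with the super-power-sum lemma and purity, upgrades an Euler-characteristic identity to a graded dimension count by pinning down the Frobenius eigenvalues. Over $\C$ there is no Frobenius, and a spectral-sequence or decomposition-theorem computation along the stratification gives you only the Euler characteristic and no obvious handle on separating odd from even degrees or on degeneration. One could plausibly run a parallel argument using weight filtrations on mixed Hodge modules in place of Frobenius weights, but this is a substitute of comparable depth, not an elementary bypass, and it is not what your sketch describes; ``the decomposition theorem applied to the inclusion of the smooth stratum, or a direct spectral-sequence computation'' does not by itself produce the odd-degree vanishing. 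In short: stick with your first route, but replace the missing constancy step with the BBD comparison theorem, and drop the second route.
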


\begin{proof}
Choose a spreading out of $X_\cA$ and then base change to a finite field $\Fq$ of sufficiently large characteristic.
The fact that the topological intersection cohomology of $X_\cA(\mathbb{C})$
coincides with the graded dimension of the $\ell$-adic \'etale intersection cohomology of $X_\cA(\Fqb)$
after tensoring with $\C$ follows from \cite[6.1.9]{BBD} (see also \cite[1.4.8.1]{Conrad-etale}).
The same goes for local intersection cohomology groups.
\end{proof}

\begin{remark}
For $\cA$ an arrangement over a finite field or $\C$,
the isomorphism class of the variety $X_\cA$ is not determined by the matroid $M_\cA$.
However, Theorem \ref{comb=geom} and Proposition \ref{topological} imply 
that the intersection cohomology Poincar\'e polynomial $P_{X_\cA}(t)$ is determined by $M_\cA$.
\end{remark}

\subsection{Relation to the work of Li and Yong}
Li and Yong \cite{LiYong}
associate to any variety $Y$ over a field $k$ and any closed point $p\in Y$ two polynomials:  
$$P_{p,Y}(t) := \sum_{i\geq 0} \dim \H^{2i-\dim Y}\!(Y; \IC_{Y,p})\, t^i\and
H_{p,Y}(t) := (1-t)^{\dim Y}\!\operatorname{Hilb}(k[TC_pY]; t).$$
If $Y$ is a Schubert variety, then $P_{p,Y}(t)$ is an ordinary Kazhdan-Lusztig polynomial.
If the Schubert variety is covexillary, they prove that $\deg P_{p,Y}(t) = \deg H_{p,Y}(t)$, and that the coefficients of $H_{p,Y}(t)$
are greater than or equal to the corresponding coefficients of $P_{p,Y}(t)$ \cite[1.2]{LiYong}.  They conclude by asking for what
pairs $(p,Y)$ this same statement holds \cite[7.1]{LiYong}.

If $\cA$ is an arrangement over $k=\Fq$ or $\C$, $Y = X_\cA$, and $p\in X_{\cA,F}$, then Lemma \ref{slice-OT}
and Theorem \ref{comb=geom} (if $k=\Fq$) or Proposition \ref{topological} (if $k=\C$) tell us that $$P_{p,Y}(t) = P_{M_{\!\cA^F}}(t).$$  
Furthermore, Lemma \ref{slice-OT}
and \cite[4.3]{Berget}\footnote{The Hilbert series of the Orlik-Terao algebra in characteristic zero
was computed in \cite[1.2]{Terao-OT} and independently in 
\cite[Proposition 7]{PS}; Berget's proof is the first one that works in positive characteristic.}
tell us that $$H_{p,Y}(t) = h^{\operatorname{bc}}_{M_{\!\cA^F}}(t),$$ the $h$-polynomial
of the broken circuit complex of $M_{\!\cA^F}$.

Both properties studied by Li and Yong fail in general for $X_\cA$;
for example, if $M_{\!\cA^F}$ is the uniform matroid
of rank $d$ on a set of cardinality $d+1$, we have
$$h^{\operatorname{bc}}_{M_{\!\cA^F}}(t) = 1 + t + t^2 + \cdots + t^{d-1},$$
while $P_{M_{\!\cA^F}}(t)$ is a polynomial of degree less than $\frac{d}{2}$ with linear coefficient 
equal to $\binom{d+1}{2} - (d+1)$ (Corollary \ref{uniform coefficients}).  It would be interesting to determine
whether there is a nice class of ``covexillary matroids" for which $h^{\operatorname{bc}}_{M}(t)$ dominates $P_M(t)$.

\section{Algebra}\label{sec:algebra}
In this section we define a $q$-deformation of the M\"obius algebra of a matroid, use Kazhdan-Lusztig polynomials
to define a special basis for this algebra, and conjecture that the structure coefficients for this basis are non-negative.
We then verify the conjecture for Boolean matroids, and for uniform matroids and braid matroids of rank at most 3.

\subsection{The deformed M\"obius algebra}
Fix a matroid $M$.  The {\bf M\"obius algebra} is defined to be the free abelian group
$$E(M) := \Z\{\ep_F\mid F\in L(M)\}$$ equipped with the multiplication $\ep_F\cdot \ep_G := \ep_{F\vee G}$.
We define a deformation
$$E_q(M) := \Z[q,q^{-1}]\{\ep_F\mid F\in L(M)\}$$
with multiplication
$$\ep_F\cdot \ep_G \,:=\, \sum_{H\geq I\geq F\vee G} \mu(I,H)\, q^{\crk I}\; \ep_H,$$
where $\crk I := \rk M - \rk I$ is the corank of $I$.
The fact that we recover our original multiplication when $q=1$ follows from the fact 
that $\sum_{H\geq I\geq F\vee G} \mu(I,H) = \delta(H,F\vee G)$.

\begin{proposition}\label{deformation}
The $\Z[q,q^{-1}]$-algebra $E_q(M)$ is commutative, associative, and unital, with unit
equal to 
$$\displaystyle\sum_{F\leq G}\mu(F,G)\, q^{-\crk F}\; \ep_G.$$
\end{proposition}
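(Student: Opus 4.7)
My strategy is to exhibit $E_q(M)$ as isomorphic to an obviously nice algebra, mirroring the classical orthogonal idempotent decomposition of the undeformed M\"obius algebra. Let
$$A := \bigoplus_{F \in L(M)} \Z[q,q^{-1}]\, f_F$$
be the $\Z[q,q^{-1}]$-algebra with coordinatewise multiplication $f_F \cdot f_G = \delta_{F,G}\, f_F$ and unit $\sum_F f_F$; this algebra is manifestly commutative, associative, and unital. I would define the $\Z[q,q^{-1}]$-linear map
$$\varphi : E_q(M) \longrightarrow A, \qquad \varphi(\ep_F) := \sum_{G \geq F} q^{\crk G}\, f_G,$$
and show that it is a ring isomorphism carrying the proposed unit of $E_q(M)$ to $1_A$; commutativity, associativity, and the identification of the unit then transfer from $A$ via $\varphi$.

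Bijectivity of $\varphi$ is immediate: with respect to any linear extension of $\leq$ on $L(M)$, the matrix of $\varphi$ is upper triangular with diagonal entries $q^{\crk F}$, all units in $\Z[q,q^{-1}]$. To verify that $\varphi$ preserves products on basis elements --- which suffices since the given multiplication is extended $\Z[q,q^{-1}]$-bilinearly --- I would compute directly that $\varphi(\ep_F)\,\varphi(\ep_G) = \sum_{H \geq F \vee G} q^{2\crk H}\, f_H$, then expand $\varphi(\ep_F \cdot \ep_G)$ using the defining formula of $E_q(M)$ and collect the coefficient of each $f_K$. The inner M\"obius sum $\sum_{H : I \leq H \leq K} \mu(I,H) = \delta_{I,K}$, already used in Lemma \ref{cancellation}, collapses the triple sum precisely to $q^{2\crk K}$ whenever $K \geq F \vee G$, matching the direct computation.

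For the unit, I would apply $\varphi$ to $\sum_{F \leq G} \mu(F,G)\, q^{-\crk F}\, \ep_G$, swap the order of summation, and read off the coefficient of $f_H$ as
$$\sum_F q^{\crk H - \crk F} \sum_{G : F \leq G \leq H} \mu(F,G).$$
A second application of the same M\"obius identity collapses this to $1$ for every $H$, confirming that the stated element maps to $1_A$ and is therefore the multiplicative unit of $E_q(M)$.

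The argument is essentially bookkeeping once the correct change of basis is identified; the conceptual point is recognizing that the peculiar-looking multiplication $\ep_F \cdot \ep_G = \sum_{H \geq I \geq F \vee G} \mu(I,H)\, q^{\crk I}\, \ep_H$ is engineered precisely so that $\{\varphi^{-1}(f_F)\}$ becomes an orthogonal idempotent basis --- indeed, the formula for $\varphi(\ep_F)$ is forced by this demand, and the factor $q^{\crk I}$ inside the product is exactly what the two M\"obius collapses require. I foresee no genuine obstacle, since both nontrivial steps reduce to a single standard identity on intervals of $L(M)$.
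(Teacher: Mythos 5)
Your proof is correct, and it takes a genuinely different and more structural route than the paper's. The paper verifies associativity by a direct computation: it expands $(\ep_F\cdot\ep_G)\cdot\ep_J$ in terms of $\zeta$ and $\mu$ functions, collapses the inner sum via $\sum_H\mu(I,H)\zeta(H,L)=\delta(I,L)$, and observes that the result is manifestly symmetric in $F,G,J$; the unit is then checked by a similar but separate computation. You instead build an explicit $\Z[q,q^{-1}]$-algebra isomorphism $\varphi:E_q(M)\to A$ onto a product of copies of $\Z[q,q^{-1}]$ with coordinatewise multiplication, so that commutativity, associativity, and unitality all transfer at once from $A$. This is the $q$-deformed analogue of Solomon's classical description of the undeformed M\"obius algebra, which the paper only alludes to in a footnote. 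Both arguments ultimately rest on the same M\"obius identity $\sum_{I\leq H\leq K}\mu(I,H)=\delta(I,K)$, but yours is applied once to verify $\varphi$ is multiplicative, whereas the paper invokes it twice in two separate calculations. What your approach buys is conceptual clarity: it reveals the orthogonal idempotent decomposition of $E_q(M)$ and explains where the formula for the product came from, and it simultaneously establishes that $E_q(M)$ is isomorphic to the trivial deformation as an abstract ring, making it the ``correct'' deformation in the sense of being flat and split. One cosmetic nit: whether the matrix of $\varphi$ is upper or lower triangular depends on your indexing and row/column conventions; what matters, and what you correctly identify, is triangularity with unit diagonal entries.
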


\begin{proof}
Commutativity is immediate from the definition.  For associativity, we note that
\begin{eqnarray*}
\ep_F\cdot \ep_G &=&\sum_{H\geq I\geq F\vee G} \mu(I,H)\, q^{\crk I}\; \ep_H\\\\
&=& \sum_{H,I} \zeta(F,I)\zeta(G,I)\mu(I,H)\, q^{\crk I}\; \ep_H,
\end{eqnarray*}
and therefore
\begin{eqnarray*}
(\ep_F\cdot \ep_G) \cdot \ep_J
&=& \sum_{H,I,K,L} \zeta(F,I)\zeta(G,I)\zeta(H,L)\zeta(J,L)\mu(I,H)\mu(L,K)\, q^{\crk I+\crk L}\; \ep_K\\\\
&=& \sum_{I,K,L}\zeta(F,I)\zeta(G,I)\zeta(J,L)\mu(L,K)\, q^{\crk I+\crk L}\sum_H \mu(I,H)\zeta(H,L)\; \ep_K\\\\
&=& \sum_{I,K,L}\zeta(F,I)\zeta(G,I)\zeta(J,L)\mu(L,K)\, q^{\crk I+\crk L}\delta(I,L)\; \ep_K\\\\
&=& \sum_{I,K}\zeta(F,I)\zeta(G,I)\zeta(J,I)\mu(I,K)\, q^{2\crk I}\;\ep_K.
\end{eqnarray*}
This expression is clearly symmetric in $F$, $G$, and $J$, hence our product is associative.

For the statement about the unit, we observe that
\begin{eqnarray*}
\left(\sum_{F\leq G}\mu(F,G)\, q^{-\crk F}\; \ep_G\right)\cdot \ep_H &=&
\sum_{F\leq G}\mu(F,G)\, q^{-\crk F}\!\!\sum_{I\geq J\geq G\vee H}\mu(J,I)\, q^{\crk J}\;\ep_I\\\\
&=&\sum_{F,G,I,J}\mu(F,G)\zeta(G,J)\zeta(H,J)\mu(J,I)\, q^{\crk J-\crk F}\;\ep_I\\\\
&=&\sum_{F,I,J}\zeta(H,J)\mu(J,I)\, q^{\crk J-\crk F}\left(\sum_G\mu(F,G)\zeta(G,J)\right)\;\ep_I\\\\
&=&\sum_{F,I,J}\zeta(H,J)\mu(J,I)\, q^{\crk J-\crk F}\delta(F,J)\;\ep_I\\\\
&=&\sum_{I,J}\zeta(H,J)\mu(J,I)\;\ep_I\\\\
&=&\sum_{I}\delta(H,I)\;\ep_I\\\\
&=&\ep_H.
\end{eqnarray*}
This completes the proof.
\end{proof}

\subsection{The Kazhdan-Lusztig basis}
We now define a new basis for $E_q(M)$ in terms of the standard basis,
using Kazhdan-Lusztig polynomials to define the matrix coefficients.  The definition is analogous
to that of the Kazhdan-Lusztig basis for the Hecke algebra, and we therefore call our new basis
the {\bf Kazhdan-Lusztig basis}.

For all $F\in L(M)$, let
$$x_F := \sum_{G\geq F} q^{\rk G-\rk F}P_{M_G^F}(q^{-2})\;\ep_G.$$
It is clear that $$x_F\in \ep_F + q\Z[q]\{\ep_G\mid G> F\},$$
and therefore that $\{x_F\mid F\in L(M)\}$ is a $\Z[q,q^{-1}]$-basis for $E_q(M)$.
Even better, it is a $\Z[q]$-basis for the (non-unital) subring $\Z[q]\{\ep_F\mid F\in L(M)\}\subset E_q(M)$.

Consider the structure constants for multiplication in this basis.  That is, for all $F,G,H$,
define $C_{FG}^H(q) \in \Z[q]$ by the equation
$$x_F\cdot x_G = \sum_H C_{FG}^H(q)\; x_H.$$
We conjecture that this polynomial has non-negative coefficients.

\begin{conjecture}\label{structure coefs}
For all $F,G,H\in L(M)$, $C_{FG}^H(q) \in \N[q]$.
\end{conjecture}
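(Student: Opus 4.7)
The plan is to attack Conjecture \ref{structure coefs} by verifying it in all cases where the Kazhdan-Lusztig polynomials are either trivial or explicitly computable, since in the absence of an intrinsic (e.g.\ categorical) definition of the basis $\{x_F\}$ the structure coefficients seem forced to be attacked by direct computation.

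First, for the Boolean matroid I would use Corollary \ref{boolean}: every Kazhdan-Lusztig polynomial is $1$, so $x_F = \sum_{G \geq F} q^{\rk G - \rk F}\,\ep_G$. Expanding $x_F \cdot x_G$ using the defining formula for multiplication in $E_q(M)$ and re-expressing in the basis $\{x_H\}$, the structure coefficients should come out as monomials or simple binomial-style expressions in $q$ with manifestly non-negative coefficients. A useful reduction is that if $E_q(M_1 \oplus M_2) \cong E_q(M_1) \otimes_{\Z[q,q^{-1}]} E_q(M_2)$ compatibly with the KL basis (which should follow from Proposition \ref{product} together with $L(M_1 \oplus M_2) = L(M_1) \times L(M_2)$), then the Boolean case reduces to the rank-one Boolean matroid, where the multiplication table is a finite check.

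For uniform matroids of rank at most $3$ and braid matroids of rank at most $3$, I would combine the explicit Kazhdan-Lusztig polynomials derived from Corollary \ref{uniform coefficients} (and the Stirling-number expressions for braid matroids) with the symmetry of the ambient automorphism group. The group $S_{m+d}$, respectively $S_n$, acts on $E_q(M)$ permuting the basis $\{x_F\}$, so only one product $x_F \cdot x_G$ need be computed per orbit of pairs $(F,G)$; this reduces the task to finitely many products which can be checked by hand in rank at most $2$ and by computer in rank $3$. Expressing each product in the $\ep$-basis first and then inverting the triangular change-of-basis matrix is routine once the $P_{M^F_G}(t)$ are in hand.

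The main obstacle, and the reason a general proof seems remote, is the absence of a categorification. For Coxeter groups, positivity of the structure constants in the Kazhdan-Lusztig basis follows from the existence of Soergel bimodules, or equivalently from perverse sheaves on Bott--Samelson varieties; here no such intrinsic home for the basis is known, and even an intrinsic characterization of $\{x_F\}$ is missing. A natural but apparently hard goal is to deform Baclawski's monoidal category of commuting quiver representations, which categorifies the undeformed M\"obius algebra, into a categorification of $E_q(M)$ in which the $x_F$ correspond to indecomposable objects; positivity would then be automatic. Failing that, a general argument would presumably require a delicate induction that both uses the defining recursion of $P_M(t)$ and tracks cancellations against the M\"obius function, and given the rapid growth of the KL polynomials with rank, extensive computer search for counterexamples should precede any attempt at a general proof.
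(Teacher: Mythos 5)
The statement you are addressing is a conjecture, not a theorem, and the paper does not prove it in general; it is only verified for Boolean matroids (Proposition \ref{Boolean}), uniform matroids of rank at most 3 (Subsection \ref{sec:alg:uni}), and braid matroids of rank at most 3 (Subsection \ref{sec:alg:braid}). More importantly, Conjecture \ref{structure coefs} is now known to be \emph{false}: the addendum (Section \ref{update}) reports that Ben Young's computations found counterexamples in rank 4 and higher, including the uniform matroid of rank 4 on 6 elements, the uniform matroid of rank 6 on 7 elements, and a supersolvable binary matroid. Any plan aimed at a general proof is therefore doomed at the outset; your closing instinct that ``extensive computer search for counterexamples should precede any attempt at a general proof'' was exactly the right one and is in fact what killed the conjecture.

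Within the scope of the cases the paper does verify, your plan mostly retraces the paper's computations, but one step is a genuine improvement. Your claim that $E_q(M_1 \oplus M_2) \cong E_q(M_1)\otimes_{\Z[q,q^{-1}]} E_q(M_2)$ compatibly with the Kazhdan-Lusztig basis (so that $x_{(F_1,F_2)}\mapsto x_{F_1}\otimes x_{F_2}$) is correct: it follows from $L(M_1\oplus M_2)=L(M_1)\times L(M_2)$, the multiplicativity of the M\"obius function and of corank over products, and Proposition \ref{product} applied to $M^{(F_1,F_2)}_{(G_1,G_2)} \cong (M_1)^{F_1}_{G_1}\oplus (M_2)^{F_2}_{G_2}$. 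This reduces the Boolean case to a one-line rank-one computation and yields Proposition \ref{Boolean} immediately, whereas the paper proves it by a more involved direct calculation via Lemmas \ref{venn} and \ref{binomial}. For uniform and braid matroids of rank at most 3, your symmetry-orbit reduction is the same bookkeeping the paper carries out implicitly in its multiplication tables, and your remarks about the missing categorification echo the paper's own caveats in the introduction.
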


\begin{subsection}{Boolean matroids}
In this subsection we will prove Conjecture \ref{structure coefs} for Boolean matroids
by producing an explicit formula for multiplication in the Kazhdan-Lusztig basis.  We first need the following
two lemmas.

\begin{lemma}\label{venn}
Fix subsets $F,G,L\subset [n]$ with $F\cup G\subset L$.  Let $F\Delta G := F\cup G\smallsetminus F\cap G$
be the symmetric difference of $F$ and $G$.  Then
$$\sum_{\substack{H\supset F\\ I\supset G\\ H\cup I=L}}q^{|H|+|I|-|F|-|G|} = (1+q)^{|F\Delta G|}
(2q+q^2)^{|L\smallsetminus F\cup G|}.$$
\end{lemma}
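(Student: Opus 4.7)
The plan is to prove the identity by an element-by-element analysis, showing that the sum factors as a product over $L$ where each element contributes independently.

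First I would rewrite the exponent: since $F \subset H$ and $G \subset I$, we have $|H| + |I| - |F| - |G| = |H \setminus F| + |I \setminus G|$, and this is a sum over elements of $L$ of local contributions. Specifically, each $x \in L$ contributes $[x \in H \setminus F] + [x \in I \setminus G]$ to the exponent, where $[\cdot]$ denotes the Iverson bracket. Moreover, the conditions $F \subset H$, $G \subset I$, $H \cup I = L$ (which also forces $H, I \subset L$) impose local conditions on each element independently. Hence the sum factors:
\[
\sum_{\substack{H\supset F\\ I\supset G\\ H\cup I=L}}q^{|H|+|I|-|F|-|G|} \;=\; \prod_{x\in L}\;\sum_{(\alpha,\beta)}\,q^{\alpha+\beta},
\]
where for each $x$ the inner sum ranges over the admissible pairs $(\alpha,\beta) = ([x \in H \setminus F], [x \in I \setminus G])$.

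Next I would classify the elements of $L$ into four Venn-diagram regions and evaluate each local factor:
\begin{itemize}
\item If $x \in F \cap G$, then $x$ is forced into both $H$ and $I$, so $(\alpha,\beta) = (0,0)$ and the factor is $1$.
\item If $x \in F \setminus G$, then $x \in H$ is forced but $x \in I$ is a free binary choice; $(\alpha,\beta) \in \{(0,0),(0,1)\}$, giving factor $1 + q$. The case $x \in G \setminus F$ is symmetric, giving $1 + q$ as well.
\item If $x \in L \setminus (F \cup G)$, then $x$ is unconstrained except for the requirement $x \in H \cup I$; the three options $x \in H$ only, $x \in I$ only, or $x$ in both give $(\alpha,\beta) \in \{(1,0),(0,1),(1,1)\}$, for a factor of $2q + q^2$.
\end{itemize}
Multiplying these contributions over all $x \in L$ and noting that $|F \cap G|$ elements contribute $1$ each, $|F \Delta G|$ elements contribute $1+q$ each, and $|L \setminus (F \cup G)|$ elements contribute $2q+q^2$ each, yields precisely the right-hand side.

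There is no real obstacle here: the only subtlety is justifying the factorization, i.e., verifying that the constraint $H \cup I = L$ decouples over elements (it does, being a pointwise condition that $x \in H$ or $x \in I$), and checking that $F \cup G \subset L$ is precisely what is needed so that the local constraints at points of $F \cap G$, $F \Delta G$, and $L \setminus (F \cup G)$ are consistent and exhaust all cases.
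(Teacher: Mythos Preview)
Your proof is correct and follows essentially the same idea as the paper's: both exploit the Venn-diagram decomposition of $L$ into $F\cap G$, $F\Delta G$, and $L\setminus(F\cup G)$, and the fact that the constraints and the exponent decouple element by element. The paper phrases this by introducing auxiliary subsets $H',H'',I',I'',J$ and summing via the binomial theorem, whereas you factor the sum directly as a product of local contributions; the content is the same, and your version is arguably a bit cleaner.
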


\begin{proof}
The trick is to write $$H = F\sqcup H'\sqcup H''\sqcup J
\and I = G\sqcup I'\sqcup I''\sqcup J,$$
where
$$H'= F^c\cap G \cap H,\;
H'' = F^c\cap G^c\cap H\cap I^c,\;
I'= F\cap G^c\cap I,\;
I'' = F^c\cap G^c\cap H^c \cap I,\;\text{and}\;
J = F^c\cap G^c\cap H\cap I.$$
Then the left-hand side becomes
$$\sum_{H',H'',I',I'',J}q^{|H'|+|H''|+|I'|+|I''|+2|J|},$$
where the sum is over $H'\subset F^c\cap G$, $I'\subset F\cap G^c$, and $H'',I'',J\subset F^c\cap G^c\cap L$
with $$H''\sqcup I''\sqcup J = F^c\cap G^c\cap L.$$
We have $$\sum_{H',I'}q^{|H'|+|I'|} = (1+q)^{|F\Delta G|},$$
and, for each fixed $J$,
$$\sum_{H'',I''}q^{|H''|+|I''|} 
= (2q)^{|F^c\cap G^c\cap L\cap J^c|}.$$
Thus the left-hand side is equal to
$$(1+q)^{|F\Delta G|}\sum_{J\subset F^c\cap G^c\cap L}q^{2|J|}(2q)^{|F^c\cap G^c\cap L\cap J^c|}
= (1+q)^{|F\Delta G|}(2q+q^2)^{|F^c\cap G^c\cap L|},$$
where the last equality is an application of the binomial theorem.
\end{proof}

\begin{lemma}\label{binomial}
Fix subsets $F\subset G\subset [n]$.
Then for any polynomials $f(q)$ and $g(q)$, we have
$$\sum_{F\subset H \subset G}f(q)^{|G\smallsetminus H|}g(q)^{|H\smallsetminus F|} 
= \Big(f(q) + g(q)\Big)^{|G\smallsetminus F|}.$$
\end{lemma}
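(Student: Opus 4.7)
The plan is to reduce this identity to the ordinary binomial theorem by reparametrizing the sum. First I would observe that intermediate subsets $H$ with $F \subset H \subset G$ are in bijection with subsets $K \subset G \smallsetminus F$, via $K = H \smallsetminus F$ (equivalently $H = F \sqcup K$). Under this bijection, $|H \smallsetminus F| = |K|$ and $|G \smallsetminus H| = |G \smallsetminus F| - |K|$.

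Substituting, the left-hand side becomes
$$\sum_{K \subset G \smallsetminus F} f(q)^{|G \smallsetminus F| - |K|}\, g(q)^{|K|} = \sum_{k=0}^{|G \smallsetminus F|} \binom{|G \smallsetminus F|}{k} f(q)^{|G \smallsetminus F| - k}\, g(q)^{k},$$
where the second equality groups the subsets $K$ by cardinality. The binomial theorem then identifies this with $\bigl(f(q) + g(q)\bigr)^{|G \smallsetminus F|}$, completing the proof.

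There is no genuine obstacle here: once one recognizes the sum as being indexed by subsets of the "free" set $G \smallsetminus F$ with each element contributing either a factor of $f(q)$ (if it lies in $G \smallsetminus H$) or $g(q)$ (if it lies in $H \smallsetminus F$), the identity is just the binomial theorem applied pointwise in the variable $q$. The lemma is presumably recorded here because it will be combined with Lemma \ref{venn} in the subsequent computation of structure constants for Boolean matroids.
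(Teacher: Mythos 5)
Your proof is correct and takes essentially the same approach as the paper, which simply states that the identity is ``a reformulation of the binomial theorem.'' You have just spelled out the reparametrization $H \mapsto H \smallsetminus F$ that the paper leaves implicit.
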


\begin{proof}
This is simply a reformulation of the binomial theorem.
\end{proof}

\begin{proposition}\label{Boolean}
Let $M$ be the Boolean matroid on the ground set $[n]$.
Then for any subsets $F,G\subset [n]$, we have 
$$x_F\cdot x_G = \sum_{K\supset F\cup G} q^{n-|K|} (1+q)^{|K| - |F\cap G|}\, x_K.$$
\end{proposition}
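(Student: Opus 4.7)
The plan is to verify the formula by computing the coefficient of $\varepsilon_L$ on both sides and checking they agree. Since the Boolean matroid is the direct sum of $n$ rank-one matroids, Proposition \ref{product} together with Corollary \ref{boolean} implies $P_{M^F_G}(t)=1$ for all $F\leq G$, and hence the Kazhdan-Lusztig basis element simplifies to
$$x_F \;=\; \sum_{H\supset F} q^{|H|-|F|}\,\varepsilon_H.$$
This is the starting point; everything else is an application of Lemmas \ref{venn} and \ref{binomial}.

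First I would compute the product $\varepsilon_H\cdot\varepsilon_I$ in $E_q(M)$. Using $\mu(J,L)=(-1)^{|L|-|J|}$ and $\operatorname{crk} J=n-|J|$ for the Boolean lattice, the coefficient of $\varepsilon_L$ is $\sum_{H\cup I\subset J\subset L}(-1)^{|L|-|J|}q^{n-|J|}$; substituting $K=L\smallsetminus J$ collapses this to $q^{n-|L|}(1-q)^{|L\smallsetminus(H\cup I)|}$, giving
$$\varepsilon_H\cdot\varepsilon_I \;=\; \sum_{L\supset H\cup I} q^{n-|L|}(1-q)^{|L\smallsetminus(H\cup I)|}\,\varepsilon_L.$$
Now expand $x_F\cdot x_G$ and collect the coefficient of $\varepsilon_L$ (nonzero only when $L\supset F\cup G$). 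After pulling out the constant factor $q^{n-|L|}$, the coefficient becomes
$$\sum_{\substack{H\supset F,\,I\supset G\\ H\cup I\subset L}} q^{|H|+|I|-|F|-|G|}(1-q)^{|L\smallsetminus(H\cup I)|}.$$
Stratify this sum by $L':=H\cup I$ (which must contain $F\cup G$), and apply Lemma \ref{venn} to the inner sum over $(H,I)$ with $H\cup I=L'$. This yields
$$q^{n-|L|}(1+q)^{|F\Delta G|}\sum_{F\cup G\subset L'\subset L}(1-q)^{|L\smallsetminus L'|}(2q+q^2)^{|L'\smallsetminus(F\cup G)|}.$$
By Lemma \ref{binomial} with $f(q)=1-q$ and $g(q)=2q+q^2$ (so that $f+g=1+q+q^2$), the remaining sum equals $(1+q+q^2)^{|L|-|F\cup G|}$, giving the final expression
$$\text{coef of }\varepsilon_L\text{ in }x_F\cdot x_G \;=\; q^{n-|L|}(1+q)^{|F\Delta G|}(1+q+q^2)^{|L|-|F\cup G|}.$$

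Finally I would verify that the right-hand side of the proposition produces the same coefficient. Expanding $x_K=\sum_{L\supset K}q^{|L|-|K|}\varepsilon_L$, the coefficient of $\varepsilon_L$ becomes $\sum_{F\cup G\subset K\subset L}q^{n-|K|}(1+q)^{|K|-|F\cap G|}q^{|L|-|K|}$; pulling out $q^{n+|L|-2|F\cup G|}(1+q)^{|F\cup G|-|F\cap G|}$ and summing the geometric-style sum over $K\smallsetminus(F\cup G)\subset L\smallsetminus(F\cup G)$ (i.e.\ an elementary application of the binomial theorem with ratio $(1+q)/q^2$) produces exactly the same expression. Thus both sides have matching $\varepsilon_L$-coefficients for every $L$, which proves the proposition.

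The computation is routine; the only subtle point is organizing the double sum so that Lemmas \ref{venn} and \ref{binomial} can be applied cleanly. Noticing the identity $(1-q)+(2q+q^2)=1+q+q^2$ is what makes Lemma \ref{binomial} collapse the inner sum to the desired closed form, and this is the key algebraic observation driving the proof.
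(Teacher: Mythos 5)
Your proof is correct and uses the same key ingredients as the paper: the observation that all Kazhdan--Lusztig polynomials of Boolean matroids equal $1$, so that $x_F = \sum_{H\supset F} q^{|H|-|F|}\ep_H$, together with Lemmas \ref{venn} and \ref{binomial}. The only deviation is organizational: rather than converting back to the $x$-basis via M\"obius inversion (which produces a triple sum over $K\supset J\supset L$ and two applications of Lemma \ref{binomial}, as in the paper), you compute the $\ep_L$-coefficient of both sides and check they agree, a mild simplification that needs only one application of Lemma \ref{binomial} on the left and a direct binomial expansion on the right.
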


\begin{proof}
For each $F\subset G$, $M_G^F$ is again Boolean, so $P_{M_G^F}(t) = 1$ by Corollary \ref{boolean}.
This means that $$x_F = \sum_{G\supset F} q^{|G\smallsetminus F|}\; \ep_G,$$
and, by M\"obius inversion,
$$\ep_F = \sum_{G\supset F} (-q)^{|G\smallsetminus F|}\; x_G.$$
We therefore have
\begin{eqnarray*}
x_F\cdot x_G &=& \left(\sum_{H\supset F} q^{|H\smallsetminus F|} \ep_H\right)
\cdot \left(\sum_{I\supset G} q^{|I\smallsetminus G|} \ep_I\right)\\\\
&=& \sum_{\substack{H\supset F\\ I\supset G}}q^{|H|+|I|-|F|-|G|}\;\ep_H\cdot \ep_I\\\\
&=& \sum_{\substack{H\supset F\\ I\supset G}}q^{|H|+|I|-|F|-|G|} 
\sum_{J\supset H\cup I} q^{n-|J|}(1-q)^{|J\smallsetminus H\cup I|}\;\ep_J\\\\
&=& \sum_{\substack{H\supset F\\ I\supset G}}q^{|H|+|I|-|F|-|G|} 
\sum_{J\supset H\cup I} q^{n-|J|}(1-q)^{|J\smallsetminus H\cup I|}
\sum_{K\supset J} (-q)^{|K\smallsetminus J|}\; x_K.
\end{eqnarray*}

By Lemma \ref{venn}, this equation becomes
$$x_F\cdot x_G = \sum_{K\supset J\supset L\supset F\cup G} (1+q)^{|F\Delta G|}
(2q+q^2)^{|L\smallsetminus F\cup G|}q^{n-|J|}(1-q)^{|J\smallsetminus L|}
(-q)^{|K\smallsetminus J|}\; x_K.$$
By writing $n-|J| = n-|K| + |K\smallsetminus J|$, we may rewrite our equation as
$$x_F\cdot x_G = (1+q)^{|F\Delta G|}\sum_{K\supset J\supset L\supset F\cup G}
q^{n-|K|}(2q+q^2)^{|L\smallsetminus F\cup G|}(1-q)^{|J\smallsetminus L|}
(-q^2)^{|K\smallsetminus J|}\; x_K.$$
Applying Lemma \ref{binomial} first to the sum over $J$ and then to the sum over $L$, this becomes
\begin{eqnarray*}
x_F\cdot x_G &=& (1+q)^{|F\Delta G|}\sum_{K\supset L\supset F\cup G}
q^{n-|K|}(2q+q^2)^{|L\smallsetminus F\cup G|}(1-q-q^2)^{|K\smallsetminus L|}\; x_K\\\\
&=& (1+q)^{|F\Delta G|} \sum_{K\supset F\cup G}q^{n-|K|}(1+q)^{|K\smallsetminus F\cup G|}\; x_K\\\\
&=& \sum_{K\supset F\cup G}q^{n-|K|}(1+q)^{|K| - |F\cap G|}\; x_K.
\end{eqnarray*}
This completes the proof.
\end{proof}
\end{subsection}

\subsection{Uniform matroids}\label{sec:alg:uni}
In this subsection we give the multiplication table for $E_q(M)$ in terms of the Kazhdan-Lusztig basis
when $M$ is a uniform matroid of rank at most 3.  The rank 1 case is covered by
Proposition \ref{Boolean} with $n=1$.

\begin{example}
Let $M$ be the uniform matroid of rank 2 on the ground set $[n] = \{1,\ldots,n\}$.
In this case, $P_{M^F_G}(t) = 1$ for all $F\leq G$ (since $\rk M = 2$), and we have
the following multiplication table:
\begin{eqnarray*}
x_{[n]}^2 &=& x_{[n]}\\
x_{[n]}\cdot x_{\{i\}} &=& (1+q)x_{[n]}\\
x_{[n]}\cdot x_{\emptyset} &=& (1+nq+q^2)x_{[n]}\\
x_{\{i\}}^2 &=& qx_{\{i\}} + (1+q)x_{[n]}\\
x_{\{i\}}\cdot x_{\{j\}} &=& (1+q)^2x_{[n]}\qquad (i\neq j)\\
x_{\{i\}}\cdot x_\emptyset &=& q(1+q)x_{\{1\}} + \Big(1+ nq + (n-1)q^2 \Big) x_{[n]}\\
x_\emptyset^2 &=& q^2 x_\emptyset + q(1+q)\textstyle\sum_{i}x_{\{i\}} + \Big(1+nq+(n-1)^2q^2\Big)x_{[n]}.
\end{eqnarray*}
\end{example}

\begin{example}\label{uniform rank 3}
Let $M$ be the uniform matroid of rank 3 on the ground set $[n]$.  In this case, 
$P_{M^F_G}(t) = 1$ for all $F\leq G$ unless $F = \emptyset$ and $G = [n]$,
in which case Corollary \ref{uniform coefficients} tells us that 
$$P_{M^F_G}(t) = P_M(t) = 1 + \Big(\tbinom{n}{2} - n\Big)t.$$
We have the following multiplication table:
\begin{eqnarray*}
x_{[n]}^2 &=& x_{[n]}\\
x_{[n]} \cdot x_{\{i,j\}} &=& (1+q) x_{[n]}\\
x_{[n]} \cdot x_i &=& \Big(1+(n-1)q+q^2\Big) x_{[n]}\\
x_{[n]} \cdot x_{\emptyset} &=& \Big(1 + \tbinom{n}{2}q + \tbinom{n}{2}q^2 + q^3\Big) x_{[n]}\\
x_{\{i,j\}} \cdot x_{\{i,j\}} &=& q x_{\{i,j\}} + (1+q) x_{[n]}\\
x_{\{i,j\}} \cdot x_{\{i,k\}} &=& (1+q)^2 x_{[n]}\qquad (j\neq k)\\
x_{\{i,j\}}\cdot x_{\{k,\ell\}} &=& (1+q)^2 x_{[n]}\qquad (\{i,j\}\cap\{k,\ell\}=\emptyset)\\
x_{\{i,j\}}\cdot x_{\{i\}} &=& q(1+q) x_{\{i,j\}} + \Big(1 + (n-1)q + (n-2)q^2\Big) x_{[n]}\\
x_{\{i,j\}}\cdot x_k &=& (1 + nq + nq^2 + q^3) x_{[n]}\qquad (k\notin\{i,j\})\\
x_{\{i,j\}}\cdot x_{\emptyset} &=& q(1+q)^2 x_{\{i,j\}} + \Big(1 + \tbinom{n}{2}q + (n^2 - n - 3) q^2 
+ \Big(\tbinom{n}{2}-2\Big) q^3\Big) x_{[n]}\\
x_{\{i\}}^2 &=& q^2 x_{\{i\}} + q(1+q) \textstyle\sum_{j\neq i} x_{\{i,j\}} + \Big(1 + (n-1)q + (n-2)^2q^2\Big) x_{[n]}\\
 x_{\{i\}} x_{\{j\}} &=& q(1+q)^2 x_{\{i,j\}} + \Big(1 + (2n-3)q + n(n-2)q^2 + (2n-5)q^3\Big) x_{[n]}\qquad (i\neq j)\\
x_{\{i\}}\cdot x_{\emptyset} &=& q^2(1+q) x_i + q(1+q)^2 \textstyle\sum_{j\neq i} x_{\{i,j\}}\\
&& + \Big(1 + \tbinom{n}{2}q + \tfrac{1}{2}(n-1)(n^2-6)q^2
+ \tfrac{1}{2}(n-1)(n^2-8)q^3 + \tfrac{1}{2}n(n-3)q^4\Big) x_{[n]}\\
x_{\emptyset}^2 &=& q^3 x_{\emptyset} + q^2(1+q)\textstyle\sum_i x_{\{i\}} + q(1+q)^2 \sum_{i<j} x_{\{i,j\}}\\
&& + \Big(1+\tbinom{n}{2}q + \tfrac{1}{4}n(n^3 - 2n^2 - n -2)q^2
+ \tfrac{1}{2}(n-1)(n^3 - n^2 -5n - 2) q^3\\
&&\qquad + \tfrac{1}{4}n^2(n+1)(n-3)q^4 + \tfrac{1}{2}n(n-3)q^5\Big)x_{[n]}.
\end{eqnarray*}
Note that each coefficient is non-negative for all $n\geq 3$, and when $n=3$, 
this multiplication table agrees with Proposition \ref{Boolean}.
\end{example}

\begin{remark} \label{needKLornotpositive}
It is reasonable to ask if Conjecture \ref{structure coefs} would still hold
if we were to redefine the Kazhdan-Lusztig basis by putting
$x_F := \sum_{G\geq F} q^{\rk G-\rk F}\;\ep_G$; that is, if we were to pretend that all
Kazhdan-Lusztig polynomials were equal to 1.  If we did this, 
the linear term of $C_{\emptyset\emptyset}^{[n]}(q)$
in Example \ref{uniform rank 3} would be equal to $2n - \binom{n}{2}$, which is negative when $n>5$.
Thus the Kazhdan-Lusztig polynomials truly play a necessary role in Conjecture \ref{structure coefs}.
\end{remark}


\subsection{Braid matroids}\label{sec:alg:braid}
Finally, we consider braid matroids of small rank.  The braid matroid $M_2$ is isomorphic to the
Boolean matroid of rank 1.  The braid matroid $M_3$ is isomorphic to the uniform matroid of rank 2
on the ground set $[3]$.

\begin{example}\label{braid 4}
Let $M_4$ be the braid arrangement of rank 3.  
The ground set $\cI$ has cardinality $\tbinom{4}{2} = 6$, and $P_{M_4}(t) = 1 + t$ (see appendix).
Flats correspond to set theoretic partitions of $[4]$.  Flats of rank 1 all have cardinality 1,
corresponding to partitions of $[4]$ into one set of cardinality 2 and two singletons.
Flats of rank 2 come in two types:  those of cardinality 2 (partitions of $[4]$ into
two subsets of cardinality 2), and those of cardinality 3 (partitions of $[4]$ into one subset
of cardinality 3 and one singleton).
We omit the full multiplication table, but give the single most interesting product:
\begin{eqnarray*}
x_{\emptyset}^2 &=& q^3 x_{\emptyset} + q^2(1+q)\sum_{|F|=1}x_F
+ q(1+q)^2\sum_{|F|=2}x_F + q(1+3q+4q^2)\sum_{|F|=3}x_F\\\\
&& + \big(q^5 + 13q^4 + 38q^3 + 32q^2 + 7q + 1\big) x_\cI.
\end{eqnarray*}
\end{example}

\subsection{Update}\label{update}
After this paper was published, Ben Young wrote software to compute the polynomials
$C_{FG}^H(q)$ in SAGE.  Using this software, we discovered that 
that Conjecture \ref{structure coefs} is false for a number of examples in rank 4 and higher,
including the uniform matroid of rank 4 on 6 elements, the uniform matroid of rank 6 on 7 elements,
and the supersolvable matroid represented by all vectors in $\mathbb{F}_2^4$ with at least two coordinates
equal to 1.  We were not able to find any braid matroids for which the conjecture fails (we computed all of
the structure coefficients for $M_5$, $M_6$, and $M_7$, and some of the structure coefficients for $M_8$).

In addition, we used this software to identify and correct minor errors
in Examples \ref{uniform rank 3} and \ref{braid 4}.  In Example \ref{uniform rank 3}, we corrected
the coefficient of $q^4$ in $C_{\emptyset\emptyset}^{[n]}(q)$.
In Example \ref{braid 4}, we corrected the coeffiecients of $q$, $q^2$, and $q^3$ in $C_{\emptyset\emptyset}^\cI(q)$.

\appendix
\section{Appendix (with Ben Young)}
We include here computer generated computations of Kazhdan-Lusztig polynomials of uniform matroids and braid
matroids of small rank.  Individual Kazhdan-Lusztig polynomials are to be read vertically; for example, 
Table \ref{1d} tells us that the Kazhdan-Lusztig polynomial
of $M_{1,8}$ is equal to $1+27t+120t^2+84t^3$.

We see some interesting patterns in the tables.  First, we find experimental evidence for Conjecture \ref{log concave}.
Also, with the help of the On-Line Encyclopedia of Integer Sequences \cite{oeis}, we can find formulas for specific coefficients.
For example, we observe
that the leading coefficient of the Kazhdan-Lusztig polynomial of the uniform matroid $M_{1,2k-1}$
is equal to the Catalan number $C_k = \frac{1}{k+1}\binom{2k}{k}$, and the
leading coefficient of the Kazhdan-Lusztig polynomial of the braid matroid $M_{2k}$
is equal to $(2k-3)!!\, (2k-1)^{(k-2)}$.
The former statement, along with a combinatorial description of all coefficients of Kazhdan-Lusztig polynomials
of uniform matroids, is proved in \cite{GPY}.

The sage code which was used to compute these tables is available at \url{https://github.com/benyoung/kl-matroids}.

\excise{
\begin{conjecture}
The leading coefficient of the Kazhdan-Lusztig polynomial of the uniform matroid $M_{1,2k-1}$
is equal to the Catalan number $C_k = \frac{1}{k+1}\binom{2k}{k}$.
\end{conjecture}

\begin{conjecture}
The leading coefficient of the Kazhdan-Lusztig polynomial of the uniform matroid $M_{1,2k}$
is equal to the binomial coefficient $\binom{2k+1}{k-1}$.  This is also equal to the leading
coefficient of the Kazhdan-Lusztig polynomial of $M_{2,2k-1}$.
\end{conjecture}

\begin{conjecture}
The leading coefficient of the Kazhdan-Lusztig polynomial of the braid matroid $M_{2k}$
is equal to\footnote{See sequence number A034941 in the On-Line Encyclopedia
of Integer Sequences.} $(2k-3)!!\, (2k-1)^{(k-2)}$.
\end{conjecture}
}


\newpage
\subsection{Uniform matroids}

\begin{table}[h]\label{1d}
\caption{Kazhdan-Lusztig polynomials for the uniform matroid $M_{1,d}$}
\begin{tabular}{|c||r|r|r|r|r|r|r|r|r|r|r|r|r|r|r|}
\hline
$d=$&1&2&3&4&5&6&7&8&9&10&11&12&13&14&15\\
\hline\hline
1&1&1&1&1&1&1&1&1&1&1&1&1&1&1&1\\
\hline
$t$&&&2&5&9&14&20&27&35&44&54&65&77&90&104\\
\hline
$t^2$&&&&&5&21&56&120&225&385&616&936&1365&1925&2640\\
\hline
$t^3$&&&&&&&14&84&300&825&1925&4004&7644&13650&23100\\
\hline
$t^4$&&&&&&&&&42&330&1485&5005&14014&34398&76440\\
\hline
$t^5$&&&&&&&&&&&132&1287&7007&28028&91728\\
\hline
$t^6$&&&&&&&&&&&&&429&5005&32032\\
\hline
$t^7$&&&&&&&&&&&&&&&1430\\
\hline
\end{tabular}
\end{table}

\begin{table}[h]\label{2d}
\caption{Kazhdan-Lusztig polynomials for the uniform matroid $M_{2,d}$}
\begin{tabular}{|c||r|r|r|r|r|r|r|r|r|r|r|r|r|}
\hline
$d=$&3&4&5&6&7&8&9&10&11&12&13&14&15\\
\hline\hline
1&1&1&1&1&1&1&1&1&1&1&1&1&1\\
\hline
$t$&5&14&28&48&75&110&154&208&273&350&440&544&663\\
\hline
$t^2$&&&21&98&288&675&1375&2541&4368&7098&11025&16500&23936\\
\hline
$t^3$&&&&&84&552&2145&6380&16016&35672&72618&137760&246840\\
\hline
$t^4$&&&&&&&330&2805&13585&49049&146510&382200&899640\\
\hline
$t^5$&&&&&&&&&1287&13442&78078&331968&1150968\\
\hline
$t^6$&&&&&&&&&&&5005&62062&420784\\
\hline
$t^7$&&&&&&&&&&&&&19448\\
\hline
\end{tabular}
\end{table}

\begin{table}[h]\label{3d}
\caption{Kazhdan-Lusztig polynomials for the uniform matroid $M_{3,d}$}
\begin{tabular}{|c||r|r|r|r|r|r|r|r|r|r|r|r|}
\hline
$d=$&3&4&5&6&7&8&9&10&11&12&13&14\\
\hline\hline
1&1&1&1&1&1&1&1&1&1&1&1&1\\
\hline
$t$&9&28&62&117&200&319&483&702&987&1350&1804&2363\\
\hline
$t^2$&&&56&288&927&2365&5214&10374&19110&33138&54720&86768\\
\hline
$t^3$&&&&&300&2145&9020&28886&77714&184730&399840&803760\\
\hline
$t^4$&&&&&&&1485&13585&70499&271635&862680&2384760\\
\hline
$t^5$&&&&&&&&&7007&78078&482118&2171988\\
\hline
$t^6$&&&&&&&&&&&32032&420784\\
\hline
\end{tabular}
\end{table}

\newpage
\subsection{Braid matroids}


\begin{table}[h]\label{3-13}
\caption{Kazhdan-Lusztig polynomials for the braid matroid $M_n$}
\begin{tabular}{|c||r|r|r|r|r|r|r|r|r|r|r|r|r|}
\hline
$n=$&1&2&3&4&5&6&7&8&9&10&11&12&13\\
\hline\hline
1&1&1&1&1&1&1&1&1&1&1&1&1&1\\
\hline
$t$&&&&1&5&16&42&99&219&466&968&1981&4017\\
\hline
$t^2$&&&&&&15&175&1225&6769&32830&147466&632434&2637206\\
\hline
$t^3$&&&&&&&&735&16065&204400&2001230&16813720&128172330\\
\hline
$t^4$&&&&&&&&&&76545&2747745&56143395&864418555\\
\hline
$t^5$&&&&&&&&&&&&13835745&746080335\\
\hline
\end{tabular}
\end{table}

\begin{table}[h]\label{14-17}
\begin{tabular}{|c||r|r|r|r|r|r|r|r|r|r|r|r|r|r|r|}
\hline
$n=$&14&15&16&17\\
\hline\hline
1&1&1&1&1\\
\hline
$t$&8100&16278&32647&65399\\
\hline
$t^2$&10811801&43876001&176981207&711347303\\
\hline
$t^3$&915590676&6252966720&41362602281&267347356003\\
\hline
$t^4$&11200444255&129344350135&1377269949055&13819966094935\\
\hline
$t^5$&22495833360&502627875750&9305666915545&151395489770525\\
\hline
$t^6$&3859590735&293349030975&12290930276625&376566883537845\\
\hline
$t^7$&&&1539272109375&157277996100225\\
\hline
\end{tabular}
\end{table}

\begin{table}[h]\label{18-20}
\begin{tabular}{|c||r|r|r|}
\hline
$n=$&18&19&20\\
\hline\hline
1&1&1&1\\
\hline
$t$&130918&261972&524097\\
\hline
$t^2$&2853229952&11430715476&45762931992\\
\hline
$t^3$&1698735206324&10656703437054&66208557177786\\
\hline
$t^4$&132618161185510&1229703907984734&11100857399288280\\
\hline
$t^5$&2242336712846230&30941776173508200&404180066561961690\\
\hline
$t^6$&9443716601138820&205809448675350520&4042252614171772000\\
\hline
$t^7$&8758018896026400&352844128436870070&11522756204094885750\\
\hline
$t^8$&831766748637825&110176255068905025&7879824460254822075\\
\hline
$t^9$&&&585243816844111425\\
\hline
\end{tabular}
\end{table}

\bibliography{./symplectic}
\bibliographystyle{amsalpha}

\end{document}